\newtheorem{thm}{Theorem}[section]
\newtheorem{lem}[thm]{Lemma}
\newtheorem{defn}[thm]{Definition}
\newtheorem{cor}[thm]{Corollary}
\newtheorem{rem}[thm]{\textit{\textrm{Remark}}}
\newtheorem{notn}[thm]{Notation}
\newtheorem{ex}[thm]{\textit{\textrm{Example}}}
\newtheorem{note}{\textit{\textrm{Note}}}
\numberwithin{equation}{section}
\numberwithin{equation}{section}
\newcommand{\NI}{\noindent}
\newcommand{\ds}{\displaystyle}
\newcommand\HUGE{\@setfontsize\Huge{38}{47}}
\begin{document}
%\begin{document}
\title[Cellularity of a larger class of diagram algebras] { Preprint \\ \vspace{1cm} Cellularity of a larger class of diagram algebras}

 \author{N. Karimilla Bi}
 \maketitle{\small{

\begin{center}
Ramanujan Institute for Advanced Study in Mathematics, \,\\
University  of  Madras,  \\
Chepauk, Chennai -600 005, Tamilnadu, India.\\
%}
{\bf {E-Mail: karimilla.riasm@gmail.com}}
\end{center}}
\begin{abstract}
In this paper, we realize the algebra of $\mathbb{Z}_2$-relations, signed partition algebras and partition algebras as tabular algebras and prove the cellularity of these algebras using the method of \cite{GM1}.
 Using the results of Graham and Lehrer in \cite{GL}, we
give the modular representations of the algebra of $\mathbb{Z}_2$-relations, signed partition algebras and partition algebras.
\end{abstract}

\section{\textbf{INTRODUCTION}}

The study of the algebra of $\mathbb{Z}_2$-relations and signed partition algebras are important because as they are subalgebras of partition algebras which  arose naturally as potts model in statistical mechanics. In this paper, we establish the cellularity of the algebra of $\mathbb{Z}_2$-relations and signed partition algebras and hence deduce the modular representations of these algebras. The algebra of $\mathbb{Z}_2$-relations and signed partition algebras are different from the $\mathbb{Z}_2$-colored partition algebra introduced  in \cite{M} and Tanabe algebras introduced in \cite{T} which are explained in section 3.

\section{\textbf{Preliminaries}}

\begin{defn} \textbf{[\cite{VSS}]} \label{D2.1}

Let the group $\mathbb{Z}_2$ act on the set $X$. Then the action of $\mathbb{Z}_2$ on $X$
can be extended to an action of $\mathbb{Z}_2$ on $R(X), $ where $R(X)$
denote the set of all equivalence relations on $X,$ given by

\centerline{$g . d = \{ (gp, gq) \ | \ (p, q) \in d \}$}

\NI where $d \in R(X)$ and $g \in \mathbb{Z}_2.$ (It is easy to see that the
relation $g . d$ is again an equivalence relation).

An equivalence relation $d$ on $X$ is said to be a $\mathbb{Z}_2$-stable
equivalence relation  if $p \sim q$ in $d$ implies that $gp \sim
gq$ in $d$ for all $g$ in $\mathbb{Z}_2.$  We denote $[k]$ for the set $\{1,
2, \ldots, k\}.$ We shall only consider the case when $\mathbb{Z}_2$ acts
freely on $X$; Let $X  := [k] \times \mathbb{Z}_2 $
 and the action is defined by $g.(i, x) = (i, gx)$ for all $1
\leq i \leq k.$ Let $R_k^{\mathbb{Z}_2}$ be the set of all $\mathbb{Z}_2$-stable
equivalence relations on $X.$
\end{defn}

\begin{notn}\label{N2.2}
$R_k^{\mathbb{Z}_2}$ denotes the set of all $\mathbb{Z}_2$-stable
    equivalence relations on $ \{1,2,\cdots,k\} \times \mathbb{Z}_2.$

    Each element $d \in R_k^{\mathbb{Z}_2}$ can be represented as  a simple graph on a
    row of $2k$ vertices.

    \begin{enumerate}
        \item [(i)] The vertices $(1,e),(1,g),\cdots,(k,e),(k,g)$
        are arranged from left to right in a single row.
        \item [(ii)] If $(i,g) \sim (j,g') \in R_k^{\mathbb{Z}_2}$
        then $(i, g), (j, g')$ is joined by a line $\forall g, g' \in \mathbb{Z}_2.$
    \end{enumerate}

    We say that the two graphs are equivalent if they give rise to
    the same set partition of the $2k$ vertices $\{ (1,e), (1,g), \cdots, (k, e), (k, g)\}.$ We may regard each element $d$ in $R_{2k}^{\mathbb{Z}_2}$ as a
$2k$-partition diagram by arranging the $4k$ vertices $(i, g), i
\in [2k], g \in \mathbb{Z}_2$ of $d$ in two rows in  such a way
that $(i, g)$ is in the top(bottom) row of $d$ if $1 \leq i \leq k
\big(k + 1 \leq i \leq 2k \big) \ \ \forall \ g \in \mathbb{Z}_2$  and put
$(k + i, g) = (i', g), 1 \leq i \leq k,$ for all $g \in
\mathbb{Z}_2$ in the bottom row of $d$ and if $(i, g) \sim (j,
g')$ then $(i, g), (j, g')$ is joined by a line $\forall \ g, g'
\in \mathbb{Z}_2.$

The diagrams $d^+$ and $d^-$ are obtained from the diagram $d$ by
restricting the vertex set to $\{ (1, e), (1, g), \ldots, $

\NI $(k, e),
(k, g)\}$ and $\{ (1', e), (1', g), \ldots, (k', e), (k', g)\}$
respectively.The diagrams $d^+$ and $d^-$ are also
$\mathbb{Z}_2$-stable equivalence relation and  $d^+, d^- \in
R_k^{\mathbb{Z}_2}.$
\end{notn}

\begin{defn} \textbf{[\cite{VSS}]} \label{D2.3}

Let $d \in R_{2k}^{\mathbb{Z}_2}.$ Then the equation

\centerline{$R^d = \{ (i, j) \ | \ \text{ there exists } g, h \in \mathbb{Z}_2
\text{ such that } ((i, g), (j, h)) \in d \}$}

 defines an equivalence relation
on $[2k].$
\end{defn}

\begin{rem} \textbf{[\cite{VSS}]} \label{R2.4}
For $d \in R_{2k}^{\mathbb{Z}_2}$ and for every $\mathbb{Z}_2$-stable equivalence class or a connected component
$C$ in $R^d$ there exists a unique subgroup denoted by $H_C^d$
where
\begin{enumerate}
    \item [(i)] $H_C^d = \{ e \}$ if $(i, e) \nsim (i,
    g) \ \ \ \forall i \in C$ , $C$ is called an $\{e\}$-class or $\{e\}$-component  and the $\{e\}$ component $C$ will always occur as a pair and
    \item [(ii)] $H_C^d = \mathbb{Z}_2$ if  $(i, e) \sim (i, g) \ \ \ \forall i \in
    C,$ $C$ is called $\mathbb{Z}_2$-class or
    $\mathbb{Z}_2$-component and the number of vertices in the $\mathbb{Z}_2$-component $C$ will always be even.
\end{enumerate}
\end{rem}

\begin{defn} \textbf{[\cite{VSS}]} \label{D2.5}

The linear span of $R_{2k}^{\mathbb{Z}_2}$ is a subalgebra of
$\mathbb{P}_{2k }(x)$. We denote this subalgebra by
$A_k^{\mathbb{Z}_2}(x), $ called the \textbf{algebra of
$\mathbb{Z}_2$-relations}.
\end{defn}

\begin{defn} \textbf{[\cite{VSS}]} \label{D2.6}

Let $d$ be a $2k$-partition diagram. A connected component $C$ of
$d$ which contains vertices in both the rows, is called a through class
of $d$ and  $\sharp^p(d)$ denotes the number of through classes of
$d$, called propagating number. Any connected component $C$ of $d$ which contains vertices in only one row (either a top row or bottom row) is called a horizontal edge.

\NI For $0 \leq 2s_1+s_2 \leq 2k, $ define $I_{2s_1+s_2}^{2k}$ to be the set of all
$2k$-partition diagrams such that $\sharp^p(d) = 2s_1+s_2 $ for all $d \in I_{2s_1+s_2}^{2k}.$

\NI i.e., $d$ has $s_1$ number of pairs of $\{e\}$ through classes and $s_2$ number of $\mathbb{Z}_2$ through classes.

\NI Let $I_s$ be
the linear space spanned by $\ds \bigcup_{2s_1+s_2 \leq s} I_{2s_1+s_2}^{2k}.$

\end{defn}

\begin{defn} \textbf{(\cite{P}, Definition 3.1.1)}\label{D2.7}

Let the \textbf{signed partition algebra
$\overrightarrow{A}_k^{\mathbb{Z}_2}(x)$} be the subalgebra of
$\mathbb{P}_{2k}(x)$ generated by

\begin{center}
\vspace{-0.3cm}
\includegraphics{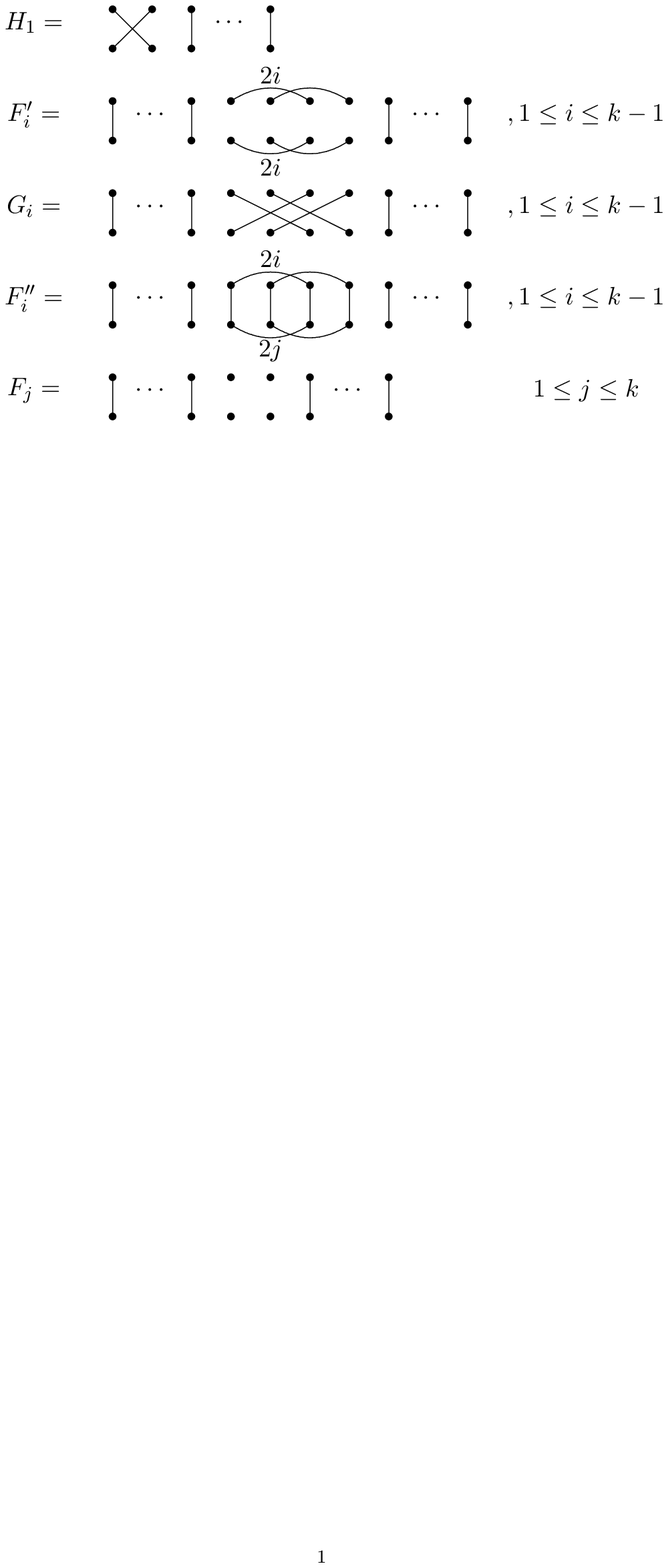}
\end{center}
The subalgebra of the signed partition algebra generated by $F_i', G_i, F''_i, F_j, 1 \leq i \leq k-1, 1 \leq j \leq k$ is isomorphic on to the partition algebra $\mathbb{P}_{2k}(x^2).$
\end{defn}

\begin{defn} \textbf{(\cite{P}, Definition 3.1.1)}\label{D2.8}

Let $d \in R_{2k}^{\mathbb{Z}_2}.$ For $0 \leq r =2s_1+s_2 \leq 2k
- 1, 0 \leq s_1, s_2 \leq k - 1,$

\NI $\widetilde{I}_{2s_1+s_2}^{2k} = \Big\{ d \in I_{2s_1+s_2}^{2k} \ | \ s_1 + s_2 +  H_e(d^{+}) + H_{\mathbb{Z}_2}
(d^{+}) \leq k - 1 \text{ and } \ s_1 + s_2  + H_e(d^{-}) +H_{\mathbb{Z}_2} (d^{-}) \leq k - 1 \Big\}$,

where
\begin{enumerate}
    \item [(i)] $s_1 = \natural \left\{ C : C \text{ is a through class of } R^d \text{ such that } H_C^d = \{ e \} \right\},$
    \item[(ii)] $s_2 = \natural \left\{ C : C \text{ is a through class of } R^d \text{ such that } H_C^d =  \mathbb{Z}_2  \right\},$
        \item[(iii)]$H_e(d^{+}) \left( H_e(d^{-})\right)$ is the number of $\{e\}$ horizontal edges $C$ in the top(bottom) row of $R^d$ such that $H_C^d = \{ e \}$ and $| C |
\geq 2,$
\item[(iv)] $H_{\mathbb{Z}_2}(d^{+}) \left(
H_{\mathbb{Z}_2}(d^{-})\right)$ is the number of $\mathbb{Z}_2$ horizontal edges $C$ in the top(bottom) row of $R^d$ such that $H_C^d =
\mathbb{Z}_2.$
\item[(v)] $\sharp^p (R^d) = s_1+s_2.$
\end{enumerate}
Put, $\widetilde{I}^{2k}_r = \underset{2s_1+s_2 \leq r}{\cup} \widetilde{I}^{2k}_{2s_1+s_2}.$
\end{defn}

\begin{defn} \textbf{(\cite{P}, Definition 3.1.1)} \label{D2.9}

When $s_1 = k, r = 2k, \widetilde{I}^{2k}_{r} = I^{2k}_{2k}.$

Let  $\ds \widetilde{I}_{2k} =  \underset{r=
0}{\overset{2k}\cup} \widetilde{I}_{r}^{2k} .$ The linear span of $\widetilde{I}_{2k}$
is denoted by $\mathscr{H}.$
\end{defn}

\begin{thm} \textbf{(\cite{P}, Theorem 3.1.4 and Theorem 3.1.5)} \label{T2.10}
\begin{enumerate}
\item $\mathscr{H}$ is a finite-dimensional subalgebra of
$A_k^{\mathbb{Z}_2}(x)$ where $\mathscr{H}$ is as in Definition
\ref{D2.9}.
\item The signed partition algebra $\overrightarrow{A}_k^{\mathbb{Z}_2}(x)$ and
$\mathscr{H}$ are equal.
\end{enumerate}
\end{thm}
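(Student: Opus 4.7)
The plan is to prove the two assertions separately.

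For part (1), finite-dimensionality of $\mathscr{H}$ is immediate from the fact that $\widetilde{I}_{2k}$ is a finite set of $\mathbb{Z}_2$-stable equivalence relations, so the substantive claim is that $\mathscr{H}$ is closed under the multiplication of $A_k^{\mathbb{Z}_2}(x)$. I would take two basis diagrams $d_1, d_2 \in \widetilde{I}_{2k}$, form the concatenation $d_1 \cdot d_2 = x^{\ell} e$ in $\mathbb{P}_{2k}(x)$, and show $e \in \widetilde{I}_{2k}$. The key combinatorial point is that components of $e$ are obtained by merging components of $d_1$ and $d_2$ along the identified middle row: a singleton in the top row of $d_1$ has no neighbours in the middle, hence remains a singleton in $e$, while every non-singleton top-row component of $e$ is forced to contain a non-singleton top-row component of $d_1$. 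A direct count of the top row of $R^e$ then gives
\begin{equation*}
s_1^e + s_2^e + H_e(e^{+}) + H_{\mathbb{Z}_2}(e^{+}) \le s_1^{d_1} + s_2^{d_1} + H_e(d_1^{+}) + H_{\mathbb{Z}_2}(d_1^{+}) \le k-1,
\end{equation*}
and symmetrically for the bottom row using $d_2$. The $\mathbb{Z}_2$-stability of $e$ and the $\{e\}$ versus $\mathbb{Z}_2$ classification of components are preserved under merging by Remark \ref{R2.4}, so $e \in \widetilde{I}_{2k}$.

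For part (2), one inclusion $\overrightarrow{A}_k^{\mathbb{Z}_2}(x) \subseteq \mathscr{H}$ follows by direct inspection of the generators $F_j, F_i', F_i'', G_i$ from Definition \ref{D2.7}: each of them, read as a single diagram, satisfies the invariant bounds defining $\widetilde{I}_{2k}$ and hence lies in $\mathscr{H}$. For the reverse inclusion I would argue that an arbitrary $d \in \widetilde{I}_{2k}$ factors as a product $u \cdot p \cdot v$, where $u$ constructs the horizontal blocks in the top row of $d$, $v$ constructs the horizontal blocks in the bottom row, and $p$ encodes the through classes together with any permutation among them. Each of $u$, $p$, $v$ is then assembled from the listed generators: the $F_j$ (and their $G_i$-twists) create $\{e\}$- and $\mathbb{Z}_2$-horizontal blocks, while $F_i'$ and $F_i''$ carry out adjacent transpositions and splice propagating lines. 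The defining inequality $s_1 + s_2 + H_e(d^{\pm}) + H_{\mathbb{Z}_2}(d^{\pm}) \le k-1$ is precisely what guarantees the existence, on each row, of a "spare" vertex where a local generator can be applied without disturbing the portion of $d$ already built, so the inductive decomposition goes through.

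The main obstacle is the spanning half of part (2): producing an explicit, invariant-preserving algorithm that writes a given $d \in \widetilde{I}_{2k}$ as a word in the generators while respecting $\mathbb{Z}_2$-stability, the mandatory pairing of $\{e\}$-classes in Remark \ref{R2.4}, and the mixed $\{e\}$/$\mathbb{Z}_2$ types of horizontal edges and through classes. The closure argument in part (1), by contrast, reduces to a relatively mechanical book-keeping of the invariants $(s_1, s_2, H_e, H_{\mathbb{Z}_2})$ under diagram concatenation.
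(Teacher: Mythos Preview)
The paper does not prove Theorem~\ref{T2.10}: it is stated in the preliminaries with a citation to \cite{P} (Theorems 3.1.4 and 3.1.5 there) and no argument is given, so there is no in-paper proof to compare against. Your outline is the natural strategy and, as far as one can tell, is essentially the route taken in the original source.

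On part~(1), your inequality is correct. A clean way to justify it is to observe that
\[
s_1 + s_2 + H_e(d^{+}) + H_{\mathbb{Z}_2}(d^{+}) \;=\; \bigl|\text{classes of } R^{d^{+}}\bigr| \;-\; \bigl|\{\,i\in[k] : (i,e)\text{ and }(i,g)\text{ are both isolated in }d\,\}\bigr|,
\]
and that under concatenation the first term can only decrease (since $e^{+}$ coarsens $d_1^{+}$) while the second is preserved exactly (an isolated top vertex of $d_1$ has no access to the middle row and hence stays isolated in $e$, and conversely). One case your sketch does not address is the boundary situation $d_1 \in I^{2k}_{2k}$ from Definition~\ref{D2.9}, where the invariant equals $k$ rather than $k-1$; here $d_1$ is a unit acting by relabelling the top row, and the bound for $e^{+}$ must be read off from $d_2$ instead. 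The symmetric boundary case for $d_2$ is handled analogously.

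On part~(2), the inclusion $\overrightarrow{A}_k^{\mathbb{Z}_2}(x) \subseteq \mathscr{H}$ is, as you say, a generator check plus part~(1). For the reverse inclusion your proposal remains a sketch: producing an explicit word in $F_j, F_i', F_i'', G_i$ for an arbitrary $d \in \widetilde{I}_{2k}$, while respecting the $\{e\}/\mathbb{Z}_2$ typing and the mandatory pairing of $\{e\}$-classes from Remark~\ref{R2.4}, is exactly the substantive content of the cited theorems in \cite{P}, and you have identified but not supplied that algorithm.
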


\begin{thm}\textbf{(\cite{P}, Theorem 3.1.7)}\label{T2.11}
\begin{enumerate}
  \item[(i)] The dimension of $A_k^{\mathbb{Z}_2}(x)$ is

  \centerline{$ \sum n_{\lambda} \underset{i=1}{\overset{t}{\prod}} \left( 2^{\lambda_i-1} + 1 \right)$}

  \NI where the sum is over the partition $\lambda = (\lambda_1, \cdots, \lambda_t) \vdash 2k$ and $n_{\lambda}$ is the number of diagrams $d \in \mathbb{P}_k(x)$ such that $\| d \| = \lambda = (\lambda_1, \cdots, \lambda_t)$ be the partition of $2k$, corresponding to the set partition $d,$ where $\lambda_i$ is the cardinality of the equivalence class.
  \item[(ii)]The dimension of the signed partition algebra $\overrightarrow{A}_k^{\mathbb{Z}_2}(x)$ is $$k! \ 2^k + \sum [(2^r-1)/ 2^r]^s \underset{i \geq 1}\prod(2^{\lambda_i-1} + 1)$$ where the sum is over the partition diagrams $d$ in $\mathbb{P}_k(x), \| d \| = \lambda = (\lambda_1, \lambda_2, \cdots, \lambda_t) \rightarrow 2k, \\ r = k - \sharp^p(d), s=0 $ if $|d^+| \neq k$ and $|d^-| \neq k, s=1$ if and only if $|d^+| = k$ or $|d^-| = k$ and $s=2$ if $d \not \in S_k, |d^+| = |d^- | = k.$

\end{enumerate}
\end{thm}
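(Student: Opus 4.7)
For part (i), the strategy is to count $|R_{2k}^{\mathbb{Z}_2}| = \dim A_k^{\mathbb{Z}_2}(x)$ by stratifying according to the induced equivalence relation $R^d$ of Definition \ref{D2.3}. Given a set partition $\Pi$ of $[2k]$ with block sizes $\lambda = (\lambda_1, \ldots, \lambda_t) \vdash 2k$, I would count the $\mathbb{Z}_2$-stable equivalence relations $d$ satisfying $R^d = \Pi$. By Remark \ref{R2.4}, each block $C$ of $\Pi$ of size $\lambda_i$ independently determines a single subgroup $H_C^d \in \{\{e\},\mathbb{Z}_2\}$. If $C$ is a $\mathbb{Z}_2$-component then $C \times \mathbb{Z}_2$ forms a single class of $d$, giving one lift; if $C$ is an $\{e\}$-component then $C \times \mathbb{Z}_2$ splits into a $\mathbb{Z}_2$-conjugate pair of classes each projecting bijectively onto $C$, and such splittings are parametrized by sections $C \to C \times \mathbb{Z}_2$ modulo the swap, giving $2^{\lambda_i - 1}$ lifts. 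The block contribution is therefore $2^{\lambda_i - 1} + 1$, and summing over $\Pi$ with multiplicity $n_\lambda$ produces the formula in (i).

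For part (ii), by Theorem \ref{T2.10} the dimension of $\overrightarrow{A}_k^{\mathbb{Z}_2}(x)$ equals $|\widetilde{I}_{2k}|$. I would split this set into the top stratum $I_{2k}^{2k}$ (attached by Definition \ref{D2.9}) and the lower strata $\widetilde{I}_r^{2k}$ with $r < 2k$. In the top stratum, $s_1 = k$ and $s_2 = 0$ force every through block of $R^d$ to contain exactly one top and one bottom vertex, so $R^d \in S_k$ (contributing $k!$ choices of permutation), and each of the $k$ through blocks must be lifted as an $\{e\}$-pair, giving $2^{2-1} = 2$ lifts apiece, total $2^k$; this yields the $k! \, 2^k$ term. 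For the remaining strata I would fix $D = R^d \in \mathbb{P}_k \setminus S_k$ of type $\lambda$ and count its valid lifts: the total lift count (ignoring constraints) is $\prod_i (2^{\lambda_i - 1} + 1)$ by the analysis of part (i), so the remaining task is to determine what fraction of these lifts satisfy the two inequalities of Definition \ref{D2.8}.

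The crucial step will be translating the top inequality
$$s_1 + s_2 + H_e(d^{+}) + H_{\mathbb{Z}_2}(d^{+}) \leq k - 1$$
into a singleton condition. Because $H_e(d^{+})$ excludes $\{e\}$-singletons while $H_{\mathbb{Z}_2}(d^{+})$ counts every $\mathbb{Z}_2$ horizontal class including singletons, a top-position count shows that the top inequality is automatic unless every top-touching block of $D$ is either a top singleton or a through block with exactly one top vertex---equivalently, unless $|d^+| = k$. When $|d^+| = k$, the top row of $D$ carries precisely $r = k - \sharp^p(D)$ horizontal singletons, and the inequality reduces to requiring at least one of these singletons to be lifted as an $\{e\}$-class; of the $2^r$ possible lifts of those singletons only the all-$\mathbb{Z}_2$ choice is forbidden, and dividing by the contribution $2^r$ inside $\prod_i(2^{\lambda_i-1}+1)$ produces the correction factor $(2^r - 1)/2^r$. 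The identical argument applied independently to $d^-$ yields a second factor, so altogether the correction is $[(2^r - 1)/2^r]^s$ with $s \in \{0,1,2\}$ recording how many of $|d^+| = k$, $|d^-| = k$ hold. The excluded case $d \in S_k$ satisfies $r = 0$ and makes the factor vanish, consistent with the fact that for $D \in S_k$ the constraint always fails and all surviving lifts have already been absorbed into the $k! \, 2^k$ term. Summing over $D$ then delivers formula (ii). The principal obstacle will be the careful block-by-block bookkeeping---separating each block of $D$ into through versus horizontal and singleton versus non-singleton---so that the asymmetric definition of $H_e$ versus $H_{\mathbb{Z}_2}$ isolates exactly the $\mathbb{Z}_2$-singleton count in each row.
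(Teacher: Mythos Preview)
The paper does not supply its own proof of Theorem~\ref{T2.11}; the result is quoted from \cite{P} (Theorem 3.1.7) as background in the preliminaries, with no argument given here. Your proposal therefore cannot be compared against anything in the present paper, but it stands on its own as a correct proof.

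For part (i), your block-by-block lift count is exactly right: each block $C$ of $R^d$ of size $\lambda_i$ either lifts as a single $\mathbb{Z}_2$-class (one choice) or as a $\mathbb{Z}_2$-conjugate pair of sections modulo the swap ($2^{\lambda_i-1}$ choices), and independence over blocks gives the product.

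For part (ii), the identity
\[
s_1+s_2+H_e(d^+)+H_{\mathbb{Z}_2}(d^+)=|(R^d)^+|-(\text{number of top $\{e\}$-singletons})
\]
is the clean way to see that the top inequality is automatic when $|(R^d)^+|<k$ and reduces to ``at least one top singleton is an $\{e\}$-class'' when $|(R^d)^+|=k$, exactly as you describe. Since in the latter case the $r=k-\sharp^p(D)$ top-horizontal singletons are the only blocks on which the constraint depends, and each contributes a factor $2$ to $\prod_i(2^{\lambda_i-1}+1)$, the correction $(2^r-1)/2^r$ is the correct proportion of admissible lifts, and the top and bottom corrections are independent because they involve disjoint blocks. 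Your treatment of the boundary case $D\in S_k$ (where $r=0$ forces the factor to vanish, consistent with those lifts already being absorbed into $I_{2k}^{2k}\cong \mathbb{Z}_2\wr\mathfrak{S}_k$ via Lemma~\ref{L2.13}) is also correct. The bookkeeping you flag as the principal obstacle is indeed the only delicate point, and you have it right.
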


\begin{ex}\label{E2.12}
\begin{enumerate}
  \item[(i)] For $k = 1, 2,  \cdots $, Dimensions of $A_k^{\mathbb{Z}_2}(x)$ are $7, 164, \cdots $
  \item[(ii)] For $k = 1, 2, 3, \cdots $, Dimensions of $\overrightarrow{A}_k^{\mathbb{Z}_2}(x)$ are $3, 85, 5055, \cdots $

\end{enumerate}
\end{ex}

\begin{lem} \label{L2.13}

Let $I_{2k}^{2k}$ be as in Definition \ref{D2.9} then $I^{2k}_{2k}
\simeq \mathbb{Z}_2 \wr \mathfrak{S}_k.$
\end{lem}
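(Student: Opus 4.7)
The plan is to identify $I^{2k}_{2k}$ explicitly with the wreath product $\mathbb{Z}_2\wr\mathfrak{S}_k$ by extracting from each diagram a matching together with a sign vector, and then to verify that diagrammatic composition corresponds to wreath-product multiplication.

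First I would pin down the structure of a diagram $d\in I^{2k}_{2k}$. Because each row of a $2k$-partition diagram contains exactly $2k$ vertices and $\sharp^p(d)=2k$, every through class of $d$ must contain exactly one top vertex and one bottom vertex; hence $d$ has no horizontal edges, and no $\mathbb{Z}_2$-through class can occur, since such a class has at least two vertices in each row. Thus $s_1=k$ and $s_2=0$ in the sense of Definition \ref{D2.6}, and $d$ is a disjoint union of $k$ $\mathbb{Z}_2$-pairs of two-vertex $\{e\}$-through blocks.

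Next I would encode such a diagram. The collapsed relation $R^d$ is a perfect matching between the top columns $[k]$ and the bottom columns $\{1',\dots,k'\}$, and so determines a unique $\sigma\in\mathfrak{S}_k$ with $i\sim_{R^d}\sigma(i)'$. By Remark \ref{R2.4}(i) together with $\mathbb{Z}_2$-stability, the pair of blocks lying over the matched pair $(i,\sigma(i)')$ is either parity-preserving, namely $\{(i,e),(\sigma(i)',e)\}$ and $\{(i,g),(\sigma(i)',g)\}$, or parity-flipping, namely $\{(i,e),(\sigma(i)',g)\}$ and $\{(i,g),(\sigma(i)',e)\}$; I record this binary choice as $\epsilon_i\in\mathbb{Z}_2$. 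Setting $\phi(d)=((\epsilon_1,\dots,\epsilon_k),\sigma)$ gives a set-theoretic bijection $\phi\colon I^{2k}_{2k}\to\mathbb{Z}_2^k\times\mathfrak{S}_k$, and both sides have cardinality $2^kk!$.

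The main step, and the only delicate one, is to show that $\phi$ transports the diagrammatic product to the wreath-product multiplication on $\mathbb{Z}_2\wr\mathfrak{S}_k=\mathbb{Z}_2^k\rtimes\mathfrak{S}_k$. Given $d_1,d_2\in I^{2k}_{2k}$ with $\phi(d_j)=(\epsilon^{(j)},\sigma_j)$, I would trace a top vertex $(i,h)$ through the stacked diagram $d_1d_2$: after passing through $d_1$ it sits in the middle row at column $\sigma_1(i)$ with parity $h\epsilon^{(1)}_i$, and after passing through $d_2$ it exits in the bottom row at column $\sigma_2(\sigma_1(i))$ with parity $h\epsilon^{(1)}_i\epsilon^{(2)}_{\sigma_1(i)}$. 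Because each middle vertex is met exactly once from above and once from below, no closed loops form in the identification, so no power of $x$ is introduced, and $d_1d_2$ again lies in $I^{2k}_{2k}$. Reading off $\phi(d_1d_2)$ from this computation and comparing with the standard wreath-product formula yields the required identity once the action of $\mathfrak{S}_k$ on $\mathbb{Z}_2^k$ is fixed consistently with the diagrammatic encoding; the verification is essentially bookkeeping but is where care is needed. Closure under the product together with the bijection then exhibits $I^{2k}_{2k}$ as a group isomorphic to $\mathbb{Z}_2\wr\mathfrak{S}_k$.
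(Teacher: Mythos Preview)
Your proposal is correct and follows essentially the same approach as the paper: associate to each $d\in I^{2k}_{2k}$ the underlying permutation $\sigma=R^d\in\mathfrak{S}_k$ together with a sign function recording, for each $i$, whether $(i,e)$ is joined to the $e$-copy or the $g$-copy of its image column in the bottom row. The paper's proof stops at defining this map $d\mapsto(f,R^d)$, whereas you additionally argue that $s_2=0$ is forced, that $I^{2k}_{2k}$ is closed under composition with no loops arising, and that the diagrammatic product matches the wreath-product multiplication; this extra bookkeeping is appropriate, since the lemma is asserting a group isomorphism and not merely a bijection of sets.
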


\begin{proof}
Let $d \in I^{2k}_{2k}, $ then $\sharp(d) = 2k$ and $\sharp(R^d) =
k$ and $R^d$ is a permutation in $\mathfrak{S}_k.$

Define,

\centerline{$f(i) = \left\{
                      \begin{array}{ll}
                        \overline{1} , & \hbox{if $(i, e) \sim (i', g) $;} \\
                       \overline{0} , & \hbox{if $(i, e) \sim (i', e)$.}
                      \end{array}
                    \right.
$}

Thus, $d = (f, R^d) \in \mathbb{Z}_2 \wr \mathfrak{S}_k.$
\end{proof}

\begin{thm} \textbf{(\cite{RGA}, Theorem 3.26)} \label{T2.14}

Let $R$ be a commutative ring with unity. Let $\Lambda_{s_1} = \{(\lambda_1, \lambda_2) \ | \ \lambda_1 \vdash k_1, \lambda_2 \vdash k_2, k_1 + k_2 = s_1\}$ and $\Lambda_{s_2} = \{ \mu \ | \ \mu \vdash s_2\}.$ For $(\lambda_1, \lambda_2) \in \Lambda_{s_1}, $ and $\mu \in \Lambda_{s_2}$ define $M^{(\lambda_1, \lambda_2)}$ and $M^{\mu}$ be the set of all standard tableaux of shape $(\lambda_1, \lambda_2)$ and $\mu$
respectively.

\begin{enumerate}
    \item[(i)] The algebra $\mathscr{H} = R[\mathbb{Z}_2 \wr
\mathfrak{S}_{s_1}]$ is a free $R$-module with basis

\centerline{$\mathscr{M} = \left\{ m^{\lambda}_{s_{\lambda},
t_{\lambda} } \ | \ s_{\lambda} \text{ and } t_{\lambda} \text{
are standard tableaux of shape } \lambda \text{ for some
bi-partition } \lambda \text{ of } k \right.$}

 $\left.  \text{ in } M^{(\lambda_1,
\lambda_2)} \text{ and } \lambda = (\lambda_1, \lambda_2) \right\}$

\NI where $m^{\lambda}_{s_{\lambda}, t_{\lambda}}$ is as in Definition 3.14 of \cite{RGA}.

\NI Moreover, $\mathscr{M}$ is a cellular basis for $\mathscr{H}.$
    \item[(ii)] The algebra $\mathscr{H}' = R[
\mathfrak{S}_{s_2}]$ is a free $R$-module with basis

\centerline{$\mathscr{M}' = \big\{ m^{\mu}_{s_{\mu}, t_{\mu} } \ |
\ s_{\mu} \text{ and } t_{\mu} \text{ are standard tableaux of
shape } \mu \text{ for some  partition } \mu \text{ of } k \text{
in } M^{\mu} \big\}$}
\NI where $m^{\mu}_{s_{\mu}, t_{\mu}}$ is as in Definition 3.14 of \cite{RGA}.

\NI Moreover, $\mathscr{M}$ is a cellular basis for $\mathscr{H}'.$

\NI Also, $\mathscr{M}$ is a cellular basis for $\mathscr{H}' \otimes K(x),$ where $K$ is a field.
\end{enumerate}
\end{thm}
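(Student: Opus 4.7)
The plan is to verify that $\mathscr{M}$ (respectively $\mathscr{M}'$) satisfies the three axioms of a cellular algebra in the sense of Graham--Lehrer \cite{GL}, namely: it is an $R$-basis; there is an anti-involution $*$ with $(m^\lambda_{s,t})^* = m^\lambda_{t,s}$; and for every $a\in\mathscr{H}$ one has a triangular multiplication
\[ a\cdot m^\lambda_{s_\lambda,t_\lambda} \equiv \sum_{s'_\lambda} r_a(s'_\lambda,s_\lambda)\, m^\lambda_{s'_\lambda,t_\lambda} \pmod{\mathscr{H}^{\triangleleft\lambda}}, \]
where $\mathscr{H}^{\triangleleft\lambda}$ is the $R$-span of basis elements indexed by (bi-)partitions strictly smaller than $\lambda$ in the dominance order, and $r_a(s'_\lambda,s_\lambda)$ is independent of $t_\lambda$. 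The anti-involution is in both cases the $R$-linear extension of $w\mapsto w^{-1}$.

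Part (ii) is the classical Murphy basis theorem for $R[\mathfrak{S}_{s_2}]$: for each partition $\mu\vdash s_2$, with Young subgroup $\mathfrak{S}_\mu = \mathfrak{S}_{\mu_1}\times\cdots\times\mathfrak{S}_{\mu_\ell}$, one sets $m_\mu=\sum_{w\in\mathfrak{S}_\mu} w$ and then $m^\mu_{s_\mu,t_\mu}=d(s_\mu)^{-1}m_\mu\,d(t_\mu)$, where $d(s_\mu),d(t_\mu)$ are the distinguished coset representatives carrying the initial tableau $t^\mu$ to $s_\mu$ and $t_\mu$ respectively. The dominance order on partitions, together with the classical analysis of how simple transpositions act on $m_\mu$ modulo the span of elements labelled by smaller partitions, gives the triangularity; I would quote this directly from Murphy's work (or Definition~3.14 of \cite{RGA}).

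For part (i), I would use the semidirect decomposition $\mathbb{Z}_2\wr\mathfrak{S}_{s_1}=\mathbb{Z}_2^{s_1}\rtimes\mathfrak{S}_{s_1}$ to build a type-$B$ Murphy basis. A bi-partition $\lambda=(\lambda_1,\lambda_2)$ with $\lambda_i\vdash k_i$ and $k_1+k_2=s_1$ picks out an ordered partition $[s_1]=A_1\sqcup A_2$ with $|A_i|=k_i$; associated to this are the characters $\chi_\lambda=1_{A_1}\otimes\mathrm{sgn}_{A_2}$ on $\mathbb{Z}_2^{s_1}$, and the element
\[ \epsilon_\lambda \;=\; \sum_{g\in\mathbb{Z}_2^{s_1}} \chi_\lambda(g)\,g. \]
Combining $\epsilon_\lambda$ with the Murphy elements $m_{\lambda_1}\in R[\mathfrak{S}_{A_1}]$ and $m_{\lambda_2}\in R[\mathfrak{S}_{A_2}]$ inside the parabolic $\mathfrak{S}_{k_1}\times\mathfrak{S}_{k_2}\hookrightarrow\mathfrak{S}_{s_1}$ yields a ``seed'' $\mathfrak{m}_\lambda=\epsilon_\lambda\,m_{\lambda_1}m_{\lambda_2}$; a pair of standard bi-tableaux $(s_\lambda,t_\lambda)$ then determines distinguished representatives $d(s_\lambda),d(t_\lambda)$ in $\mathfrak{S}_{s_1}$, and one sets $m^\lambda_{s_\lambda,t_\lambda}=d(s_\lambda)^{-1}\mathfrak{m}_\lambda\,d(t_\lambda)$. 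The dominance order on bi-partitions is the natural one: $(\mu_1,\mu_2)\triangleleft(\lambda_1,\lambda_2)$ if either $|\mu_1|\neq|\lambda_1|$ and the position of the break point moves appropriately, or the sizes agree and $\mu_i\triangleleft\lambda_i$ componentwise.

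The main obstacle is the triangularity axiom. It reduces, by writing $a$ as a product of generators, to two separate verifications: (a) how a base generator $g\in\mathbb{Z}_2^{s_1}$ acts on $\epsilon_\lambda$, which either returns $\pm\epsilon_\lambda$ or, after moving past $d(s_\lambda)$, changes the underlying subset $A_1\sqcup A_2$ and thereby produces an element lying in $\mathscr{H}^{\triangleleft\lambda}$; and (b) how a simple reflection in $\mathfrak{S}_{s_1}$ acts on $m_{\lambda_1}m_{\lambda_2}$, which is controlled by the classical Murphy argument applied inside the parabolic together with the fact that an edge crossing $A_1$ and $A_2$ produces a strictly smaller bi-partition. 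Once this is checked on generators, triangularity propagates to arbitrary $a$ and the cellularity axioms are all in place; taking $R=K$ a field (or extending scalars to $K(x)$, as in the final assertion) preserves cellularity, since base change commutes with the construction of $\mathscr{M}$.
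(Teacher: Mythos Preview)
The paper does not give its own proof of Theorem~\ref{T2.14}: the statement is quoted verbatim as Theorem~3.26 of \cite{RGA} and is immediately followed by Theorem~\ref{T2.15} with no intervening proof environment. So there is no ``paper's proof'' to compare your attempt against; the result is simply imported from Dipper--James--Mathas.

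That said, your sketch is a reasonable self-contained outline, but it is \emph{not} the construction of \cite{RGA}. In Definition~3.14 of \cite{RGA} the seed element for a multipartition $\lambda$ is $m_\lambda = u_\lambda^+ x_\lambda$, where $x_\lambda$ is the row-stabiliser sum and $u_\lambda^+$ is a specific product of shifted Jucys--Murphy type factors $(L_k-Q_s)$, not the character-weighted group sum $\epsilon_\lambda=\sum_{g}\chi_\lambda(g)g$ that you write down. Both constructions yield cellular bases of $R[\mathbb{Z}_2\wr\mathfrak{S}_{s_1}]$, but they are genuinely different bases, and the triangularity arguments run along different lines: in \cite{RGA} the key inductive step is a Garnir-type relation combined with the commutation of the $L_k$'s, whereas your route goes through the character theory of $\mathbb{Z}_2^{s_1}$. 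Your description of the dominance order on bi-partitions (``the position of the break point moves appropriately'') is also looser than the explicit definition used in \cite{RGA}. Finally, a small slip: when a base generator $g\in\mathbb{Z}_2^{s_1}$ is pushed past $d(s_\lambda)^{-1}$ it becomes another element of $\mathbb{Z}_2^{s_1}$ (since $\mathfrak{S}_{s_1}$ normalises $\mathbb{Z}_2^{s_1}$), and then acts on $\epsilon_\lambda$ by the scalar $\chi_\lambda(g')=\pm1$; it never ``changes the underlying subset $A_1\sqcup A_2$'' or lands in $\mathscr{H}^{\triangleleft\lambda}$, so case~(a) of your triangularity check is simpler than you suggest.
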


\begin{thm}\label{T2.15}
Let $R \left[\left(\mathbb{Z}_2 \wr \mathfrak{S}_{s_1} \right) \times \mathfrak{S}_{s_2} \right]$ be the $R$-algebra, then by Theorem \ref{T2.14}, $R[\left(\mathbb{Z}_2 \wr \mathfrak{S}_{s_1}\right) \times \mathfrak{S}_{s_2}] \simeq R[\mathbb{Z}_2 \wr \mathfrak{S}_{s_1}] \otimes R[\mathfrak{S}_{s_2}]$ is a cellular algebra with a cell datum $(\Lambda_{s_1, s_2}, M^{((\lambda_1, \lambda_2), \mu)}, C^{((\lambda_1, \lambda_2), \mu)}, \ast)$ given as follows:
\begin{enumerate}
 \item [(i)] $\Lambda_{s_1, s_2}:=\{((\lambda_1, \lambda_2), \mu) \ | \ |\lambda_1| + |\lambda_2| = s_1, \mu \vdash s_2 \} \cup \{ ((\lambda_1, \lambda_2), \Phi) \ | \ |\lambda_1| + |\lambda_2| = s_1\} \cup \{ ((\Phi, \Phi), \mu) \ | \\ \mu \vdash s_2\} \cup \{\Phi\}$ (ordered lexicographically) is a partially ordered set.
     \item [(ii)] $M^{((\lambda_1, \lambda_2), \mu)} := \{ ((s_{\lambda_1}, s_{\lambda_2}), s_{\mu}) \ | \ s_{\lambda_1}, s_{\lambda_2} \text{ and } s_{\mu} \text{ are the standard tableaus of shape } \lambda_1, \lambda_2 \\ \text{ and } \mu  \text{ respectively}\}$ such that

         \centerline{$C^{((\lambda_1, \lambda_2), \mu)} : \underset{\lambda, \mu \in \Lambda}{\coprod} M^{((\lambda_1, \lambda_2), \mu)} \times M^{((\lambda_1, \lambda_2), \mu)} \rightarrow \left(\mathbb{Z}_2 \wr \mathfrak{S}_{s_1}\right) \times \mathfrak{S}_{s_2}$}
         \NI is an injective map with image an $R$ basis of $ \left(\mathbb{Z}_2 \wr \mathfrak{S}_{s_1} \right) \times \mathfrak{S}_{s_2}.$
  \item [(iii)] If $\lambda = (\lambda_1, \lambda_2)$ and $S = ((s_{\lambda_1}, s_{\lambda_2}), s_{\mu}), T = ((t_{\lambda_1}, t_{\lambda_2}), t_{\mu}) \in M^{((\lambda_1, \lambda_2), \mu)}, $ write

     \centerline{$C(S, T) = m^{\lambda}_{s_{\lambda} t_{\lambda}} m^{\mu}_{s_{\mu} t_{\mu}}$}
       \NI where $m^{\lambda}_{s_{\lambda} t_{\lambda}}$ and $ m^{\mu}_{s_{\mu} t_{\mu}}$ are as in Theorem \ref{T2.14}. $\ast $ is the anti-automorphism of

        \NI $(\mathbb{Z}_2 \wr \mathfrak{S}_{s_1}) \times \mathfrak{S}_{s_2}$ such that $((f, \sigma_1), \sigma_2)^{\ast} = ((f, \sigma_1)^{\ast}, \sigma_2^{\ast})= ((f, \sigma_1)^{-1}, \sigma_2^{-1}) \ \ \forall \ ((f, \sigma_1), \sigma_2) \in \left( \mathbb{Z}_2 \wr \mathfrak{S}_{s_1}\right) \times \mathfrak{S}_{s_2}$ such that $\left(C^{((\lambda_1, \lambda_2), \mu)}(S, T)\right)^{\ast} = C^{((\lambda_1, \lambda_2), \mu)}(T, S).$
  \end{enumerate}
\end{thm}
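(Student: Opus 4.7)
The plan is to exploit the natural $R$-algebra isomorphism
$R[(\mathbb{Z}_2 \wr \mathfrak{S}_{s_1}) \times \mathfrak{S}_{s_2}] \cong R[\mathbb{Z}_2 \wr \mathfrak{S}_{s_1}] \otimes_R R[\mathfrak{S}_{s_2}]$, given on generators by $((f,\sigma_1),\sigma_2) \mapsto (f,\sigma_1) \otimes \sigma_2$, and then apply the general principle that the tensor product of two cellular algebras over $R$ is cellular, with cell datum obtained as a product of the individual cell data under lexicographic order. Theorem \ref{T2.14} already supplies cellular structures on both tensor factors, so the whole argument reduces to checking that these structures pass through the tensor product.

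First I would declare the candidate basis map by
$C^{((\lambda_1,\lambda_2),\mu)}(S,T) := m^{\lambda}_{s_{\lambda} t_{\lambda}} \otimes m^{\mu}_{s_{\mu} t_{\mu}}$
for $S = ((s_{\lambda_1},s_{\lambda_2}),s_{\mu})$ and $T = ((t_{\lambda_1},t_{\lambda_2}),t_{\mu})$; injectivity and the basis property claimed in (i) and (ii) follow at once from the corresponding properties of the bases $\mathscr{M}$ and $\mathscr{M}'$ of Theorem \ref{T2.14}, since a tensor product of two $R$-bases is an $R$-basis of the tensor product. The partial order on $\Lambda_{s_1,s_2}$ is the lexicographic order on the Cartesian product of the underlying posets of $\Lambda_{s_1}$ and $\Lambda_{s_2}$, with the empty-partition symbols $\Phi$ taken to be minima in their respective coordinates, so that the four families listed in (i) cover all combinations of (possibly empty) bi-partitions and partitions.

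For the anti-involution I would take $\ast := \ast_1 \otimes \ast_2$, where $\ast_i$ is the inversion anti-involution on the $i$-th factor; this is well-defined on the external direct product and satisfies $((f,\sigma_1),\sigma_2)^{\ast} = ((f,\sigma_1)^{-1},\sigma_2^{-1})$. Because each $\ast_i$ individually sends $m^{\nu}_{u v}$ to $m^{\nu}_{v u}$ by Theorem \ref{T2.14}, the tensor $\ast$ sends $C^{((\lambda_1,\lambda_2),\mu)}(S,T)$ to $C^{((\lambda_1,\lambda_2),\mu)}(T,S)$, giving (iii) modulo the multiplicative cellular axiom.

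The principal step is to verify the multiplicative cellular axiom. Applying the individual cellular expansions of Theorem \ref{T2.14} in each factor to a simple tensor $a_1 \otimes a_2$ acting on $m^{\lambda}_{s_{\lambda} t_{\lambda}} \otimes m^{\mu}_{s_{\mu} t_{\mu}}$ yields
\begin{equation*}
(a_1 \otimes a_2)\bigl(m^{\lambda}_{s_{\lambda} t_{\lambda}} \otimes m^{\mu}_{s_{\mu} t_{\mu}}\bigr) \;\equiv\; \sum_{s'_{\lambda},\, s'_{\mu}} r_{a_1}(s'_{\lambda},s_{\lambda})\, r_{a_2}(s'_{\mu},s_{\mu})\; m^{\lambda}_{s'_{\lambda} t_{\lambda}} \otimes m^{\mu}_{s'_{\mu} t_{\mu}},
\end{equation*}
with the error a sum of tensors in which at least one of the two cell labels is strictly smaller than $\lambda$ or $\mu$ in its own partial order. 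The only point requiring attention, and which I expect to be the main (though mild) obstacle, is checking that every such error term lies in the two-sided ideal spanned by $C^{((\lambda'_1,\lambda'_2),\mu')}(S',T')$ with $((\lambda'_1,\lambda'_2),\mu') < ((\lambda_1,\lambda_2),\mu)$ in the lexicographic order: a term in $A(<\lambda) \otimes B$ has its first coordinate already lex-lower, while a term with first coordinate equal to $\lambda$ and second coordinate in $B(<\mu)$ is lex-lower via the second coordinate. Extending $R$-linearly over arbitrary algebra elements completes the proof.
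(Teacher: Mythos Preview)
Your proposal is correct and is precisely the argument the paper has in mind: in the paper Theorem~\ref{T2.15} is stated without a separate proof, the justification being the phrase ``by Theorem~\ref{T2.14}'' together with the explicit description of the cell datum, i.e.\ the standard tensor-product construction of a cellular structure from the two factors. Your write-up simply spells out the details of that construction (basis, anti-involution, and the lexicographic check of the multiplicative axiom), so there is no genuine difference in approach.
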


\section{\textbf{Differences between the algebras}}

In this section, we illustrate that the algebra of $\mathbb{Z}_2$-relations $A_k^{\mathbb{Z}_2}(x)$ and signed partition algebras $\overrightarrow{A}_k^{\mathbb{Z}_2}(x)$ are different from the $\mathbb{Z}_2$-colored partition algebra $P_k(x;\mathbb{Z}_2)$ introduced  in \cite{M} and Tanabe algebras $T_{k, m}(x)$ introduced in \cite{T}.

\begin{ex} \label{E3.1}
This example clearly illustrates that the signed partition algebras are different from $\mathbb{Z}_2$-colored partition algebra introduced in \cite{M}.

%\hspace{-10cm}
\begin{center}
%\vspace{-6cm}
\includegraphics[height=11cm, width=15cm]{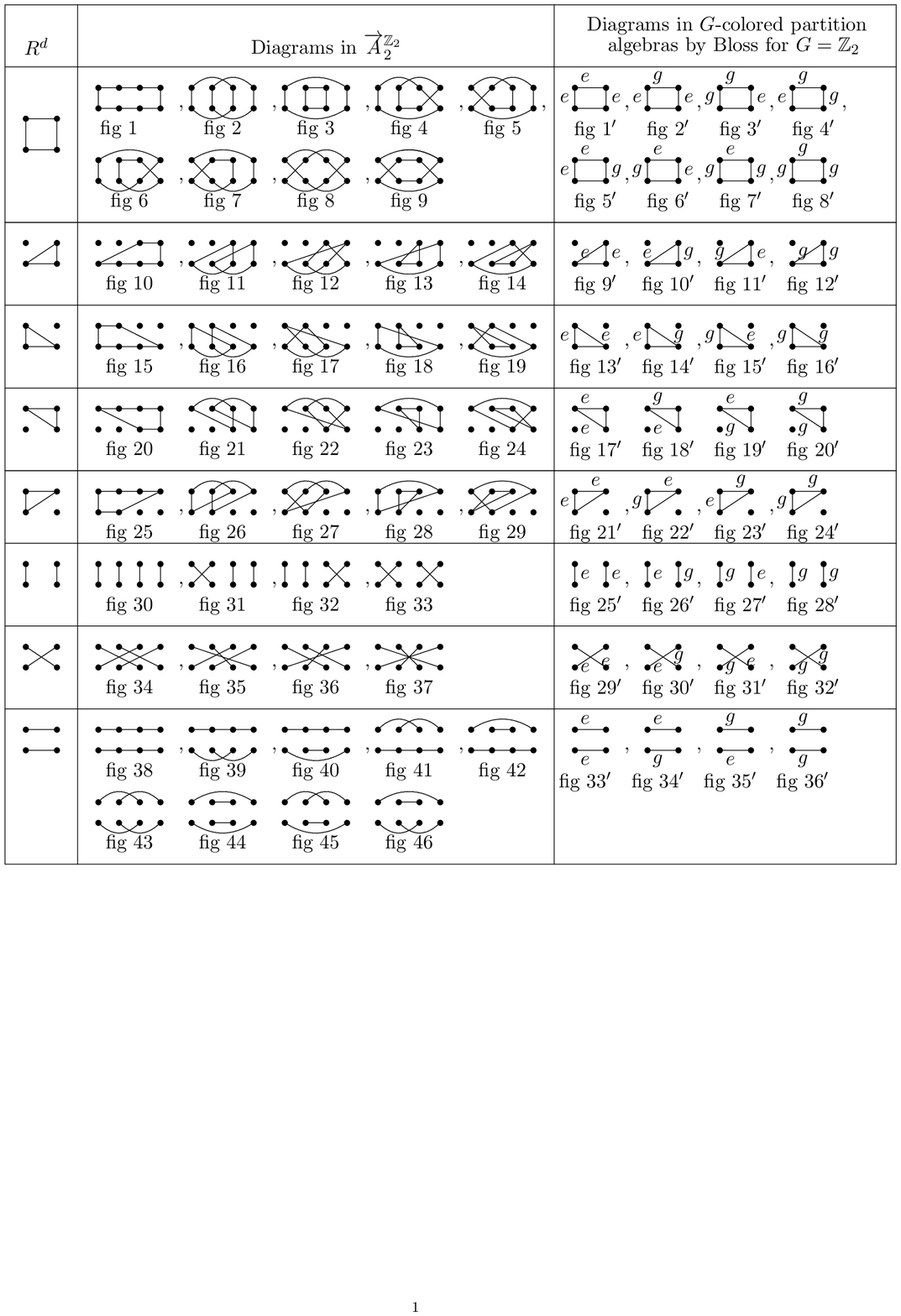}
\end{center}

\begin{center}
\vspace{-1cm}
\includegraphics[height=11cm, width=15.3cm]{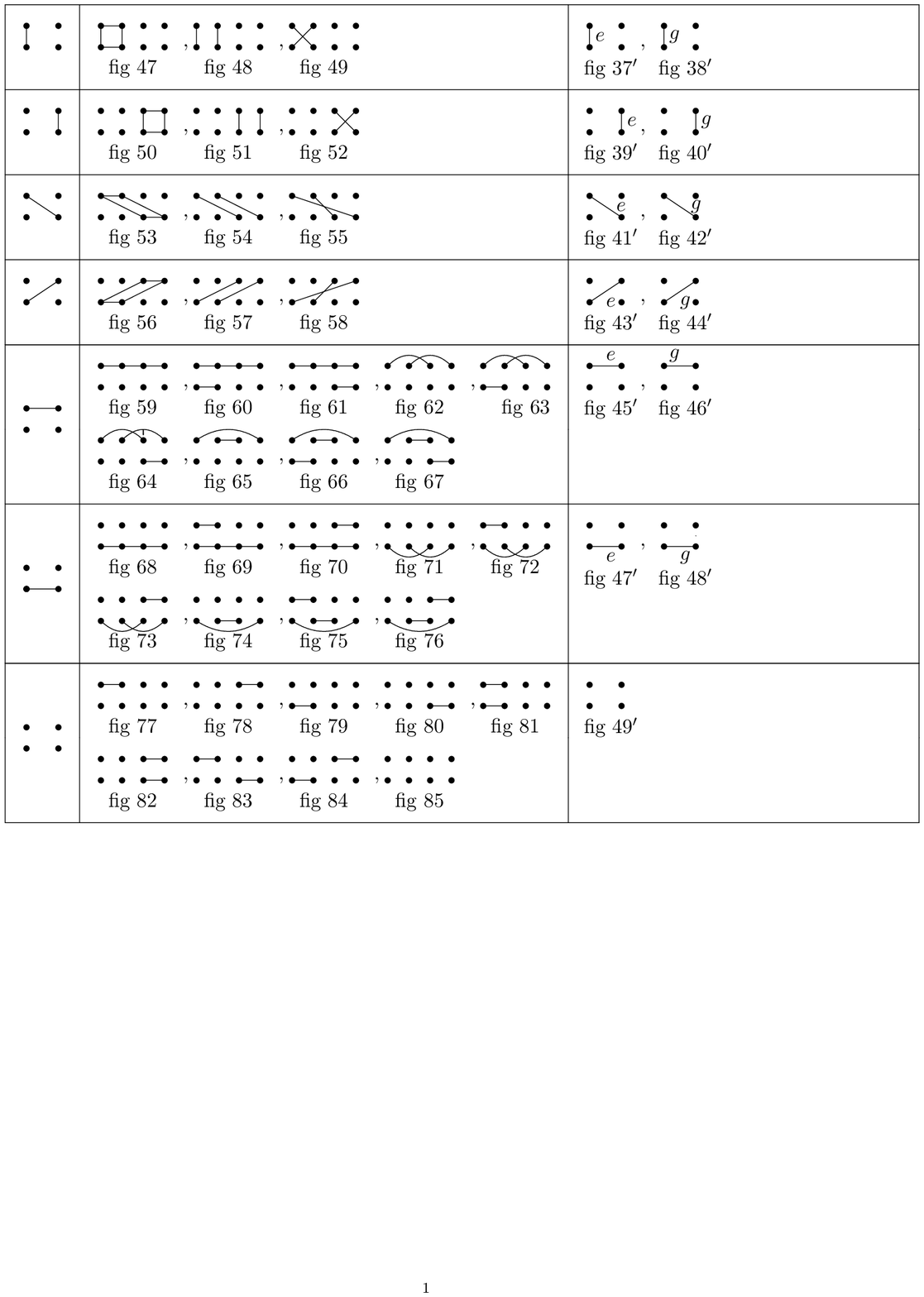}
\end{center}
\end{ex}

\begin{note}\label{N1}
In the algebra of $\mathbb{Z}_2$-relations and signed partition algebras, the set of all diagrams having no horizontal edges and each through class contains two vertices is isomorphic to the hyperoctahedral group of type $B_n$ whereas in Tanabe algebras, the set of all diagrams having no horizontal edges and each through class contains two vertices is isomorphic to the symmetric group.

Thus, the representations of algebra of $\mathbb{Z}_2$ relations and signed partition algebras are determined by the representations of hyperoctahedral group of type $B_n$ whereas  the representations of Tanabe algebras are determined by the representations of symmetric group.
\end{note}

\section{\textbf{The algebra of $\mathbb{Z}_2$ relations and signed partition algebras as Tabular algebras}}

In this section, we realize the algebra of $\mathbb{Z}_2$ relations and signed partition algebras as tabular algebras introduced  in \cite{GM1}.

\begin{notn} \label{N4.1}
Let $d \in I^{2k}_{2s_1+s_2}\left(\widetilde{d} \in \widetilde{I}^{2k}_{2s_1+s_2}\right),$ be as in Definition \ref{D2.5}$\left( \ref{D2.7}\right)$.
\begin{enumerate}
  \item [(i)]The vertex having least integer value in a connected component of $d(\widetilde{d})$ is called the minimal vertex of the connected component.
\item[(ii)] $|d|(|\widetilde{d}|)$ denotes the number of connected components in $d(\widetilde{d}).$
  \end{enumerate}
\end{notn}

\begin{defn} \label{D4.2}

Define,
\begin{enumerate}
  \item[(i)] $M[(r,(s_1, s_2))] = \Big\{(d, P) \ | \ d \in R_k^{\mathbb{Z}_2}, P \in R_{k'}^{\mathbb{Z}_2} \text{ and } d \setminus P \in R_{k-k'}^{\mathbb{Z}_2}, |d| \geq
2s_1 + s_2, P \text{ is a subset of the
}  $

\NI $ \text{  set of all connected components of } d \text{ with } |P| = 2s_1 + s_2 \text{ where } r = 2s_1 + s_2,$

\NI $ P = P_1^e \cup P_1^g \cup \cdots \cup P_{s_1}^e \cup
P_{s_1}^g \cup P_1^{\mathbb{Z}_2} \cup \cdots \cup
P_{s_2}^{\mathbb{Z}_2}$ such that $H^d_{R^{P_i^{\{e\}}}} = \{e\},
1 \leq i \leq s_1,$ \\ $H^d_{R^{P_j^{\mathbb{Z}_2}}} = \mathbb{Z}_2, 1 \leq
j \leq s_2 \Big\}.$

  \item[(ii)] $\widetilde{M}[(r,(s_1, s_2))] = \Big\{(\widetilde{d}, \widetilde{P}) \ | \ \widetilde{d} \in R_k^{\mathbb{Z}_2}, \widetilde{P} \in R_{k'}^{\mathbb{Z}_2} \text{ and } \widetilde{d} \setminus \widetilde{P} \in R_{k-k'}^{\mathbb{Z}_2}, |d| \geq
2s_1 + s_2, \widetilde{P} \text{ is a subset of the
}  $

\NI $ \text{  set of all connected components of } \widetilde{d} \text{ with } |\widetilde{P}| = 2s_1 + s_2 \text{ where } r = 2s_1 + s_2, $

\NI $ \widetilde{P} = \widetilde{P}_1^e \cup \widetilde{P}_1^g \cup \cdots \cup \widetilde{P}_{s_1}^e \cup
\widetilde{P}_{s_1}^g \cup \widetilde{P}_1^{\mathbb{Z}_2} \cup \cdots \cup
\widetilde{P}_{s_2}^{\mathbb{Z}_2}$ such that $H^d_{R^{\widetilde{P}_i^{\{e\}}}} = \{e\},
1 \leq i \leq s_1,$ \\ $H^d_{R^{\widetilde{P}_j^{\mathbb{Z}_2}}} = \mathbb{Z}_2, 1 \leq
j \leq s_2$ and $2r_1(r_2)$ is the number of $\{e\}(\mathbb{Z}_2)$ connected components in $\widetilde{d} \setminus \widetilde{P}, \\ s_1+s_2+r_1+r_2 \leq k -1$ if $s_1+s_2+r_1+r_2 = k$ then $s_1 = k$ or $r_1 \neq 0\Big\}.$
\end{enumerate}

\end{defn}

\NI  We shall now introduce an ordering for the connected components in
$P.$

\NI Suppose that  $P = P_1^e \cup P_1^g \cup \cdots \cup P_{s_1}^e \cup P_{s_1}^g
\cup P_1^{\mathbb{Z}_2} \cup \cdots \cup P_{s_2}^{\mathbb{Z}_2}$
then $R^P = R^{P_1^{\{e\}}} \cup \cdots \cup R^{P_{s_1}^{\{e\}}}
\cup R^{P_1^{\mathbb{Z}_2}} \cup \cdots \cup
R^{P_{s_2}^{\mathbb{Z}_2}}.$

Let $a_{11}, \cdots, a_{1s_1}$ be the minimal vertices of the
connected components $R^{P_1^{\{e\}}}, \cdots,
R^{P_{s_1}^{\{e\}}}$ in $R^P$  and $b_{11}, \cdots, b_{1s_2}$ be
the minimal vertices of the connected components
$R^{P_1^{\mathbb{Z}_2}}, \cdots, R^{P_{s_2}^{\mathbb{Z}_2}}$ in
$R^P$ then

\centerline{ $P_i^e < P_j^e$ \text{ and } $P_i^g < P_j^g$ \text{ if
and only if } $R^{P_i^{\{e\}}} < R^{P_j^{\{e\}}}$ \text{ if and
only if } $a_{1i} < a_{1j} \in R^{P}$ \text{ and }}

\centerline{ $P_l^{\mathbb{Z}_2} < P_f^{\mathbb{Z}_2}$ \text{ if
and only if } $R^{P_l^{\mathbb{Z}_2}} < R^{P_f^{\mathbb{Z}_2}}$
\text{ if and only if } $b_{1l} < b_{1f} \in R^P.$}

\NI Similarly, we can introduce an ordering for the connected components in
$\widetilde{P}$ as in $P.$

\begin{lem}\label{L4.4}
Let $M[(r, (s_1, s_2))]$ and $\widetilde{M}[(r, (s_1, s_2))]$ be as in Definition \ref{D4.2}.
\begin{enumerate}
  \item[(i)] Each $d \in I^{2k}_{2s_1+s_2}$ can be associated with a pair of
elements $(d^+, P),  (d^{-}, Q) \in M[(r, (s_1, s_2))]$ and \\ an element
$((f, \sigma_1), \sigma_2) \in \left(\mathbb{Z}_2 \wr \mathfrak{S}_{s_1}\right)
\times \mathfrak{S}_{s_2}$ where $(d^+, P), (d^-, Q) \in M[(r, (s_1, s_2))]$ and \\ $((f, \sigma_1), \sigma_2) \in (\mathbb{Z}_2 \wr \mathfrak{S}_{s_1}) \times \mathfrak{S}_{s_2}.$
  \item[(ii)]  Each $\widetilde{d} \in \widetilde{I}^{2k}_{2s_1+s_2}$ can be associated with a pair of
elements $(\widetilde{d}^+, \widetilde{P}),  (\widetilde{d}^{-}, \widetilde{Q}) \in \widetilde{M}[(r, (s_1, s_2))]$ and \\ an element
$((\widetilde{f}, \widetilde{\sigma}_1), \widetilde{\sigma}_2) \in \left(\mathbb{Z}_2 \wr \mathfrak{S}_{s_1}\right)
\times \mathfrak{S}_{s_2}$ where $(\widetilde{d}^+, \widetilde{P}), (\widetilde{d}^-, \widetilde{Q})\in \widetilde{M}[(r, (s_1, s_2))]$ and \\ $((\widetilde{f}, \widetilde{\sigma}_1), \widetilde{\sigma}_2)\in (\mathbb{Z}_2 \wr \mathfrak{S}_{s_1}) \times \mathfrak{S}_{s_2}.$
\end{enumerate}

\end{lem}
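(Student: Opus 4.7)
The plan is to construct, for each $d \in I^{2k}_{2s_1+s_2}$, the triple $(d^+, P)$, $(d^-, Q)$, $((f, \sigma_1), \sigma_2)$ canonically from the combinatorics of $d$, and then observe that the same recipe applies verbatim to $\widetilde d$, with the extra step of checking that the output lands in $\widetilde M[(r,(s_1,s_2))]$. First I would define $P$ to be the set of connected components of $d^+$ all of whose vertices occur as endpoints of through classes of $d$. By Remark \ref{R2.4}, the $\{e\}$-components of $d^+$ lying in $P$ arrive in pairs $(C, g \cdot C)$, one with all $e$-vertices and the other with all $g$-vertices. Using the minimal-vertex ordering on $R^P$ introduced just before the statement I label these pairs $(P_1^e, P_1^g), \dots, (P_{s_1}^e, P_{s_1}^g)$, and I label the $\mathbb{Z}_2$-components of $P$ as $P_1^{\mathbb{Z}_2}, \dots, P_{s_2}^{\mathbb{Z}_2}$ in the same fashion; since the $\mathbb{Z}_2$-action fixes the first coordinate, this ordering pairs $P_i^e$ with $P_i^g$ consistently. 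By construction $|P| = 2s_1 + s_2 = r$ and $d^+ \setminus P$ is a $\mathbb{Z}_2$-stable equivalence relation on the remaining vertices, so $(d^+, P) \in M[(r, (s_1, s_2))]$. Applying the same procedure to $d^-$ yields $(d^-, Q) \in M[(r, (s_1, s_2))]$.

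Next I would extract the group element from the through-class structure of $d$. Each pair $(P_i^e, P_i^g)$ of $\{e\}$-components in $P$ is joined by the through classes of $d$ to a unique pair $(Q^e_{\sigma_1(i)}, Q^g_{\sigma_1(i)})$ in $Q$: I set $f(i) = \bar 0$ if $P_i^e$ connects to $Q^e_{\sigma_1(i)}$ and $f(i) = \bar 1$ if it connects to $Q^g_{\sigma_1(i)}$. The assignment $i \mapsto \sigma_1(i)$ is a permutation in $\mathfrak{S}_{s_1}$, exactly as in the proof of Lemma \ref{L2.13}, so $(f, \sigma_1) \in \mathbb{Z}_2 \wr \mathfrak{S}_{s_1}$. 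Each $\mathbb{Z}_2$-through class of $d$ similarly joins a unique $P_i^{\mathbb{Z}_2}$ to a unique $Q_{\sigma_2(i)}^{\mathbb{Z}_2}$, yielding $\sigma_2 \in \mathfrak{S}_{s_2}$, and the triple $((f, \sigma_1), \sigma_2)$ is the required element of $(\mathbb{Z}_2 \wr \mathfrak{S}_{s_1}) \times \mathfrak{S}_{s_2}$.

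For part (ii) I would apply the same construction to $\widetilde d \in \widetilde I^{2k}_{2s_1+s_2}$, obtaining $(\widetilde d^+, \widetilde P)$, $(\widetilde d^-, \widetilde Q)$ and $((\widetilde f, \widetilde\sigma_1), \widetilde\sigma_2)$. The remaining task is to verify that $(\widetilde d^+, \widetilde P)$ and $(\widetilde d^-, \widetilde Q)$ satisfy the extra constraint in Definition \ref{D4.2}(ii). This amounts to translating between the counts $H_e(\widetilde d^\pm), H_{\mathbb{Z}_2}(\widetilde d^\pm)$ in Definition \ref{D2.8} and the parameters $r_1, r_2$ of Definition \ref{D4.2}(ii) (which, by construction, count the $\{e\}$-pair components and $\mathbb{Z}_2$-components of $\widetilde d^\pm \setminus \widetilde P$ and $\widetilde d^\pm \setminus \widetilde Q$, respectively); the bound $\leq k-1$ coming from Definition \ref{D2.8} then produces the required inequality, while the boundary case $s_1 = k$ handled by Definition \ref{D2.9} supplies the exception clause.

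The main obstacle I expect is the bookkeeping in the first two paragraphs: one must verify that the minimal-vertex labelling coherently pairs the $e$- and $g$-halves, and that the through classes then define genuine permutations $\sigma_1$ and $\sigma_2$ rather than only partial matchings. Both points reduce to the $\mathbb{Z}_2$-equivariance in Definition \ref{D2.1} together with the pair structure in Remark \ref{R2.4}, but writing this out carefully is really the substance of the argument.
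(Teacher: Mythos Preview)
Your proposal is correct and follows essentially the same approach as the paper: both restrict the through classes of $d$ to the top and bottom rows to obtain $P$ and $Q$, use the minimal-vertex ordering to label the components, and read off $((f,\sigma_1),\sigma_2)$ from which $P_i^e$ is connected to which $Q_j^{g'}$ (and which $P_l^{\mathbb{Z}_2}$ to which $Q_m^{\mathbb{Z}_2}$). Your treatment of part~(ii) is actually more explicit than the paper's, which simply invokes Definition~\ref{D2.8} and declares the argument identical to~(i); your plan to match $H_e,H_{\mathbb{Z}_2}$ against $r_1,r_2$ is the right idea, though note that $H_e$ in Definition~\ref{D2.8} counts only $\{e\}$-horizontal edges with $|C|\geq 2$, so the translation to $r_1$ requires handling singleton $\{e\}$-components separately---this is precisely where the exception clause ``$r_1\neq 0$'' in Definition~\ref{D4.2}(ii) enters.
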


\begin{proof}
\NI \textbf{Proof of (i):}Let $d \in  I^{2k}_{2s_1+s_2}.$

$d^+, d^-$ are the diagrams obtained from the diagram $d$ by
restricting the vertex set to

$\{(1, e), (1, g), \cdots, (k, e), (k, g)\}$ and $\{(1', e), (1',
g), \cdots, (k', e), (k', g)\}$ respectively.

Identifying $\{(1',e), (1', g), \cdots, (k', e), (k', g)\}$ with
$\{(1, e), (1, g), \cdots, (k, e), (k, g)\}$ by sending

\centerline{$(i', e) \mapsto (i, e)$ \text{ and } $(i', g) \mapsto
(i, g).$}

Thus, $d^+, d^- \in R_k^{\mathbb{Z}_2}.$

Let $S_d $ be the set of all through classes of $d.$ Let $P$
denote the set of all  connected components obtained from $S_d$ by restricting
the vertex set to $\{(1, e), (1, g), \cdots, (k, e), (k, g)\}.$ i.e., $S_d \cap d^+ = P.$

Thus, $|P| = 2s_1 + s_2.$

Similarly, let $Q$ denote the set of all connected components obtained from
$S_d$ by restricting the vertex set to $\{(1', e), (1', g),
\cdots, (k', e), (k', g)\}.$ i.e., $S_d \cap d^- = Q.$

Identify $\{(1', e), (1', g),
\cdots, (k', e), (k', g)\}$ with $\{(1, e), (1, g), \cdots, (k,
e), (k, g)\}$ by sending

\centerline{$(i', e) \mapsto (i, e) \text{ and } (i', g) \mapsto
(i, g)$.}

Thus, $|Q| = 2s_1 + s_2.$

Write

\centerline{ $P = P_1^e \ \cup \ P_1^g \ \cup \ \cdots \ P_{s_1}^e \ \cup \ P_{s_1}^g \
\cup \ P_1^{\mathbb{Z}_2} \ \cup \ \cdots \ P_{s_2}^{\mathbb{Z}_2}$ and }

\centerline{$Q = Q_1^e \ \cup \ Q_1^g \ \cup \ \cdots \ \cup \ Q_{s_1}^e \ \cup \
Q_{s_1}^g \ \cup \ Q_1^{\mathbb{Z}_2} \ \cup \ \cdots \
Q_{s_2}^{\mathbb{Z}_2}$}

Define an element $(f, \sigma_1)$ as follows:

If there is a connected component $X \in S_d$ containing $P_i^e$
and $Q_j^{g'}, g' \in \mathbb{Z}_2$

\NI then, define $\sigma_1(i) = j$ and

\centerline{$f(i) = \left\{%
\begin{array}{ll}
    \overline{1}, & \hbox{ if $g' = g$;} \\
    \overline{0}, & \hbox{if $g' = e.$} \\
\end{array}%
\right.$}

Thus, $(f, \sigma_1) \in \mathbb{Z}_2 \wr \mathfrak{S}_{s_1}.$

Similarly, define $\sigma_2$ as follows:

If there is a connected component $Y \in S_d$ containing
$P_l^{\mathbb{Z}_2}$ and $Q_{m}^{\mathbb{Z}_2}$ then, define $\sigma_2(l) = m.$

Thus, $\sigma_2 \in \mathfrak{S}_{s_2}$ which implies that  $\left((f, \sigma_1), \sigma_2\right) \in \left(\mathbb{Z}_2 \wr \mathfrak{S}_{s_1}\right) \times \mathfrak{S}_{s_2}.$

\NI \textbf{Proof of (ii):} By Definition \ref{D2.8}, $s_1+s_2 + H_e(\widetilde{d}^+) +H_{\mathbb{Z}_2}(\widetilde{d}^+) \leq k-1$ and $s_1+s_2 + H_e(\widetilde{d}^-) +H_{\mathbb{Z}_2}(\widetilde{d}^-) \leq k-1$ and the proof of (ii) is same as proof of (i).
\end{proof}

\begin{lem} \label{L4.5}
\begin{enumerate}
  \item[(i)] For every pair $(d', P), (d'', Q) \in M[(r,(s_1, s_2))]$ and an element \\ $((f, \sigma_1), \sigma_2) \in \left(\mathbb{Z}_2 \wr \mathfrak{S}_{s_1}\right)
\times \mathfrak{S}_{s_2}$ there is a unique diagram $d \in I^{2k}_{2s_1+s_2}
$ where $d^+ = (d', P),  d^- = (d'', Q)$ such that there is a unique connected component of $d$ containing $P_i^e$ and $Q^{g'}_{\sigma_1(i)}$ and $P_{j}^{\mathbb{Z}_2}$ and $Q_{\sigma_2(j)}^{\mathbb{Z}_2}.$
  \item[(ii)] For every pair $(\widetilde{d}', \widetilde{P}), (\widetilde{d}'', \widetilde{Q}) \in \widetilde{M}[(r,(s_1, s_2))]$ and an element  $((\widetilde{f}, \widetilde{\sigma}_1), \widetilde{\sigma}_2) \in \left(\mathbb{Z}_2 \wr \mathfrak{S}_{s_1}\right)
\times \mathfrak{S}_{s_2}$ there is a unique diagram $\widetilde{d} \in \widetilde{I}^{2k}_{2s_1+s_2}
$ where $\widetilde{d}^+ = (\widetilde{d}', \widetilde{P}),  \widetilde{d}^- = (\widetilde{d}'', \widetilde{Q})$ such that there is a unique connected component of $\widetilde{d}$ containing $\widetilde{P}_i^e$ and $\widetilde{Q}^{g'}_{\sigma_1(i)}$ and $\widetilde{P}_{j}^{\mathbb{Z}_2}$ and $\widetilde{Q}_{\sigma_2(j)}^{\mathbb{Z}_2}.$

\end{enumerate}
\end{lem}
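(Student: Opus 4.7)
The plan is to prove both parts by an explicit reconstruction that is dictated by the data, and then to observe that the construction is forced. For part (i), suppose we are given $(d',P),(d'',Q)\in M[(r,(s_1,s_2))]$ and $((f,\sigma_1),\sigma_2)\in(\mathbb{Z}_2\wr\mathfrak{S}_{s_1})\times\mathfrak{S}_{s_2}$. I would first place the $4k$ vertices in two rows according to Notation \ref{N2.2}, identifying the bottom vertices via $(i,e)\mapsto(i',e)$ and $(i,g)\mapsto(i',g)$. Declare the restriction of $d$ to the top row to be $d'$ and to the bottom row to be $d''$; this determines every edge of $d^+$ and $d^-$, with the components in $P$ and $Q$ flagged as the eventual through classes.

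The only remaining freedom is how to glue the flagged components of $P$ to those of $Q$, and this is exactly what $(f,\sigma_1)$ and $\sigma_2$ prescribe. For $1\le i\le s_1$, set $j=\sigma_1(i)$ and add edges merging $P_i^{e}$ with $Q_j^{e}$ and $P_i^{g}$ with $Q_j^{g}$ if $f(i)=\overline 0$, and merging $P_i^{e}$ with $Q_j^{g}$ and $P_i^{g}$ with $Q_j^{e}$ if $f(i)=\overline 1$. For $1\le j\le s_2$, add edges merging $P_j^{\mathbb{Z}_2}$ with $Q_{\sigma_2(j)}^{\mathbb{Z}_2}$ into a single $\mathbb{Z}_2$-component. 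The resulting $d$ is an equivalence relation since each added merger joins two disjoint components, and it is $\mathbb{Z}_2$-stable because in both $f(i)$ cases the involution swapping $(\cdot,e)\leftrightarrow(\cdot,g)$ exchanges the two halves of the $\{e\}$-pairs consistently on the top and bottom rows, while the $\mathbb{Z}_2$-components are preserved setwise. By construction the propagating number is $2s_1+s_2$, so $d\in I^{2k}_{2s_1+s_2}$ and the triple produced from $d$ by Lemma \ref{L4.4} returns the original data.

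For uniqueness, any $d\in I^{2k}_{2s_1+s_2}$ with $d^+=(d',P)$, $d^-=(d'',Q)$ has its through classes determined precisely by which component of $P$ is connected to which component of $Q$. Because the ordering on the components of $P$ and of $Q$ is fixed once and for all by the minimal-vertex rule introduced before Lemma \ref{L4.4}, the triple $((f,\sigma_1),\sigma_2)$ uniquely pins down this matching, so no other diagram can yield the same triple.

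Part (ii) goes through by the same construction with tildes. The only extra point is that, since $(\widetilde d',\widetilde P),(\widetilde d'',\widetilde Q)\in\widetilde M[(r,(s_1,s_2))]$, the parameters $r_1,r_2$ counting the $\{e\}$-pair and $\mathbb{Z}_2$ horizontal classes already satisfy $s_1+s_2+r_1+r_2\le k-1$ (with the stated exceptions), so the horizontal-edge counts of $\widetilde d^{\pm}$ meet the inequalities of Definition \ref{D2.8}, placing the constructed $\widetilde d$ in $\widetilde I^{2k}_{2s_1+s_2}$. The main bookkeeping obstacle is to verify $\mathbb{Z}_2$-stability in the $f(i)=\overline 1$ case and to confirm that no spurious component mergers are introduced beyond the $s_1$ pairs and $s_2$ singletons prescribed; once this is checked, existence and uniqueness follow immediately from the fact that the construction inverts the recipe of Lemma \ref{L4.4}.
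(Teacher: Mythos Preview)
Your proposal is correct and follows essentially the same approach as the paper: place $(d',P)$ above $(d'',Q)$, connect $P_i^e$ to $Q_{\sigma_1(i)}^{g'}$ according to $f(i)$ and $P_j^{\mathbb{Z}_2}$ to $Q_{\sigma_2(j)}^{\mathbb{Z}_2}$, and leave the remaining components as horizontal edges. In fact you are more thorough than the paper, which omits the explicit verification of $\mathbb{Z}_2$-stability, the uniqueness argument, and the check that the $r_1,r_2$ constraints on $\widetilde M[(r,(s_1,s_2))]$ force $\widetilde d\in\widetilde I^{2k}_{2s_1+s_2}$.
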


\begin{proof}
\NI \textbf{Proof of (i):}
Let $P = P_1^e \cup P_1^g \cup \cdots \cup P_{s_1}^e \cup
P_{s_1}^g \cup P_1^{\mathbb{Z}_2} \cup \cdots
P_{s_2}^{\mathbb{Z}_2}$ and $Q = Q_1^e \cup Q_1^g \cup \cdots \cup
Q_{s_1}^e \cup Q_{s_1}^g \cup Q_1^{\mathbb{Z}_2} \cup \cdots \cup
Q_{s_2}^{\mathbb{Z}_2}.$

Let $\{a_{11}^e, \cdots, a_{1s_1}^e\}, \{a_{11}^g, \cdots,
a_{1s_1}^g\}$ and $\{b^e_{11}, \cdots, b^e_{1s_2}\}$ be the minimal
vertices of the connected components $\{R^{P_1^e}, \cdots,
R^{P_{s_1}^e}\}, \{R^{P_1^g}, \cdots, R^{P_{s_1}^g} \}$ and
$\{R^{P_1^{\mathbb{Z}_2}}, \cdots, R^{P_{s_2}^{\mathbb{Z}_2}}\}$
respectively.

Similarly, let $\{l_{11}^e, \cdots, l_{1s_1}^e\}, \{l_{11}^g,
\cdots, l_{1s_1}^g\}$ and $\{f^e_{11}, \cdots, f^e_{1s_2}\}$ be the
minimal vertices of the connected components $\{R^{Q_1^e}, \cdots,
R^{Q_{s_1}^e}\}, \{R^{Q_1^g}, \cdots, R^{Q_{s_1}^g} \}$ and
$\{R^{Q_1^{\mathbb{Z}_2}}, \cdots, R^{Q_{s_2}^{\mathbb{Z}_2}}\}$
respectively.

Let $d \in \widetilde{I}^{2k}_{2s_1+s_2}$ be obtained as follows:

\begin{enumerate}
    \item[(i)] Draw $(d', P)$ above $(d'', Q).$
    \item[(ii)]Connect $P_i^e$ to $Q^{g'}_{\sigma_1(i)}$ if $f(i) = g'.$ Also, connect $P_j^{\mathbb{Z}_2}$ to $Q^{\mathbb{Z}_2}_{\sigma_2(j)}.$
    \item[(iii)] All other connected components in $(d', P)\left((d'', Q)\right)$ other than the connected
components of $P(Q)$ will remain as horizontal edges or isolated
points in the top(bottom) row of $d \in \widetilde{I}^{2k}_{2s_1+s_2}, $ by our construction $d^+ = (d', P)$ and $d^- = (d'', Q).$
\end{enumerate}

\NI \textbf{Proof of (ii):} Proof of (ii) is similar to the proof of (i).
\end{proof}

\begin{rem}\label{R4.6}
By Lemma \ref{L4.4}, any $d \in I_{2k}(\widetilde{d} \in \widetilde{I}_{2k}), $ is denoted by $C^{((f, \sigma_1), \sigma_2)}_{(d^+, P), (d^-, Q)}\left( \widetilde{C}^{((\widetilde{f}, \widetilde{\sigma}_1), \widetilde{\sigma}_2)}_{(\widetilde{d}^+, \widetilde{P}), (\widetilde{d}^-, \widetilde{Q})}\right).$
\end{rem}

\begin{defn}\label{D4.7}
\begin{enumerate}
  \item[(i)] Define a map $\phi^r_{s_1, s_2}: M[(r,(s_1, s_2))] \times M[(r,(s_1, s_2))]
\rightarrow R [\left(\mathbb{Z}_2 \wr \mathfrak{S}_{s_1}\right)
\times \mathfrak{S}_{s_2}]$ as follows:

\centerline{$\phi^r_{s_1, s_2}\left((d', P), (d'', Q)\right) = x^{l(P \vee Q)} ((f, \sigma_1),
    \sigma_2)$ and }

  \item[(ii)] Define a map $\widetilde{\phi}^r_{s_1, s_2}: \widetilde{M}[(r,(s_1, s_2))] \times \widetilde{M}[(r,(s_1, s_2))]
\rightarrow R [\left(\mathbb{Z}_2 \wr \mathfrak{S}_{s_1}\right)
\times \mathfrak{S}_{s_2}]$ as follows:

\centerline{$\widetilde{\phi}^r_{s_1, s_2}\left((\widetilde{d}', \widetilde{P}), (\widetilde{d}'', \widetilde{Q})\right) = x^{l(\widetilde{P} \vee \widetilde{Q})} ((\widetilde{f}, \widetilde{\sigma}_1),
    \widetilde{\sigma}_2)$  }

\end{enumerate}
\NI if
\begin{enumerate}
    \item[(a)] No two connected components of $Q(\widetilde{Q})$ in $d''(\widetilde{d}'')$ have non-empty intersection with a common \\ connected component of $d'(\widetilde{d}')$ in $d'. d''(\widetilde{d}'.\widetilde{d}'')$, or vice versa.
       \item[(b)] No connected component of $Q(\widetilde{Q})$ has non-empty intersection only with the connected components   excluding the connected components of $P(\widetilde{P})$ in $d' . d''(\widetilde{d}'.\widetilde{d}'').$ Similarly, no connected component in $P(\widetilde{P})$ has non-empty intersection only with  a connected component excluding the connected \\ components of $Q(\widetilde{Q})$ in $d'. d''(\widetilde{d}'.\widetilde{d}'').$
        \end{enumerate}
        where $l(P \vee Q)\left(l(\widetilde{P} \vee \widetilde{Q}) \right)$ denotes the number of connected
    components in $d' . d''(\widetilde{d}'.\widetilde{d}'')$ excluding the union of all the
    connected components of $P(\widetilde{P})$ and $Q(\widetilde{Q}).$

    The permutation $\left((f, \sigma_1), \sigma_2 \right)\left(\left((\widetilde{f}, \widetilde{\sigma}_1), \widetilde{\sigma}_2 \right) \right)$ is obtained as
    follows: If there is a unique connected \\ component in $d'. d''(\widetilde{d}'.\widetilde{d}'')$
    containing $P_i^e(\widetilde{P}_i^e)$ and $Q_j^{g'}(\widetilde{Q}_j^{g'})$ then, define $\sigma_1(i) = j(\widetilde{\sigma}_1(i) = j)$ and

    \centerline{$f(i) = \widetilde{f}(i) = \left\{%
\begin{array}{ll}
    \overline{1}, & \hbox{if $g' = g$;} \\
    \overline{0}, & \hbox{if $g' = e$.} \\
\end{array}%
\right.   $}

Also, if there is a unique connected component in $d' . d''(\widetilde{d}'.\widetilde{d}'')$
containing $P_l^{\mathbb{Z}_2}$ and $Q_f^{\mathbb{Z}_2}(\widetilde{P}_l^{\mathbb{Z}_2}$ and $\widetilde{Q}_f^{\mathbb{Z}_2})$ then, define $\sigma_2(l) = f(\widetilde{\sigma}_2(l) = f).$

\NI Otherwise, $\phi^r_{s_1, s_2}\left((d', P), (d'', Q)\right) = 0 \left(\widetilde{\phi}^r_{s_1, s_2}\left((\widetilde{d}', \widetilde{P}), (\widetilde{d}'', \widetilde{Q})\right) = 0 \right).$
\end{defn}

 Since the algebra of $\mathbb{Z}_2$-relations and signed partition algebras are subalgebras of partition algebras the proof of Lemmas \ref{L4.8} and \ref{L4.9} follow as in \cite{X}.

\begin{lem}\label{L4.8}
\begin{enumerate}
  \item[(i)] Let $\mu, \nu \in I^{2k}_{2s_1 + s_2}$ then
$\sharp^p (\mu \nu) \leq 2s_1 +s_2 . \text{  If } \sharp^p(\mu \nu) = 2s_1 + s_2$ then

$$\mu \nu = C^{r_{\mu}[(d, R), (d'', Q)]((f', \sigma'_1), \sigma'_2)}_{((d, R), (d''', T))}$$ where $\mu = C^{((f, \sigma_1), \sigma_2)}_{(d, R),(d', P)}, \nu = C^{((f', \sigma'_1), \sigma'_2)}_{(d'', Q), (d''', T)}, (d, R), (d', P), (d'', Q), (d''', T) \in M[(r,(s_1, s_2))]$, $((f, \sigma_1), \sigma_2), $ $((f', \sigma'_1), \sigma'_2) \in \left( \mathbb{Z}_2 \wr \mathfrak{S}_{s_1} \right) \times \mathfrak{S}_{s_2}$ ,  $r_{\mu}[(d, R), (d'', Q)] = ((f, \sigma_1), \sigma_2) \phi^r_{s_1, s_2}[(d', P), (d'', Q)]$  and \\ $r_{\mu}[(d, R), (d'', Q)]$ is independent of $(d''', T)$ and $((f', \sigma'_1), \sigma'_2).$

  \item[(ii)] Let $\widetilde{\mu}, \widetilde{\nu} \in \widetilde{I}^{2k}_{2s_1 + s_2}$ then
$\sharp^p (\widetilde{\mu} \widetilde{\nu}) \leq 2s_1 +s_2 . \text{  If } \sharp^p(\widetilde{\mu} \widetilde{\nu}) = 2s_1 + s_2$ then

$$\widetilde{\mu} \widetilde{\nu} = C^{r_{\widetilde{\mu}}[(\widetilde{d}, \widetilde{R}), (\widetilde{d}'', \widetilde{Q})]((\widetilde{f}', \widetilde{\sigma}'_1), \widetilde{\sigma}'_2)}_{((\widetilde{d}, \widetilde{R}), (\widetilde{d}''', \widetilde{T}))}$$ where $\widetilde{\mu} = C^{((\widetilde{f}, \widetilde{\sigma}_1), \widetilde{\sigma}_2)}_{(\widetilde{d}, \widetilde{R}),(\widetilde{d}', \widetilde{P})}, \widetilde{\nu} = \widetilde{C}^{((\widetilde{f}', \widetilde{\sigma}'_1), \widetilde{\sigma}'_2)}_{(\widetilde{d}'', \widetilde{Q}), (\widetilde{d}''', \widetilde{T})}, (\widetilde{d}, \widetilde{R}), (\widetilde{d}', \widetilde{P}), (\widetilde{d}'', \widetilde{Q}), (\widetilde{d}''', \widetilde{T}) \in \widetilde{M}[(r,(s_1, s_2))]$, $((\widetilde{f}, \widetilde{\sigma}_1), \widetilde{\sigma}_2), $ $((\widetilde{f}', \widetilde{\sigma}'_1), \widetilde{\sigma}'_2) \in \left( \mathbb{Z}_2 \wr \mathfrak{S}_{s_1} \right) \times \mathfrak{S}_{s_2}$ ,  $r_{\widetilde{\mu}}[(\widetilde{d}, \widetilde{
R}), (\widetilde{d}'', \widetilde{Q})] = ((\widetilde{f}, \widetilde{\sigma}_1), \widetilde{\sigma}_2) \widetilde{\phi}^r_{s_1, s_2}[(\widetilde{d}', \widetilde{P}), (\widetilde{d}'', \widetilde{Q})]$  and \\ $r_{\widetilde{\mu}}[(\widetilde{d}, \widetilde{R}), (\widetilde{d}'', \widetilde{Q})]$ is independent of $(\widetilde{d}''', \widetilde{T})$ and $((\widetilde{f}', \widetilde{\sigma}'_1), \widetilde{\sigma}'_2).$

\end{enumerate}
\end{lem}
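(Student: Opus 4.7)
The plan is to reduce each assertion to Xi's analogous result for the ordinary partition algebra $\mathbb{P}_{2k}(x)$ in \cite{X}, treating $\mathbb{Z}_2$-stability and the $\{e\}$ vs.\ $\mathbb{Z}_2$ component types as additional bookkeeping that the multiplication preserves. Recall that the product $\mu\nu$ in $\mathbb{P}_{2k}(x)$ is computed by stacking the diagram $\mu$ above $\nu$, identifying the bottom row of $\mu$ with the top row of $\nu$ to form a concatenation with $\mu$'s top row, the middle row, and $\nu$'s bottom row, taking the set partition on the surviving top and bottom vertices induced by connectivity, and multiplying by $x^{l}$ where $l$ counts the connected components entirely contained in the middle row. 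Because the generators of both $A_k^{\mathbb{Z}_2}(x)$ and $\overrightarrow{A}_k^{\mathbb{Z}_2}(x)$ lie in $\mathbb{P}_{2k}(x)$ and act $\mathbb{Z}_2$-equivariantly, this product preserves $\mathbb{Z}_2$-stability and keeps us inside the relevant subalgebra.

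First I would establish the inequality $\sharp^p(\mu\nu) \leq 2s_1+s_2$. Any through class $C$ of $\mu\nu$ is the restriction, to the top and bottom rows, of a union of connected components of the concatenation. Such a union must contain at least one component meeting a top vertex, forcing it to lie in a through class of $\mu$, and at least one component meeting a bottom vertex in $\nu$. This yields a surjection from a subset of $\mu$'s $2s_1+s_2$ through classes onto the through classes of $\mu\nu$, giving the bound. When $\sharp^p(\mu\nu) = 2s_1+s_2$ this surjection is a bijection, so each through class of $\mu$ connects through the middle row to a unique through class of $\nu$ without any pair being merged. This is precisely the nonvanishing condition for $\phi^r_{s_1,s_2}\bigl[(d',P),(d'',Q)\bigr]$ in Definition \ref{D4.7}, and that element records exactly the scalar $x^{l(P\vee Q)}$ coming from closed components of $d'\cdot d''$ outside the matched classes together with the matching bijection with sign data between the components of $P$ and $Q$. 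Composing with the upper middle element $((f,\sigma_1),\sigma_2)$ of $\mu$ on the left and the lower middle element $((f',\sigma_1'),\sigma_2')$ of $\nu$ on the right produces the claimed product formula.

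The main technical point will be to verify that the $\mathbb{Z}_2$-type of each through class is preserved under the bijection: two $\{e\}$-pairs cannot fuse into a single $\mathbb{Z}_2$-component without strictly reducing $\sharp^p$, so in the equality case the $\{e\}$-components of $P$ must match bijectively with those of $Q$ (recorded by $\sigma_1$) and the $\mathbb{Z}_2$-components likewise (recorded by $\sigma_2$), with the sign $f(i)\in\mathbb{Z}_2$ determined by whether $P_i^e$ joins $Q_{\sigma_1(i)}^e$ or $Q_{\sigma_1(i)}^g$ in $d'\cdot d''$. Once this is set up, the independence of $r_\mu[(d,R),(d'',Q)]$ from $(d''',T)$ and $((f',\sigma_1'),\sigma_2')$ is immediate: the factor $\phi^r_{s_1,s_2}[(d',P),(d'',Q)]$ involves only the middle-row interaction between $\mu$ and $\nu$, and the top row data $(d,R)$ is carried along unchanged. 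Part (ii) will follow by the identical argument applied in $\overrightarrow{A}_k^{\mathbb{Z}_2}(x)$, since the defining inequalities $s_1+s_2+H_e(\widetilde{d}^{\pm})+H_{\mathbb{Z}_2}(\widetilde{d}^{\pm})\leq k-1$ are preserved when $\sharp^p$ is maximal: no new horizontal edges can be created in the top or bottom rows from through classes without lowering $\sharp^p$, so $\widetilde{\mu}\widetilde{\nu}$ lies in $\widetilde{I}^{2k}_{2s_1+s_2}$ whenever the propagating count is preserved.
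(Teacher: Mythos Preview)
Your proposal is correct and follows essentially the same approach as the paper, which simply states that the result follows from the definition of multiplication in the partition algebra, Definition~\ref{D4.7}, and Lemma~4.4 of \cite{X}. You have spelled out in considerably more detail the mechanics that the paper leaves implicit---the surjection argument for the propagating-number bound, the bijection in the equality case matching conditions (a) and (b) of Definition~\ref{D4.7}, the type-preservation of $\{e\}$- versus $\mathbb{Z}_2$-components, and the preservation of the $\widetilde{I}^{2k}_{2s_1+s_2}$ constraints in part (ii)---but the underlying strategy of reducing to Xi's result for the ordinary partition algebra with the $\mathbb{Z}_2$-stability carried along as extra bookkeeping is identical.
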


\begin{proof}

\NI If  $\sharp^p(\mu \nu) = 2s_1 + s_2,$ then the proof of (i) and (ii) follows from the definition of multiplication of partition algebras and Definition \ref{D4.7} and from Lemma 4.4 of \cite{X}.

\end{proof}

\begin{lem}\label{L4.9}
\begin{enumerate}
  \item[(i)] Let $\mu \in I^{2k}_{2s'_1+s'_2}, \nu \in I^{2k}_{2s_1 +s_2}$ then $\sharp^p (\mu \nu) \leq 2s_1 +s_2.$ If $\sharp^p(\mu \nu) = 2s_1 +s_2$ then $$\mu \nu = x^{l(P \vee Q)} C_{(w, F), (d''', T)}^{r_{\mu}[(w, F), (d'', Q)]((f', \sigma'_1), \sigma'_2)}$$ where  $\mu = C^{((f, \sigma_1), \sigma_2)}_{(d, R),(d', P)}, \nu = C^{((f', \sigma'_1), \sigma'_2)}_{(d'', Q), (d''', T)}, (d, R), (d', P) \in M[(r',(s'_1, s'_2))], (w, F), (d'', Q), \\ (d''', T) \in M[(r,(s_1, s_2))]$, $((f, \sigma_1), \sigma_2) \in \left(\mathbb{Z}_2 \wr \mathfrak{S}_{s'_1}\right) \times \mathfrak{S}_{s'_2}, $ $((f', \sigma'_1), \sigma'_2) \in \left( \mathbb{Z}_2 \wr \mathfrak{S}_{s_1} \right) \times \mathfrak{S}_{s_2}, r_{\mu}[(w, F), (d'', Q)]$ is independent of $((f', \sigma'_1), \sigma'_2)$ and $(d''', T)$.

  \item[(ii)]Let $\widetilde{\mu} \in \widetilde{I}^{2k}_{2s'_1+s'_2}, \widetilde{\nu} \in \widetilde{I}^{2k}_{2s_1 +s_2}$ then $\sharp^p (\widetilde{\mu} \widetilde{\nu}) \leq 2s_1 +s_2.$ If $\sharp^p(\widetilde{\mu} \widetilde{\nu}) = 2s_1 +s_2$ then $$\widetilde{\mu} \widetilde{\nu} = x^{l(\widetilde{P} \vee \widetilde{Q})} C_{(\widetilde{w}, \widetilde{F}), \\ (\widetilde{d}''', \widetilde{T})}^{r_{\widetilde{\mu}}[(\widetilde{w}, \widetilde{F}), (\widetilde{d}'', \widetilde{Q})]((\widetilde{f}', \widetilde{\sigma}'_1), \widetilde{\sigma}'_2)}$$ where  $\widetilde{\mu} = \widetilde{C}^{((\widetilde{f}, \widetilde{\sigma}_1), \widetilde{\sigma_2})}_{(\widetilde{d}, \widetilde{R}),(\widetilde{d}', \widetilde{P})}, \widetilde{\nu} = \widetilde{C}^{((\widetilde{f}', \widetilde{\sigma}'_1), \widetilde{\sigma}'_2)}_{(\widetilde{d}'', \widetilde{Q}), (\widetilde{d}''', \widetilde{T})}, (\widetilde{d}, \widetilde{R}), (\widetilde{d}', \widetilde{P}) \in \widetilde{M}[(r',(s'_1, s'_2))], (\widetilde{w}, \
widetilde{F}), (\widetilde{d}'', \widetilde{Q}), (\widetilde{d}''', \widetilde{T}) \in \widetilde{M}[(r,(s_1, s_2))]$, $((\widetilde{f}, \widetilde{\sigma}_1), \widetilde{\sigma}_2) \in \left(\mathbb{Z}_2 \wr \mathfrak{S}_{s'_1}\right) \times \mathfrak{S}_{s'_2}, $ $((\widetilde{f}', \widetilde{\sigma}'_1), \widetilde{\sigma}'_2) \in \left( \mathbb{Z}_2 \wr \mathfrak{S}_{s_1} \right) \times \mathfrak{S}_{s_2}, r_{\widetilde{\mu}}[(\widetilde{w}, \widetilde{F}), (\widetilde{d}'', \widetilde{Q})]$ is independent of $((\widetilde{f}', \widetilde{\sigma}'_1), \widetilde{\sigma}'_2)$ and $(\widetilde{d}''', \widetilde{T})$.

\end{enumerate}
\end{lem}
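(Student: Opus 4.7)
The plan is to reduce this lemma to Lemma \ref{L4.8} by analyzing carefully what happens when two diagrams of different propagation numbers are multiplied. Since the algebra of $\mathbb{Z}_2$-relations and signed partition algebras are subalgebras of $\mathbb{P}_{2k}(x)$, the multiplication rules of the ambient partition algebra apply, and in particular we can invoke Lemma 4.4 of \cite{X}.

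The first step is to construct the new top datum $(w, F) \in M[(r,(s_1,s_2))]$. Under the hypothesis $\sharp^p(\mu\nu) = 2s_1+s_2$, exactly $(2s'_1+s'_2) - (2s_1+s_2)$ through classes of $\mu$ fail to reach, through the middle row, any through class of $\nu$; by definition of the partition-algebra product, these classes of $P$ collapse into horizontal edges of the top row of $\mu\nu$. I define $w \in R_k^{\mathbb{Z}_2}$ to be obtained from $d$ by merging the non-propagating components of $P$ according to the connectivity that $d''$ induces in the middle row, and I take $F$ to be the set of $2s_1+s_2$ components of $w$ that remain through classes of $\mu\nu$, together with its natural $\{e\}$- and $\mathbb{Z}_2$-decomposition. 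That $w \in R_k^{\mathbb{Z}_2}$ and that $F$ has the correct structure required by Definition \ref{D4.2} follow from the $\mathbb{Z}_2$-stability of $d$ and $d''$ together with Remark \ref{R2.4}.

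The second step is to apply Lemma \ref{L4.8}(i) to the reduced product. After the collapse, $\mu$ determines a diagram of propagation exactly $2s_1+s_2$ with top half $(w,F)$ and bottom half $(d',P)$, carrying the same group data $((f,\sigma_1),\sigma_2)$ on the surviving components. Lemma \ref{L4.8}(i) applied to this reduced diagram and $\nu$ yields the expression $C^{r_\mu[(w,F),(d'',Q)]((f',\sigma'_1),\sigma'_2)}_{(w,F),(d''',T)}$, where the middle-row permutation $\phi^{r}_{s_1,s_2}[(d',P),(d'',Q)]$ supplies the factor in $r_\mu$ and the scalar $x^{l(P\vee Q)}$ records the loops formed in $d'\cdot d''$ from components meeting neither $P$ nor $Q$, consistently with Definition \ref{D4.7}.

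Independence is then immediate: $(d''',T)$ contributes only to the bottom of the product, while $((f',\sigma'_1),\sigma'_2)$ merely permutes the attachments of the propagating components of $\nu$ at the bottom; neither affects $(w,F)$ at the top nor the middle-row data encoded by $\phi^{r}_{s_1,s_2}[(d',P),(d'',Q)]$. The main obstacle is the construction of $w$ and verifying that the induced decomposition of $F$ into $\{e\}$- and $\mathbb{Z}_2$-type components is well-defined and $\mathbb{Z}_2$-stable; everything else is a bookkeeping extension of Lemma \ref{L4.8}. Part (ii) for $\widetilde{\mu}, \widetilde{\nu}$ follows by the identical argument, noting that the horizontal-edge counts $H_e$ and $H_{\mathbb{Z}_2}$ of Definition \ref{D2.8} behave additively under the collapse, so that the resulting $(\widetilde{w},\widetilde{F})$ indeed lies in $\widetilde{M}[(r,(s_1,s_2))]$.
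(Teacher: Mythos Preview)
Your overall strategy---reduce to the partition-algebra analysis of Xi \cite{X} using that $A_k^{\mathbb{Z}_2}$ and $\overrightarrow{A}_k^{\mathbb{Z}_2}$ are subalgebras of $\mathbb{P}_{2k}(x)$---is exactly the paper's. The paper's own proof is in fact briefer than yours: it simply observes that since $\sharp^p(\mu\nu)=2s_1+s_2$, Lemma~\ref{L4.5} (the reconstruction bijection) together with \cite{X} furnish $(w,F),(v,Q)\in M[(r,(s_1,s_2))]$ and $((f'',\sigma''_1),\sigma''_2)$ with the stated product formula, and asserts the independence. It does \emph{not} pass through Lemma~\ref{L4.8}.

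Your attempt to route through Lemma~\ref{L4.8} has a type mismatch you should fix. You write that ``after the collapse, $\mu$ determines a diagram of propagation exactly $2s_1+s_2$ with top half $(w,F)$ and bottom half $(d',P)$, carrying the same group data $((f,\sigma_1),\sigma_2)$.'' But $(d',P)\in M[(r',(s'_1,s'_2))]$ and $((f,\sigma_1),\sigma_2)\in(\mathbb{Z}_2\wr\mathfrak{S}_{s'_1})\times\mathfrak{S}_{s'_2}$, so this ``collapsed'' diagram is not an element of $I^{2k}_{2s_1+s_2}$ and Lemma~\ref{L4.8}(i) does not literally apply to it and $\nu$. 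For the same reason, the expression $\phi^{r}_{s_1,s_2}[(d',P),(d'',Q)]$ you invoke is ill-typed: Definition~\ref{D4.7} defines $\phi^r_{s_1,s_2}$ only on $M[(r,(s_1,s_2))]\times M[(r,(s_1,s_2))]$, and $(d',P)$ lies in the wrong index set. The clean repair is to drop the intermediate appeal to Lemma~\ref{L4.8} and argue directly, as the paper does: apply Lemma~\ref{L4.4} to the product $\mu\nu\in I^{2k}_{2s_1+s_2}$ to extract $(w,F)$, $(d''',T)$ and the group element $((f'',\sigma''_1),\sigma''_2)$, and then observe (exactly as in Lemma~4.4 of \cite{X}) that $(w,F)$ and $((f'',\sigma''_1),\sigma''_2)$ are determined by $\mu$ and $(d'',Q)$ alone. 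Your independence paragraph and your remark on part~(ii) are fine once this is corrected.
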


\begin{proof}
\NI \textbf{Proof of (i):}If $\sharp^p \left(C^{((f, \sigma_1), \sigma_2)}_{(d, R), (d', P)} \ C^{((f', \sigma'_1), \sigma'_2)}_{(d'', Q), (d''', T)} \right) = 2s_1 +s_2$, then by Lemma \ref{L4.5} and \cite{X} there exists $(w, F), (v, Q) \in M[(r,(s_1, s_2))], ((f'', \sigma''_1), \sigma''_2) \in \mathbb{Z}_2 \wr \mathfrak{S}_{s_1} \times \mathfrak{S}_{s_2}.$

\centerline{$C^{((f, \sigma_1), \sigma_2)}_{(d, R), (d', P)} \ C^{((f', \sigma'_1), \sigma'_2)}_{(d'', Q), (d''', T)} = C^{r_{\mu}[(w, F), (d'', Q)] ((f', \sigma'_1), \sigma'_2)}_{(w, F), (v, Q)}$}
\NI where $\mu = C^{((f, \sigma_1), \sigma_2)}_{(d, R), (d', P)}$ $r_{\mu}[(w, F), (d'', Q)] ((f', \sigma'_1), \sigma'_2)  = x^{l(P \vee Q)} ((f'', \sigma''_1), \sigma''_2)$ and it is independent of $((f', \sigma'_1), \sigma'_2)$ and $(d''', T).$

\NI \textbf{Proof of (ii):} Proof of (ii) is same as that of proof of (i).
\end{proof}

\begin{defn}\label{D4.10}
Put,

\begin{enumerate}
  \item[(i)] $\Lambda = \{(r,(s_1, s_2)) \ | \ r = 2s_1 + s_2, 0 \leq s_1, s_2 \leq k\}$ and
  \item[(ii)] $\widetilde{\Lambda} = \{(r,(s_1, s_2)) \ | \ r = 2s_1 + s_2, 0 \leq s_1 \leq k, 0 \leq s_2 \leq k-1\}.$
\end{enumerate}

Define a relation '$\leq$' on $\Lambda(\widetilde{\Lambda})$ as follows:

\centerline{$(r,(s_1, s_2)) \leq (r', (s'_1, s'_2))$} if and only if
\begin{enumerate}
    \item[(a)] $r < r'$ or
    \item [(b)] $r' = r$ and $s_1 + s_2 < s'_1 + s'_2$
   \end{enumerate}

Thus $(\Lambda, \leq)((\widetilde{\Lambda}, \leq))$ is a partially ordered set.
\end{defn}

\begin{note}\label{N2}
Let $B^r(s_1, s_2)  = B(s_1, s_2)= \left(\mathbb{Z}_2 \wr \mathfrak{S}_{s_1}\right) \times
\mathfrak{S}_{s_2}$ and $\Gamma_{s_1, s_2} =
\mathscr{A}[ \left(\mathbb{Z}_2 \wr \mathfrak{S}_{s_1}\right) \times
\mathfrak{S}_{s_2}],$ where $r = 2s_1 + s_2.$ The elements of $B(s_1, s_2)$ forms a
basis of $\Gamma(s_1, s_2).$ Thus $(\Gamma(s_1, s_2), B(s_1,
s_2))$ is a hyper group.
\end{note}

\begin{defn}\label{D4.11}
Let $M[(r,(s_1, s_2))]\left(\widetilde{M}[(r,(s_1, s_2))] \right)$ be as in Definition \ref{D4.2}.

Define  maps,

\begin{enumerate}
  \item[(i)] $C : M[(r,(s_1, s_2))] \times B(s_1, s_2) \times M[(r,(s_1, s_2))]
 \rightarrow A^{\mathbb{Z}_2}_k$
as follows:

\centerline{$C \left[ (d', P), ((f, \sigma_1), \sigma_2), (d'', Q)\right] = d, $} where $d = C^{((f, \sigma_1), \sigma_2)}_{(d', P), (d'', Q)},$ as in Remark \ref{R4.6} and $d \in I_{2s_1+s_2}^{2k}.$

 By Lemma \ref{L4.5},
it is clear that $C$ is injective.

  \item[(ii)] $\widetilde{C} : \widetilde{M}[(r,(s_1, s_2))] \times B(s_1, s_2) \times \widetilde{M}[(r,(s_1, s_2))] \rightarrow \overrightarrow{A}^{\mathbb{Z}_2}_k,$
      as follows:

\centerline{$\widetilde{C} \left[ (\widetilde{d}', \widetilde{P}), ((\widetilde{f}, \widetilde{\sigma}_1), \widetilde{\sigma}_2), (\widetilde{d}'', \widetilde{Q})\right] = \widetilde{d}, $} where $\widetilde{d} = \widetilde{C}^{((\widetilde{f}, \widetilde{\sigma}_1), \widetilde{\sigma}_2)}_{(\widetilde{d}', \widetilde{P}), (\widetilde{d}'', \widetilde{Q})},$ as in Remark \ref{R4.6} and $\widetilde{d} \in \widetilde{I}_{2s_1+s_2}^{2k}.$

 By Lemma \ref{L4.5},
it is clear that $\widetilde{C}$ is injective.

\end{enumerate}

\end{defn}

\begin{defn}\label{D4.12}

Define,
\begin{enumerate}
  \item[(i)] $\ast : A^{\mathbb{Z}_2}_k \rightarrow
A^{\mathbb{Z}_2}_k$ as follows:

\centerline{$\left( C^{((f, \sigma_1), \sigma_2)}_{(d',P), (d'', Q)}\right)^{\ast} = \left( C^{((f, \sigma_1), \sigma_2)}_{(d',P), (d'', Q)}\right)^f = C_{(d'', Q), (d', P)}^{((f, \sigma_1), \sigma_2)^{-1}}$}
\NI where $f$ is the flip of the diagram and inverse mapping is the anti-automorphism of the hyper group $\left(\Gamma(s_1, s_2), B(s_1, s_2)\right).$

Clearly, $\ast$ is an involutary anti-automorphism of $A_k^{\mathbb{Z}_2}.$

  \item[(ii)]$\widetilde{\ast} : \overrightarrow{A}^{\mathbb{Z}_2}_k \rightarrow
\overrightarrow{A}^{\mathbb{Z}_2}_k$ as follows:

\centerline{$\left( \widetilde{C}^{((\widetilde{f}, \widetilde{\sigma}_1), \widetilde{\sigma}_2)}_{(\widetilde{d}',\widetilde{P}), (\widetilde{d}'', \widetilde{Q})}\right)^{\widetilde{\ast}} = \left( \widetilde{C}^{((\widetilde{f}, \widetilde{\sigma}_1), \widetilde{\sigma}_2)}_{(\widetilde{d}',\widetilde{P}), (\widetilde{d}'', \widetilde{Q})}\right)^f = \widetilde{C}_{(\widetilde{d}'', \widetilde{Q}), (\widetilde{d}', \widetilde{P})}^{((\widetilde{f}, \widetilde{\sigma}_1), \widetilde{\sigma}_2)^{-1}}$}
\NI where $f$ is the flip of the diagram and inverse mapping is the anti-automorphism of the hyper group $\left(\Gamma(s_1, s_2), B(s_1, s_2)\right).$

Clearly, $\ast$ is an involutary anti-automorphism of $\overrightarrow{A}_k^{\mathbb{Z}_2}.$

\end{enumerate}
\end{defn}

\begin{notn} \label{N4.13}
 If $b \in \Gamma(s_1, s_2)$ such that $b = \underset{((f_i,
\sigma_{1_i}), \sigma_{2_i}) \in B(s_1, s_2)}{\sum} c_i ((f_i,
\sigma_{1_i}), \sigma_{2_i})$ for some scalars.
\begin{enumerate}
  \item[(i)] We write $C^b_{(d', P), (d'', Q)} \in \mathscr{A} \left[A^{\mathbb{Z}_2}_k\right]$ as
shorthand for $\underset{((f_i, \sigma_{1_i}), \sigma_{2_i}) \in
B(s_1, s_2)}{\sum} c_i C^{((f_i, \sigma_{1_i}), \sigma_{2_i})}_{(d', P), (d'', Q)}.$

Also, write $C_{s_1, s_2}$ for the image under $C$ of $M[(r,(s_1, s_2))]
\times B(s_1, s_2) \times M[(r,(s_1, s_2))].$

  \item[(ii)]We write $\widetilde{C}^b_{(\widetilde{d}', \widetilde{P}), (\widetilde{d}'', \widetilde{Q})} \in \mathscr{A} \left[\overrightarrow{A}^{\mathbb{Z}_2}_k\right]$ as
shorthand for $\underset{((\widetilde{f}_i, \widetilde{\sigma}_{1_i}), \widetilde{\sigma}_{2_i}) \in
B(s_1, s_2)}{\sum} c_i \widetilde{C}^{((\widetilde{f}_i, \widetilde{\sigma}_{1_i}), \widetilde{\sigma}_{2_i})}_{(\widetilde{d}', \widetilde{P}), (\widetilde{d}'', \widetilde{Q})}.$

Also, write $\widetilde{C}_{s_1, s_2}$ for the image under $\widetilde{C}$ of $\widetilde{M}[(r,(s_1, s_2))]
\times B(s_1, s_2) \times \widetilde{M}[(r,(s_1, s_2))].$

\end{enumerate}
\end{notn}
\begin{thm}  \label{T4.14}
Let $\mathscr{A} = \mathbb{C}(x).$
\begin{enumerate}
  \item[(i)] An algebra of $\mathbb{Z}_2$-relations  $\mathscr{A}[A^{\mathbb{Z}_2}_{k}]$ is a tabular algebra  together with a table datum \\ $(\Lambda, \Gamma, B, M[(r, (s_1, s_2))], C, \ast)$ where :
\begin{enumerate}
  \item[(a)] $\Lambda$ is a finite poset where $\Lambda$ is as in Definition \ref{D4.10}. For each $(r, (s_1, s_2)) \in \Lambda$, $(\Gamma(s_1, s_2), B(s_1, s_2))$ is a hypergroup over $\mathbb{C}$ and $M[(r, (s_1, s_2))]$ is a finite set. The map

\centerline{$C: \underset{(r, (s_1, s_2)) \in \Lambda}{\coprod} (M[(r, (s_1, s_2))] \times B(s_1, s_2) \times M[(r, (s_1, s_2))]) \rightarrow A_k^{\mathbb{Z}_2}$} is injective with image an $\mathscr{A}$-basis of $A_k^{\mathbb{Z}_2.}$
  \item[(b)] $\ast$ is an $\mathscr{A}$-linear involutary anti-automorphism of $A_k^{\mathbb{Z}_2} .$

  \item[(c)] If $(r, (s_1, s_2)) \in \Lambda, ((f, \sigma_1), \sigma_2) \in \Gamma(s_1, s_2)$ and $(d', P), (d'', Q) \in M[(r, (s_1, s_2))] $ then for all $a \in A_k^{\mathbb{Z}_2}$ we have

\centerline{$a C^{((f, \sigma_1), \sigma_2)}_{(d', P), (d'', Q)} \equiv \underset{(d'''_i, R_i) \in M[(r, (s_1, s_2 ))]}{\sum} C^{r_a[(d'''_i, R_i), (d', P)]((f, \sigma_1), \sigma_2)}_{(d'''_i, R_i), (d'', Q)} \ \ \ \text{ mod } A_k^{\mathbb{Z}_2}(< (r, (s_1, s_2)))$,}
\NI where $r_a[(d'''_i, R_i), (d', P)] ((f, \sigma_1), \sigma_2)$ is independent of $(d'', Q)$ and of $((f, \sigma_1), \sigma_2).$

\end{enumerate}
  \item[(ii)] An algebra of signed partition algebras  $ \mathscr{A}[\overrightarrow{A}^{\mathbb{Z}_2}_{k}] $ is a tabular algebra  together with a table datum \\ $(\widetilde{\Lambda}, \Gamma, B, \widetilde{M}[(r, (s_1, s_2))], \widetilde{C}, \widetilde{\ast}) $ where :
\begin{enumerate}
  \item[(a)] $\widetilde{\Lambda}$ is a finite poset where $\widetilde{\Lambda}$ is as in Definition \ref{D4.10}. For each $(r, (s_1, s_2)) \in \widetilde{\Lambda}$, $(\Gamma(s_1, s_2), B(s_1, s_2))$ is a hypergroup over $\mathbb{C}$ and $ \widetilde{M}[(r, (s_1, s_2))]$ is a finite set. The map

\centerline{$\widetilde{C}: \underset{(r, (s_1, s_2)) \in \widetilde{\Lambda}}{\coprod} (\widetilde{M}[(r, (s_1, s_2))] \times B(s_1, s_2) \times \widetilde{M}[(r, (s_1, s_2))]) \rightarrow \overrightarrow{A}_k^{\mathbb{Z}_2}$} is injective with image an $\mathscr{A}$-basis of $\overrightarrow{A}_k^{\mathbb{Z}_2.}$
  \item[(b)]$\widetilde{\ast}$ is an $\mathscr{A}$-linear involutary anti-automorphism of $\overrightarrow{A}_k^{\mathbb{Z}_2}.$

  \item[(c)]If $(r, (s_1, s_2)) \in \widetilde{\Lambda}, ((f, \sigma_1), \sigma_2) \in \Gamma(s_1, s_2)$ and $ \left( (\widetilde{d}', \widetilde{P}), (\widetilde{d}'', \widetilde{Q}) \in \widetilde{M}[(r, (s_1, s_2))] \right)$ then for all $\widetilde{a} \in \overrightarrow{A}_k^{\mathbb{Z}_2}$ we have

\centerline{$\widetilde{a} \widetilde{C}^{((\widetilde{f}, \widetilde{\sigma}_1), \widetilde{\sigma}_2)}_{(\widetilde{d}', \widetilde{P}), (\widetilde{d}'', \widetilde{Q})} \equiv \underset{(\widetilde{d}'''_i, \widetilde{R}_i) \in \widetilde{M}[(r, (s_1, s_2 ))]}{\sum} \widetilde{C}^{r_{\widetilde{a}}[(\widetilde{d}'''_i, \widetilde{R}_i), (\widetilde{d}', \widetilde{P})]((\widetilde{f}, \widetilde{\sigma}_1), \widetilde{\sigma}_2)}_{(\widetilde{d}'''_i, \widetilde{R}_i), (\widetilde{d}'', \widetilde{Q})} \ \ \ \text{ mod } \overrightarrow{A}_k^{\mathbb{Z}_2}(< (r, (s_1, s_2)))$,}
  \NI where $r_{\widetilde{a}}[(\widetilde{d}'''_i, \widetilde{R}_i), (\widetilde{d}', \widetilde{P})] ((\widetilde{f}, \widetilde{\sigma}_1), \widetilde{\sigma}_2)$ is independent of $(\widetilde{d}'', \widetilde{Q})$ and of $((\widetilde{f}, \widetilde{\sigma}_1), \widetilde{\sigma}_2).$

\end{enumerate}

%\end{enumerate}
   \end{enumerate}
\end{thm}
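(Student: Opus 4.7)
The plan is to verify, point by point, the three axioms (a), (b), (c) of a tabular algebra in the sense of \cite{GM1} for the datum in part (i), and then to rerun the entire argument with all data decorated by tildes, invoking Lemmas \ref{L4.4}(ii), \ref{L4.5}(ii), \ref{L4.8}(ii), \ref{L4.9}(ii) and Definitions \ref{D4.2}(ii), \ref{D4.10} in place of their unsigned counterparts, to obtain part (ii). No step specific to the signed case is genuinely new, since the two families of lemmas are stated in perfectly parallel form.

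For (a), $\Lambda$ is a finite poset by Definition \ref{D4.10}, each $(\Gamma(s_1,s_2), B(s_1,s_2))$ is a hypergroup over $\mathbb{C}$ by Note \ref{N2}, and $M[(r,(s_1,s_2))]$ is finite by Definition \ref{D4.2}. The essential claim that $C$ is injective with image an $\mathscr{A}$-basis of $A_k^{\mathbb{Z}_2}$ is exactly the content of Lemmas \ref{L4.4}(i) and \ref{L4.5}(i): I would stratify $R_{2k}^{\mathbb{Z}_2}$ as the disjoint union $\coprod_{(r,(s_1,s_2))\in\Lambda} I^{2k}_{2s_1+s_2}$ according to propagating type, use Lemma \ref{L4.4}(i) to send each $d$ in this union to the triple $\bigl((d^+,P),((f,\sigma_1),\sigma_2),(d^-,Q)\bigr)$, and use Lemma \ref{L4.5}(i) to reconstruct the diagram from a triple, obtaining a bijection onto $\coprod_{(r,(s_1,s_2))\in\Lambda} M[(r,(s_1,s_2))]\times B(s_1,s_2)\times M[(r,(s_1,s_2))]$.

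For (b), Definition \ref{D4.12}(i) specifies $\ast$ on basis diagrams as ``flip and invert the hypergroup label'', extended $\mathscr{A}$-linearly. Involutivity is immediate because flipping a partition diagram twice is the identity and because inversion on $B(s_1,s_2)$ is involutive by Note \ref{N2}. The anti-automorphism property reduces by linearity to $(d_1 d_2)^\ast = d_2^\ast d_1^\ast$ for basis diagrams; this is the standard observation that horizontal flip reverses the concatenation product of partition diagrams, and compatibility with the hypergroup inversion comes from the fact that Lemmas \ref{L4.8} and \ref{L4.9} produce multiplication outputs in a form where inverting the overall hypergroup label and swapping top and bottom correspond exactly to performing the reverse product with flipped inputs.

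For (c), I would linearise in $a$ and reduce to $a = \mu$ a single basis diagram, lying in some stratum $I^{2k}_{2s_1'+s_2'}$. Two cases arise. If $\sharp^p\bigl(\mu\, C^{((f,\sigma_1),\sigma_2)}_{(d',P),(d'',Q)}\bigr) < 2s_1+s_2$, or if the refined through-class type of the product differs from $(s_1,s_2)$ while the propagating number is preserved, then the product lies in $A_k^{\mathbb{Z}_2}(<(r,(s_1,s_2)))$ by Definition \ref{D4.10} and there is nothing to show. Otherwise the full invariant $(r,(s_1,s_2))$ is preserved, and Lemma \ref{L4.8}(i) when $(r',(s_1',s_2'))=(r,(s_1,s_2))$, or Lemma \ref{L4.9}(i) when they differ, returns a single basis diagram of the form $C^{r_\mu[(d''',R),(d',P)]\,((f,\sigma_1),\sigma_2)}_{(d''',R),(d'',Q)}$ whose second index is exactly $(d'',Q)$ and whose hypergroup label has the required factorisation; these lemmas also supply the asserted independence of $r_\mu[(d''',R),(d',P)]$ from $(d'',Q)$ and from $((f,\sigma_1),\sigma_2)$. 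Summing over the basis expansion of $a$ gives the stated congruence. The main obstacle is the bookkeeping of the two-coordinate order on $\Lambda$ in Definition \ref{D4.10} --- one must verify that every product for which $r$ is preserved but $(s_1,s_2)$ strictly decreases in $s_1+s_2$ actually falls into the order ideal, and that the sole contribution at the top stratum parses as a single basis element once the hypergroup label is opened up --- but this is exactly what the case hypotheses of Lemmas \ref{L4.8} and \ref{L4.9} handle.
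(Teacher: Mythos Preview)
Your proposal is correct and follows essentially the same approach as the paper: the paper's own proof is a single sentence pointing to Definitions \ref{D4.2}, \ref{D4.10}, \ref{D4.11}, \ref{D4.12} and Note \ref{N2} for parts (a) and (b), and to Lemmas \ref{L4.4}, \ref{L4.5}, \ref{L4.8}, \ref{L4.9} for part (c), which is exactly the route you sketch. Your version simply unpacks more of the bookkeeping (the bijection for the basis, the flip argument for $\ast$, the case split on whether the propagating invariant drops) than the paper does.
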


\begin{proof}
The proof of (i)(a) and (ii)(a) follows Definitions \ref{D4.2}, \ref{D4.10}, \ref{D4.11} and note \ref{N2}, proof of (i)(b) and (ii)(b) follows from Definition \ref{D4.12} and proof of (i)(c) and (ii)(c) follows from Lemmas \ref{L4.4}, \ref{L4.5}, \ref{L4.8} and \ref{L4.9}.
\end{proof}
\begin{cor}
Let $\mathscr{A} = \mathbb{C}(x).$ A partition algebra of $\mathbb{P}_{2k}(x^2)$ is a tabular algebra  together with a table datum $(\Lambda, \Gamma, B, M[(r, (s_1, s_2))], C, \ast)$ with $f = id$ and $s_2=0.$
\end{cor}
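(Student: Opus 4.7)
The plan is to deduce this corollary by restricting the table datum of Theorem \ref{T4.14}(ii) to the subalgebra of $\overrightarrow{A}_k^{\mathbb{Z}_2}(x)$ identified with $\mathbb{P}_{2k}(x^2)$ via Definition \ref{D2.7}. Under that identification, $\mathbb{P}_{2k}(x^2)$ corresponds precisely to the linear span of those diagrams whose connected components are all $\{e\}$-components, with the $\mathbb{Z}_2$-twist of every through-class being trivial; the reason the parameter becomes $x^2$ is that each loop of the partition-algebra composition is resolved in the signed picture as a pair of loops (the $e$-copy together with the $g$-copy), so each partition-algebra loop contributes exactly $x\cdot x = x^2$ when counted via $l(P\vee Q)$ in Definition \ref{D4.7}.

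Concretely, I would first restrict the poset $\widetilde{\Lambda}$ of Definition \ref{D4.10}(ii) to the sub-poset $\{(r,(s_1,0)) : 0\le s_1\le k\}$, which inherits the order and is finite. For each such $(r,(s_1,0))$, the hypergroup $B(s_1,0) = \mathbb{Z}_2\wr\mathfrak{S}_{s_1}$ specialises, upon fixing the wreath component $f=id$, to the symmetric group $\mathfrak{S}_{s_1}$; together with its group algebra this furnishes the required hypergroup. Similarly, $\widetilde{M}[(r,(s_1,0))]$ restricts to pairs $(d,P)$ in which $P$ is a union of $\{e\}$-component pairs $P_i^e\cup P_i^g$ and no $\mathbb{Z}_2$-components appear. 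Then the restriction of $\widetilde{C}$ from Definition \ref{D4.11}(ii) to these inputs is injective by the general injectivity in Lemma \ref{L4.5} and hits precisely the diagram basis of $\mathbb{P}_{2k}(x^2)$, giving axiom (a). Axiom (b) is immediate because the flip $\widetilde{\ast}$ sends an untwisted $\{e\}$-only diagram to another such diagram, and $id^{-1}=id$ in the wreath component.

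For axiom (c) one inherits the straightening relation from Theorem \ref{T4.14}(ii) after checking closure: whenever $\widetilde a\in \mathbb{P}_{2k}(x^2)$ and the basis element it multiplies sits in this subalgebra, the products displayed by Lemmas \ref{L4.8} and \ref{L4.9} involve only diagrams of the same untwisted/$\{e\}$-only type, since composing such diagrams never creates a $\mathbb{Z}_2$-orbit from two $\{e\}$-orbits and never introduces a nontrivial twist. Hence the mod-lower-terms identity from Theorem \ref{T4.14}(ii)(c) transports to the subalgebra verbatim with $s_2=0$ and $f=id$, yielding (c).

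The main obstacle is the bookkeeping for the $x^2$ parameter: one must verify that the exponent $l(P\vee Q)$ computed via Definition \ref{D4.7} inside the untwisted $\{e\}$-only subalgebra equals $2$ times the number of internal loops produced by the corresponding composition of partition-algebra diagrams. This amounts to observing that every loop of the underlying partition diagram lifts to an $e$-loop plus a $g$-loop in the signed picture, a combinatorial fact that is immediate once one unwinds Definition \ref{D2.7} and the correspondence $(i',e)\mapsto(i,e)$, $(i',g)\mapsto(i,g)$. Once this loop-doubling is confirmed, all three tabular-algebra axioms reduce to the already-proven content of Theorem \ref{T4.14}, and the corollary follows.
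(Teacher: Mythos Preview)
Your proposal is correct and follows the same approach implicit in the paper: the corollary is stated there without proof, as an immediate specialization of Theorem~\ref{T4.14} obtained by restricting to the doubled, untwisted diagrams (those with $s_2=0$ and $f=id$), and your write-up simply makes this specialization explicit. One minor remark: the corollary as stated uses the datum $(\Lambda,\Gamma,B,M[(r,(s_1,s_2))],C,\ast)$ from part~(i) rather than the tilded datum from part~(ii), so it is slightly cleaner to restrict from the algebra of $\mathbb{Z}_2$-relations $A_k^{\mathbb{Z}_2}$ directly (avoiding the extra inequality constraints in $\widetilde{M}$), though either route works and your closure and loop-doubling checks go through unchanged.
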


\section{\textbf{A Cellular Basis of the algebra of $\mathbb{Z}_2$-relations and signed partition algebras}}

In this section, we compute a cellular basis for the algebra of $\mathbb{Z}_2$-relations and signed partition algebras by making use of the basis defined in Lemma \ref{L4.4} and also by using  cellular bases of the group
algebras $\mathscr{A}[\mathbb{Z}_2 \wr \mathfrak{S}_k]$ and
$\mathscr{A}[\mathfrak{S}_k]$ given  in \cite{RGA}.

\begin{defn} \label{D5.1}

Define,
\begin{enumerate}
  \item[(i)]$\Lambda' := \left\{ ((r, (s_1, s_2)), ((\lambda_1, \lambda_2), \mu)) \ | \ (r, (s_1, s_2)) \in \Lambda \right\}$
  \item[(ii)]$\widetilde{\Lambda}' := \left\{ ((r, (s_1, s_2)), ((\lambda_1, \lambda_2), \mu)) \ | \ (r, (s_1, s_2)) \in \widetilde{\Lambda} \right\}$
\end{enumerate}

 with the order given by

 \centerline{$\big( r', ( s'_1,
 s'_2) , ((\lambda'_1, \lambda'_2), \mu')\big) \geq \big( r,  (s_1,
s_2) , ((\lambda_1, \lambda_2), \mu)\big)$}

 if and only if
\begin{enumerate}
    \item[(a)] $r' \geq r$ or
    \item[(b)] $r' = r$ and  $(s'_1, s'_2) \geq (s_1, s_2)$ i.e., $s'_1 + s'_2 > s_1 + s_2$
    \item[(c)] $r' = r,  (s'_1,  s'_2) = (s_1,
    s_2)$ and $(\lambda'_1, \lambda'_2)  \rhd (\lambda_1, \lambda_2).$
\item[(d)] $r = r', (s'_1,  s'_2) = (s_1,
    s_2)$, $(\lambda'_1, \lambda'_2) = (\lambda_1, \lambda_2)$ and $\mu' \rhd \mu.$
\end{enumerate}
%where $\Lambda_{s_1, s_2}$ is as in Theorem ref{T2.25}.
\end{defn}

\begin{defn} \label{D5.2}

Let $[\lambda], [\mu]$ denote the trivial representation of $\lambda, \mu.$

For $((r, (s_1, s_2)), ((\lambda_1, \lambda_2), \mu)) \in \Lambda'$ and $((r, (s_1, s_2)), ((\lambda_1, \lambda_2), \mu)) \in \widetilde{\Lambda}' ,$ define

\centerline{$M' \left[ (r, (s_1, s_2)), ((\lambda_1, \lambda_2), \mu)\right] := M[(r,(s_1, s_2))] \times M^{((\lambda_1, \lambda_2), \mu)}$ }

\centerline{$\widetilde{M}' \left[ (r, (s_1, s_2)), ((\lambda_1, \lambda_2), \mu)\right] := \widetilde{M}[(r,(s_1, s_2))] \times M^{((\lambda_1, \lambda_2), \mu)}$}

\NI where $M^{((\lambda_1, \lambda_2), \mu)} := \{ ((s_{\lambda_1}, s_{\lambda_2}), s_{\mu}) \ | \ s_{\lambda_1}, s_{\lambda_2} \text{ and } s_{\mu} \text{ are the standard tableaus of shape } \lambda_1, \lambda_2 \\ \text{ and } \mu  \text{ respectively}\}.$
\begin{enumerate}
  \item[(a)] if $s_1 \neq 0$ and $s_2 \neq 0$ then
\begin{enumerate}
  \item[(i)]  $M' \Big[\big( r, (s_1, s_2) , ((\lambda_1, \lambda_2),
\mu)\big) \Big] = \Big\{ \Big((d', P), ((t_{\lambda_1},
t_{\lambda_2}), t_{\mu}) \Big) \ \Big| \ (d', P) \in M[(r,(s_1, s_2))]
 ,  \\
   t_{\lambda_1}, t_{\lambda_2} \text{
and } t_{\mu} \text{ are the standard tableaux of shapes }
\lambda_1, \lambda_2 \text{ and }  \mu \text{ respectively}
\Big\},$
  \item[(ii)]  $\widetilde{M}' \Big[\big( r, (s_1, s_2) , ((\lambda_1, \lambda_2),
\mu)\big) \Big] = \Big\{ \Big((\widetilde{d}', \widetilde{P}), ((t_{\lambda_1},
t_{\lambda_2}), t_{\mu}) \Big) \ \Big| \ (\widetilde{d}', \widetilde{P}) \in M[(r,(s_1, s_2))]
 , \\   t_{\lambda_1}, t_{\lambda_2} \text{
and } t_{\mu} \text{ are the standard tableaux of shapes }
\lambda_1, \lambda_2 \text{ and }  \mu \text{ respectively}
\Big\},$
\end{enumerate}

\item[(b)] If $s_1 \neq 0$ and $s_2 = 0$ then
\begin{enumerate}
  \item[(i)] $M' \Big[\big( r,
(s_1, 0)  , ((\lambda_1, \lambda_2), \Phi)\big) \Big] = \Big\{
((d', P), (t_{\lambda_1}, t_{\lambda_2})) \ \Big| \ (d', P) \in M[(r,(s_1, 0))],   t_{\lambda_1}\\\text{
and } t_{\lambda_2} \text{ are  }   \text{  the standard
tableaux of shapes } \lambda_1 \text{ and } \lambda_2 \text{
respectively} \Big\},$
   \item[(ii)]  $\widetilde{M}' \Big[\big( r,
(s_1, 0)  , ((\lambda_1, \lambda_2), \Phi)\big) \Big] = \Big\{
((\widetilde{d}', \widetilde{P}), (t_{\lambda_1}, t_{\lambda_2})) \ \Big| \ (\widetilde{d}', \widetilde{P}) \in \widetilde{M}[(r,(s_1, 0))],   t_{\lambda_1}\\\text{
and } t_{\lambda_2} \text{ are  }   \text{  the standard
tableaux of shapes } \lambda_1 \text{ and } \lambda_2 \text{
respectively} \Big\},$
\end{enumerate}

\item[(c)] If $s_1 = 0$ and $s_2 \neq 0$ then

\begin{enumerate}
  \item[(i)] $M' \Big[\big(r,
(0, s_2)  ,  ((\Phi, \Phi), \mu)\big) \Big] = \Big\{ ((d', P),
t_{\mu}) \ \Big| \ (d', P) \in M[(r,(0, s_2))],
 t_{\mu}  \text{
is a standard} \\ \text{ tableau } \text{ of shape } \mu  \Big\},$
  \item[(ii)]$\widetilde{M}' \Big[\big(r,
(0, s_2)  ,  ((\Phi, \Phi), \mu)\big) \Big] = \Big\{ ((\widetilde{d}', \widetilde{P}),
t_{\mu}) \ \Big| \ (\widetilde{d}', \widetilde{P}) \in \widetilde{M}[(r,(0, s_2))],
 t_{\mu}  \text{
is a standard} \\ \text{ tableau } \text{ of shape } \mu  \Big\},$
\end{enumerate}

\item[(d)] If $r=0, s_1 = 0$ and $s_2=0$ then
\begin{enumerate}
  \item[(i)] $M' \Big[\big(0,
(0, 0)  ,  ((\Phi, \Phi), \Phi)\big) \Big] = \left\{ (d', \Phi)
 \ \Big| \ (d', \Phi) \in M[(0, (0, 0))]
\right\}$
  \item[(ii)] $\widetilde{M}' \Big[\big(0,
(0, 0)  ,  ((\Phi, \Phi), \Phi)\big) \Big] = \left\{ (\widetilde{d}', \Phi)
 \ \Big| \ (\widetilde{d}', \Phi) \in M[(0, (0, 0))]
\right\}$
\end{enumerate}

\NI where $s_1 = \natural \{ C : C \text{ is a connected component of } $P$ \text{ such that } H_C^P = \{e\}\}$ and $s_2 = \natural \{ C : C \text{ is a }\\ \text{connected component of } $P$ \text{ such that } H_C^P = \mathbb{Z}_2 \}.$
\end{enumerate}

\end{defn}

\begin{defn}\label{D5.3}
Let
\begin{enumerate}
  \item[(i)] $C' : \underset{(r, (s_1, s_2), ((\lambda_1, \lambda_2),\mu)) \in \Lambda'} {\coprod} M' \left[ (r, (s_1, s_2), ((\lambda_1, \lambda_2), \mu))\right] \times M' \left[ (r, (s_1, s_2), ((\lambda_1, \lambda_2), \mu))\right] \rightarrow A_k^{\mathbb{Z}_2}$

      \NI be defined as

      \centerline{$C' [ ((d', P), ((s_{\lambda_1}, s_{\lambda_2}), s_{\mu})), ((d'', Q), ((t_{\lambda_1}, t_{\lambda_2}), t_{\mu}))] = C^{m^{\lambda}_{s_{\lambda}, t_{\lambda}} m^{\mu}_{s_{\mu} t_{\mu}}}_{(d', P), (d'', Q)}$}

  \item[(ii)] $\widetilde{C}' : \underset{(r, (s_1, s_2), ((\lambda_1, \lambda_2),\mu)) \in \widetilde{\Lambda}'} {\coprod} \widetilde{M}' \left[ (r, (s_1, s_2), ((\lambda_1, \lambda_2), \mu))\right] \times \widetilde{M}' \left[ (r, (s_1, s_2), ((\lambda_1, \lambda_2), \mu))\right] \rightarrow \overrightarrow{A}_k^{\mathbb{Z}_2}$

      \NI be defined as

      \centerline{$\widetilde{C}' [ ((\widetilde{d}', \widetilde{P}), ((s_{\lambda_1}, s_{\lambda_2}), s_{\mu})), ((\widetilde{d}'', \widetilde{Q}), ((t_{\lambda_1}, t_{\lambda_2}), t_{\mu}))] = C^{m^{\lambda}_{s_{\lambda}, t_{\lambda}} m^{\mu}_{s_{\mu} t_{\mu}}}_{(\widetilde{d}', \widetilde{P}), (\widetilde{d}'', \widetilde{Q})}$}

\end{enumerate}

 \NI where $m^{\lambda}_{s_{\lambda}, t_{\lambda}}$ and $m^{\mu}_{s_{\mu} t_{\mu}}$ are cellular basis for the algebras $\mathscr{A} \left[ \mathbb{Z}_2 \wr \mathfrak{S}_{s_1}\right]$ and $\mathscr{A} \left[ \mathfrak{S}_{s_2}\right]$ respectively.

\end{defn}

\begin{thm}\label{T5.4}
Let $A_k^{\mathbb{Z}_2}\left(\overrightarrow{A}_k^{\mathbb{Z}_2} \right)$ be the $\mathscr{A}$-
algebra defined in Definition \ref{D2.5}(\ref{D2.7}).
\begin{enumerate}
  \item[(i)] The algebra of $\mathbb{Z}_2$ relations $\mathscr{A}[A_k^{\mathbb{Z}_2}]$ is a cellular algebra with a cell datum $(\Lambda',M', C', \ast')$ given as follows:
      \begin{enumerate}
    \item[(a)] $\Lambda'$ is a partially ordered set where $\Lambda'$ is
     as in Definition \ref{D5.1}.
    \item[(b)] $\ast$ is the unique anti involution of
    $A_k^{\mathbb{Z}_2}.$
    \item[(c)] \begin{enumerate}
                       \item[1.] $a C'^{m^{\lambda}_{s_{\lambda}, t_{\lambda}} m^{\mu}_{s_{\mu} t_{\mu}}}_{(d', P), (d'', Q)} \equiv \underset{S' \in M' \Big[ \big(r, (
s_1, s_2) , ((\lambda_1, \lambda_2), \mu)\big) \Big]}\sum r_a[(d''', P'''), (d', P)] C'^{ m^{\lambda}_{s'_{\lambda}, t_{\lambda}} m^{\mu}_{s'_{\mu}, t_{\mu}}}_{(d''', P'''), (d'', Q)}$

$\hspace{7cm} \text{ mod } A_k^{\mathbb{Z}_2}
\Big( < \big(r, (s_1, s_2), ((\lambda_1, \lambda_2), \mu)
\big)\Big),$

\NI where  $r_a[(d''', P'''), (d', P)]$ is independent of $(d'', Q).$

                 \item[2.] $a C'_{(d, \Phi), (d', \Phi)} \equiv \underset{(d'', \Phi) \in M' \big[ (0,(0, 0),((\Phi, \Phi))\big]}{\sum} r_a[(d'', \Phi), (d, \Phi)] C'_{(d'', \Phi), (d, \Phi)}.$
    \end{enumerate}
\end{enumerate}

  \item[(ii)]The signed partition algebra is a cellular algebra $\mathscr{A}[\overrightarrow{A}_k^{\mathbb{Z}_2}]$ with a cell datum $ (\widetilde{\Lambda}', \widetilde{M}', \widetilde{C}', \widetilde{\ast}')$ given as follows:
 \begin{enumerate}
    \item[(a)] $\widetilde{\Lambda}'$ is a partially ordered set where $\widetilde{\Lambda}'$ is
     as in Definition \ref{D5.1}.
    \item[(b)] $\widetilde{\ast}$ is the unique anti involution of
    $\overrightarrow{A}_k^{\mathbb{Z}_2}.$
    \item[(c)] \begin{enumerate}
                       \item[1.] $\widetilde{a} \widetilde{C'}^{m^{\lambda}_{s_{\lambda}, t_{\lambda}} m^{\mu}_{s_{\mu} t_{\mu}}}_{(\widetilde{d}', \widetilde{P}), (\widetilde{d}'', \widetilde{Q})} \equiv \underset{\widetilde{S}' \in \widetilde{M}' \Big[ \big(r, (
s_1, s_2) , ((\lambda_1, \lambda_2), \mu)\big) \Big]}\sum r_{\widetilde{a}}[(\widetilde{d}''', \widetilde{P}'''), (\widetilde{d}', \widetilde{P})] \widetilde{C'}^{ m^{\lambda}_{s'_{\lambda}, t_{\lambda}} m^{\mu}_{s'_{\mu}, t_{\mu}}}_{(\widetilde{d}''', \widetilde{P}'''), (\widetilde{d}'', \widetilde{Q})}$

$\hspace{7cm} \text{ mod } \overrightarrow{A}_k^{\mathbb{Z}_2}
\Big( < \big(r, (s_1, s_2), ((\lambda_1, \lambda_2), \mu)
\big)\Big),$

\NI where  $r_{\widetilde{a}}[(\widetilde{d}''', \widetilde{P}'''), (\widetilde{d}', \widetilde{P})]$ is independent of $(\widetilde{d}'',\widetilde{Q}).$

                 \item[2.] $\widetilde{a} \widetilde{C}'_{(\widetilde{d}, \Phi), (\widetilde{d}', \Phi)} \equiv \underset{(\widetilde{d}'', \Phi) \in \widetilde{M}' \big[ (0,(0, 0),((\Phi, \Phi))\big]}{\sum} r_{\widetilde{a}}[(\widetilde{d}'', \Phi), (\widetilde{d}, \Phi)] \widetilde{C}'_{(\widetilde{d}'', \Phi), (\widetilde{d}, \Phi)}.$
    \end{enumerate}
\end{enumerate}

\end{enumerate}

\end{thm}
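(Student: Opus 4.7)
The plan is to deduce Theorem \ref{T5.4} by assembling the cellular basis $C'$ out of two ingredients that are already in hand: the tabular basis $C$ from Theorem \ref{T4.14}, and the cellular basis $\{m^{\lambda}_{s_{\lambda} t_{\lambda}} m^{\mu}_{s_{\mu} t_{\mu}}\}$ of $\mathscr{A}[(\mathbb{Z}_2 \wr \mathfrak{S}_{s_1}) \times \mathfrak{S}_{s_2}]$ from Theorem \ref{T2.15}. Within each tabular layer $(r,(s_1,s_2)) \in \Lambda$, the hypergroup basis $B(s_1,s_2)$ and the cellular basis span the same $\mathscr{A}$-module, so substituting the latter for the former in the image of $C$ (Definition \ref{D4.11}) produces the map $C'$ of Definition \ref{D5.3} and shows at once that its image is an $\mathscr{A}$-basis of $\mathscr{A}[A_k^{\mathbb{Z}_2}]$.

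Part (a) of the cell datum will follow by checking that refining the tabular poset $\Lambda$ lexicographically with the dominance orders on bi-partitions of $s_1$ and on partitions of $s_2$ yields a partial order on $\Lambda'$; this is immediate from Definition \ref{D5.1}. For part (b), I will define
\[
\bigl( C'^{m^{\lambda}_{s_{\lambda} t_{\lambda}} m^{\mu}_{s_{\mu} t_{\mu}}}_{(d',P),(d'',Q)} \bigr)^{\ast'} = C'^{m^{\lambda}_{t_{\lambda} s_{\lambda}} m^{\mu}_{t_{\mu} s_{\mu}}}_{(d'',Q),(d',P)},
\]
and verify that it is a well-defined involutary anti-automorphism by composing two compatible pieces: the tabular $\ast$ of Definition \ref{D4.12}, which flips the outer pair and inverts the hypergroup argument, and the cellular anti-involution on $(\mathbb{Z}_2 \wr \mathfrak{S}_{s_1}) \times \mathfrak{S}_{s_2}$ from Theorem \ref{T2.15}(iii), which sends $m^{\lambda}_{s_{\lambda} t_{\lambda}}$ to $m^{\lambda}_{t_{\lambda} s_{\lambda}}$ and likewise for $m^{\mu}$.

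The main obstacle is part (c), the multiplication rule modulo the lower cellular ideal. I will start from Theorem \ref{T4.14}(c), which gives, for any $a \in A_k^{\mathbb{Z}_2}$,
\[
a \cdot C^{((f,\sigma_1),\sigma_2)}_{(d',P),(d'',Q)} \equiv \sum_{(d'''_i,R_i)} C^{r_a[(d'''_i,R_i),(d',P)]\,((f,\sigma_1),\sigma_2)}_{(d'''_i,R_i),(d'',Q)} \pmod{A_k^{\mathbb{Z}_2}(<(r,(s_1,s_2)))},
\]
where $r_a[(d'''_i,R_i),(d',P)] \in \Gamma(s_1,s_2)$ is independent of $(d'',Q)$ and of $((f,\sigma_1),\sigma_2)$. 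Since $C$ is $\mathscr{A}$-linear in its hypergroup argument, I may substitute $((f,\sigma_1),\sigma_2)$ by the cellular element $m^{\lambda}_{s_{\lambda} t_{\lambda}} m^{\mu}_{s_{\mu} t_{\mu}}$. The cellular multiplication rule of Theorem \ref{T2.15} then expands
\[
r_a[(d'''_i,R_i),(d',P)] \cdot m^{\lambda}_{s_{\lambda} t_{\lambda}} m^{\mu}_{s_{\mu} t_{\mu}} \equiv \sum_{s'_{\lambda},s'_{\mu}} \alpha^{s'_{\lambda},s'_{\mu}}_{s_{\lambda},s_{\mu}}\!\bigl(r_a[(d'''_i,R_i),(d',P)]\bigr)\, m^{\lambda}_{s'_{\lambda} t_{\lambda}} m^{\mu}_{s'_{\mu} t_{\mu}}
\]
modulo the cellular ideal generated by bi-partitions and partitions strictly dominating $((\lambda_1,\lambda_2),\mu)$; crucially, the scalars $\alpha$ depend only on $a$, $(d'''_i,R_i)$, $(d',P)$ and the source tableaux, not on $(d'',Q)$ or on $(t_\lambda,t_\mu)$. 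Re-inserting via $C$ and absorbing both the lower-tabular terms from Theorem \ref{T4.14}(c) and the higher-dominance terms into $A_k^{\mathbb{Z}_2}(<(r,(s_1,s_2),((\lambda_1,\lambda_2),\mu)))$ yields (c)1. Case (c)2 is immediate because at $s_1=s_2=0$ the hypergroup is trivial and $C'$ coincides with $C$ there.

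The proof for $\mathscr{A}[\overrightarrow{A}_k^{\mathbb{Z}_2}]$ in part (ii) is identical mutatis mutandis, using $\widetilde{\Lambda}'$, $\widetilde{M}'$, $\widetilde{C}'$, $\widetilde{\ast}'$ and the tabular structure of Theorem \ref{T4.14}(ii); the only point to check is that the extra constraint in Definition \ref{D4.2}(ii) is preserved throughout, which is automatic since neither the tabular nor the cellular substitution alters $(\widetilde{d}', \widetilde{P})$, $(\widetilde{d}'', \widetilde{Q})$, or the horizontal-edge counts $H_e, H_{\mathbb{Z}_2}$.
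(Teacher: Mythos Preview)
Your proposal is correct and follows essentially the same approach as the paper: the paper's one-line proof invokes Theorem~4.2.1 of \cite{GM2} together with Lemma~\ref{L4.8} and Theorem~\ref{T4.14}, and what you have written is precisely an unpacking of that general tabular-to-cellular construction in the present setting. The only difference is presentational—the paper delegates to the cited theorem, while you verify the cell-datum axioms directly by substituting the cellular basis of $\mathscr{A}[(\mathbb{Z}_2\wr\mathfrak{S}_{s_1})\times\mathfrak{S}_{s_2}]$ into the tabular basis and invoking the tabular multiplication rule.
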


\begin{proof}
The proof follows from Theorem 4.2.1 of \cite{GM2}, Lemma \ref{L4.8} and  Theorem \ref{T4.14}.
\end{proof}

\begin{rem}
From (1.8) of \cite{GL}, $A_k^{\mathbb{Z}_2}\left( \overrightarrow{A}_k^{\mathbb{Z}_2}\right)$ is a cellular algebra over any field $K$ with cell datum $(\Lambda', M', C', \ast') \\ ((\widetilde{\Lambda}', \widetilde{M}', \widetilde{C}', \widetilde{\ast}'))$ where $(\Lambda', M', C', \ast')((\widetilde{\Lambda}', \widetilde{M}', \widetilde{C}', \widetilde{\ast}'))$ is as in Theorem \ref{T5.4}.
\end{rem}
\begin{cor}
Let $\mathbb{P}_{2k}(x^2)$ be the $\mathscr{A}$-
algebra defined in Definition \ref{D2.8}. Then
$\mathbb{P}_{2k}(x^2)$ has a cell datum $(\Lambda',
M', C', \ast')$ with $f = id$ and $s_2=0.$
\end{cor}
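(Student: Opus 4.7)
The plan is to derive the cell datum for $\mathbb{P}_{2k}(x^2)$ by restricting the cell datum $(\Lambda', M', C', \ast')$ of Theorem \ref{T5.4}(i) for $A_k^{\mathbb{Z}_2}(x)$ to the sub-indexing set on which the wreath cocycle is $f = \mathrm{id}$ and the number $s_2$ of $\mathbb{Z}_2$-through classes is zero (equivalently, every component of every diagram is an $\{e\}$-component). The earlier corollary to Theorem \ref{T4.14} already encodes this specialization at the tabular level; the present corollary upgrades that to a cell datum by pairing the diagrammatic data with the Murphy-type cellular basis of $\mathscr{A}[\mathfrak{S}_{s_1}]$ that sits inside Theorem \ref{T2.14}(i).

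First I would identify $\mathbb{P}_{2k}(x^2)$ inside $A_k^{\mathbb{Z}_2}(x)$. By Definition \ref{D2.7} it is generated by $F'_i, G_i, F''_i, F_j$, none of which identifies $(i,e)$ with $(i,g)$ for any $i$. Hence in any product of generators every connected component is an $\{e\}$-component, and every through class links $(i,e) \leftrightarrow (\sigma(i)', e)$ in parallel with $(i,g) \leftrightarrow (\sigma(i)', g)$. Translating this into Lemma \ref{L4.4}(i) and Definition \ref{D4.7} gives $s_2 = 0$ and $f = \mathrm{id}$, and the restricted map $C'$ of Definition \ref{D5.3}(i)---with indexing set the sub-poset $\Lambda'_0 = \{((r,(s_1,0)),((\lambda_1,\lambda_2),\Phi)) : |\lambda_1|+|\lambda_2| = s_1\}$ of $\Lambda'$ and Murphy basis labels $m^{\lambda}_{s_\lambda,t_\lambda}$ of $\mathscr{A}[\mathbb{Z}_2 \wr \mathfrak{S}_{s_1}]$ specialised at $f = \mathrm{id}$---has image an $\mathscr{A}$-basis of $\mathbb{P}_{2k}(x^2)$. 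The anti-involution $\ast$ of Definition \ref{D4.12}(i), which flips the diagram and inverts $((f,\sigma_1),\sigma_2)$, specialises on such tuples to the usual flip-of-diagram anti-involution of the partition algebra, so it preserves $\mathbb{P}_{2k}(x^2)$. The multiplicative axiom Theorem \ref{T5.4}(i)(c) then reads off: for $a \in \mathbb{P}_{2k}(x^2)$, Lemmas \ref{L4.8} and \ref{L4.9} expand $a \cdot C'^{m^\lambda_{s_\lambda,t_\lambda}}_{(d',P),(d'',Q)}$ with coefficients $r_a[(d''',P'''),(d',P)]$ independent of $(d'',Q)$ and lying in the restricted group algebra $\mathscr{A}[\mathfrak{S}_{s_1}]$, modulo lower terms in the induced partial order on $\Lambda'_0$.

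The main obstacle is closure: one must show that the set of diagrams with $f = \mathrm{id}$ and $s_2 = 0$ is closed under multiplication in $A_k^{\mathbb{Z}_2}(x)$, so that the restricted straightening formula stays in the subalgebra. This is a purely diagrammatic observation---if neither factor contains a component joining $(i,e)$ to $(i,g)$ for any $i$, then vertical concatenation followed by contraction of the middle row cannot create such a component; and if every through class of each factor is parallel, no cross-swap $e \leftrightarrow g$ can be produced in the composite through classes. Once this closure is checked, axioms (C1)-(C3) of Graham-Lehrer for $\mathbb{P}_{2k}(x^2)$ follow by pure restriction from Theorem \ref{T5.4}(i), establishing the corollary; the statement that $f = \mathrm{id}$ and $s_2 = 0$ is precisely the parameter regime to which we have restricted.
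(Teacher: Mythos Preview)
Your overall strategy---specialize the construction of Theorem \ref{T5.4}(i) to the subalgebra where every component is an $\{e\}$-component and the twist $f$ is trivial---is exactly what the paper intends; the paper states this corollary without proof, treating it as an immediate consequence of Theorem \ref{T5.4} and the earlier corollary to Theorem \ref{T4.14}. Your diagrammatic closure argument is correct and fills in detail the paper omits.

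There is, however, a genuine confusion in your indexing. You keep the \emph{bipartition} labels, writing the restricted poset as $\{((r,(s_1,0)),((\lambda_1,\lambda_2),\Phi))\}$, and speak of the ``Murphy basis labels $m^{\lambda}_{s_\lambda,t_\lambda}$ of $\mathscr{A}[\mathbb{Z}_2 \wr \mathfrak{S}_{s_1}]$ specialised at $f = \mathrm{id}$''. This does not make sense: the cellular basis elements of $\mathbb{Z}_2 \wr \mathfrak{S}_{s_1}$ from Theorem \ref{T2.14}(i) are sums over wreath-product elements with varying $f$, so they do not lie in the span of $\{(\mathrm{id},\sigma):\sigma\in\mathfrak{S}_{s_1}\}$, and the image of your ``restricted $C'$'' does not land in $\mathbb{P}_{2k}(x^2)$. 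The meaning of ``$f=\mathrm{id}$'' in the corollary to Theorem \ref{T4.14} is that the hypergroup $B(s_1,0)$ collapses from $\mathbb{Z}_2\wr\mathfrak{S}_{s_1}$ to $\mathfrak{S}_{s_1}$; the cell datum built on top of this via the Green--Martin mechanism then uses the Murphy basis of $\mathscr{A}[\mathfrak{S}_{s_1}]$, indexed by \emph{single} partitions $\lambda\vdash s_1$, recovering Xi's cell datum \cite{X} for the partition algebra. So ``with $f=\mathrm{id}$ and $s_2=0$'' should be read as ``rerun the construction with these substitutions throughout'', not ``restrict $\Lambda'$ and $C'$ literally''. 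Once you replace bipartitions by partitions and $\mathbb{Z}_2\wr\mathfrak{S}_{s_1}$ by $\mathfrak{S}_{s_1}$, your argument goes through.
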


\section{\textbf{Modular Representations of the algebra of $\mathbb{Z}_2$ relations and signed partition algebras}}

In this section, we give a description of the complete set of irreducible modules for the algebra of $\mathbb{Z}_2$ relations $A_k^{\mathbb{Z}_2}$ and signed partition algebras $\overrightarrow{A}_k^{\mathbb{Z}_2}$over any field.

\begin{defn} \label{D6.1}
Let $r = 2s_1 + s_2.$ For $0 \leq r \leq 2k$ and $((r, (s_1, s_2)), ((\lambda_1, \lambda_2), \mu)) \in \Lambda' \\ \left(((r, (s_1, s_2)), ((\lambda_1, \lambda_2), \mu)) \in \widetilde{\Lambda}' \right),$ put $\lambda = (\lambda_1, \lambda_2).$

The left cell module $W \left[(r, (s_1, s_2)), ((\lambda_1, \lambda_2), \mu) \right]\left(\widetilde{W} \left[(r, (s_1, s_2)), ((\lambda_1, \lambda_2), \mu) \right] \right)$ for the cellular algebra $\mathscr{A} \left[ A_k^{\mathbb{Z}_2}\right]\left(\mathscr{A} \left[ \overrightarrow{A}_k^{\mathbb{Z}_2}\right] \right)$ is defined as follows:

\begin{enumerate}
  \item[(i)] $W \left[(r, (s_1, s_2)), ((\lambda_1, \lambda_2), \mu) \right]$ is a free $\mathscr{A}$-module with basis

\centerline{$\left\{C_{S}^{C_{s_1, s_2}(s)} = C'^{m^{\lambda}_{s_{\lambda}}m^{\mu}_{s_{\mu}}}_{S} \ \Big| \ S = (d, P), s =  ((s_{\lambda_1}, s_{\lambda_2}), s_{\mu}) \in M' \left[(r, (s_1, s_2)), ((\lambda_1, \lambda_2), \mu) \right]\right\}$}
and $A_k^{\mathbb{Z}_2}$-action is defined on the basis element  by $a$

$a C'^{m^{\lambda}_{s_{\lambda}} m^{\mu}_{s_{\mu}}}_{S} \equiv \underset{ (S', s') \in M' \Big[ \big(r, (
s_1, s_2) , ((\lambda_1, \lambda_2), \mu)\big) \Big]}\sum  C^{r_a(S', S) m^{\lambda}_{s'_{\lambda}} m^{\mu}_{s'_{\mu}}}_{S'}  \ \ \text{ mod } A_k^{\mathbb{Z}_2}
\Big( < \big(r, (s_1, s_2), ((\lambda_1, \lambda_2), \mu)
\big)\Big),$

\NI where $(S, s) = ((d, P), ((s_{\lambda_1}, s_{\lambda_2}), s_{\mu})), (S', s') = ((d', P'), ((s'_{\lambda_1}, s'_{\lambda_2}), s'_{\mu}))$  $r_a(S', S)$ is as in 3(a)(i) and (b)(i) of Theorem \ref{T5.4}.

  \item[(ii)] \NI $W \left[(r, (s_1, s_2)), ((\lambda_1, \lambda_2), \mu) \right]$ is a free $\mathscr{A}$-module with basis

\centerline{$\left\{\widetilde{C}_{\widetilde{S}}^{\widetilde{C}_{s_1, s_2}(s)} = \widetilde{C}'^{m^{\lambda}_{s_{\lambda}} m^{\mu}_{s_{\mu}}}_{\widetilde{S}} \ \Big| \ \widetilde{S} = (\widetilde{d}, \widetilde{P}), s =  ((s_{\lambda_1}, s_{\lambda_2}), s_{\mu}) \in \widetilde{M}' \left[(r, (s_1, s_2)), ((\lambda_1, \lambda_2), \mu) \right]\right\}$}
and $\overrightarrow{A}_k^{\mathbb{Z}_2}$-action is defined on the basis element  by $\widetilde{a}$

$\widetilde{a} \widetilde{C}'^{m^{\lambda}_{s_{\lambda}}m^{\mu}_{s_{\mu}}}_{\widetilde{S}} \equiv \underset{ (\widetilde{S}', s') \in \widetilde{M}' \Big[ \big(r, (
s_1, s_2) , ((\lambda_1, \lambda_2), \mu)\big) \Big]}\sum  \widetilde{C}^{r_{\widetilde{a}}(\widetilde{S}', \widetilde{S}) m^{\lambda}_{s'_{\lambda}} m^{\mu}_{s'_{\mu}}}_{\widetilde{S}'}  \ \ \text{ mod } \overrightarrow{A}_k^{\mathbb{Z}_2}
\Big( < \big(r, (s_1, s_2), ((\lambda_1, \lambda_2), \mu)
\big)\Big),$

\NI where $(\widetilde{S}, s) = ((\widetilde{d}, \widetilde{P}), ((s_{\lambda_1}, s_{\lambda_2}), s_{\mu})), (\widetilde{S}', s') = ((\widetilde{d}', \widetilde{P}'), ((s'_{\lambda_1}, s'_{\lambda_2}), s'_{\mu}))$  $r_a(\widetilde{S}', \widetilde{S})$ is as in 3(a)(ii) and (b)(ii) of Theorem \ref{T5.4}.

\end{enumerate}

\end{defn}
\begin{lem}\label{L6.2}
\begin{itemize}
  \item[(i)]  $C'^{m^{\lambda}_{s_{\lambda}, s_{\lambda}} m^{\mu}_{s_{\mu}, s_{\mu}}}_{S, S} \ C'^{m^{\lambda}_{t_{\lambda}, t_{\lambda}} m^{\mu}_{t_{\mu}, t_{\mu}}}_{T, T} \ \equiv \ \Phi_1((S, s), (T, t)) \ C'^{m^{\lambda}_{s_{\lambda}, t_{\lambda}} m^{\mu}_{s_{\mu}, t_{\mu}}}_{S, T} \ $

       $\hspace{10cm} \text{ mod } \left[ \tiny{ A_k^{\mathbb{Z}_2} <(r, (s_1, s_2), ((\lambda_1, \lambda_2), \mu)}\right]$

     \NI where

       $\begin{array}{llll}
      \Phi_1((S, s),  (T, t)) & = & x^{l(P \vee P')} \phi^{\lambda}_{\delta_1}(s_{\lambda}, t_{\lambda}) \ \phi^{\mu}_{\delta_2}(s_{\mu}, t_{\mu})& \text{ when conditions (a) and (b)}  \\
      & & & \text{ of Definition } \ref{D4.7} \text{ are satisfied}\\
      & = &  0 & \text{Otherwise}
      \end{array}$
  \item[(ii)]  $\widetilde{C}'^{m^{\lambda}_{s_{\lambda}, s_{\lambda}} m^{\mu}_{s_{\mu}, s_{\mu}}}_{\widetilde{S}, \widetilde{S}} \ \widetilde{C}'^{m^{\lambda}_{t_{\lambda}, t_{\lambda}} m^{\mu}_{t_{\mu}, t_{\mu}}}_{\widetilde{T}, \widetilde{T}} \ \equiv \ \Phi_1((\widetilde{S}, s), (\widetilde{T}, t)) \ \widetilde{C}'^{m^{\lambda}_{s_{\lambda}, t_{\lambda}} m^{\mu}_{s_{\mu}, t_{\mu}}}_{\widetilde{S}, \widetilde{T}} \ \ \ \ \ \text{ mod } \left[ \overrightarrow{A}_k^{\mathbb{Z}_2} <(r, (s_1, s_2), ((\lambda_1, \lambda_2), \mu)\right]$
     \NI where

       $\begin{array}{llll}
      \Phi_1((\widetilde{S}, s),  (\widetilde{T}, t)) & = & x^{l(\widetilde{P} \vee \widetilde{P}')} \phi^{\lambda}_{\delta_1}(s_{\lambda}, t_{\lambda}) \ \phi^{\mu}_{\delta_2}(s_{\mu}, t_{\mu})& \text{ when conditions (a) and (b)}  \\
      & & & \text{ of Definition } \ref{D4.7} \text{ are satisfied}\\
      & = &  0 & \text{Otherwise}
      \end{array}$

\end{itemize}

        \NI  $(S, s) = ((d, P), ((s_{\lambda_1}, s_{\lambda_2}), s_{\mu})), (\widetilde{S}, s) = ((\widetilde{d}, \widetilde{P}), ((s_{\lambda_1}, s_{\lambda_2}), s_{\mu})), (T, t) = ((d', P'), ((t_{\lambda_1}, t_{\lambda_2}), t_{\mu})), (\widetilde{T}, t) = ((\widetilde{d}', \widetilde{P}'), ((t_{\lambda_1}, t_{\lambda_2}), t_{\mu})), l(P \vee P')\left( l(\widetilde{P} \vee \widetilde{P}')\right) $ denotes the number of

          \NI connected components in $ d'.d''\left( \widetilde{d}' . \widetilde{d}''\right)$  excluding the union of all the connected components of $ P \text{ and } P' \left(  \widetilde{P} \text{ and } \widetilde{P}' \right)$,

          \NI $ m^{\lambda}_{s_{\lambda}, s_{\lambda}} \delta_{1} m^{\lambda}_{t_{\lambda}, t_{\lambda}} \equiv \phi^{\lambda}_{\delta_1}(s_{\lambda}, t_{\lambda}) m^{\lambda}_{s'_{\lambda}, t_{\lambda}}  \text{mod } \mathscr{H} \left(< (\lambda_1, \lambda_2)\right), m^{\mu}_{s_{\mu}, s_{\mu}} \delta_2 m^{\mu}_{t_{\mu}, t_{\mu}} \equiv \phi^{\mu}_{\delta_2}(s_{\mu}, t_{\mu})m^{\mu}_{s'_{\mu}, t_{\mu}}  \text{mod } \mathscr{H}' \left( < \mu \right)$

         \NI $\text{ as in Lemma 1.7 \cite{GL}}.$
\end{lem}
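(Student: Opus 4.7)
The plan is to compute the product by first applying the tabular multiplication formula of Lemma~\ref{L4.8}, and then reducing the resulting hyper-group middle factor using the cellular structure of $\mathscr{A}[\mathbb{Z}_2\wr\mathfrak{S}_{s_1}]$ and $\mathscr{A}[\mathfrak{S}_{s_2}]$ recorded in Theorem~\ref{T2.14} together with Lemma~1.7 of \cite{GL}. Both products $C'^{m^\lambda_{s_\lambda,s_\lambda}m^\mu_{s_\mu,s_\mu}}_{S,S}$ and $C'^{m^\lambda_{t_\lambda,t_\lambda}m^\mu_{t_\mu,t_\mu}}_{T,T}$ lie in the tabular layer indexed by $(r,(s_1,s_2))$, so Lemma~\ref{L4.8} applies at that same layer and any defect propagating number yields a term in $A_k^{\mathbb{Z}_2}(<(r,(s_1,s_2)))$, which is absorbed in the congruence.

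First I would expand $C^{m^\lambda_{s_\lambda,s_\lambda}m^\mu_{s_\mu,s_\mu}}_{S,S}\,C^{m^\lambda_{t_\lambda,t_\lambda}m^\mu_{t_\mu,t_\mu}}_{T,T}$ bilinearly over the elements $\delta_1\otimes\delta_2\in B(s_1,s_2)$ appearing in the two hyper-group factors. For each such pure tensor, Lemma~\ref{L4.8}(i) gives
\[
C^{\delta_1\otimes\delta_2}_{S,S}\,C^{\delta_1'\otimes\delta_2'}_{T,T}\;\equiv\;C^{(\delta_1\otimes\delta_2)\,\phi^{r}_{s_1,s_2}[(d,P),(d',P')]\,(\delta_1'\otimes\delta_2')}_{S,T}\pmod{A_k^{\mathbb{Z}_2}(<(r,(s_1,s_2)))}.
\]
By Definition~\ref{D4.7}, the factor $\phi^{r}_{s_1,s_2}[(d,P),(d',P')]$ is either $0$, in which case the whole contribution vanishes (yielding the ``Otherwise'' branch of the statement), or equals $x^{l(P\vee P')}((f,\sigma_1),\sigma_2)$ under conditions (a) and (b). In the latter case, collecting over all $\delta_i,\delta_i'$ produces the single middle factor
\[
\bigl(m^\lambda_{s_\lambda,s_\lambda}m^\mu_{s_\mu,s_\mu}\bigr)\,x^{l(P\vee P')}(\delta_1\otimes\delta_2)\,\bigl(m^\lambda_{t_\lambda,t_\lambda}m^\mu_{t_\mu,t_\mu}\bigr),
\]
with $\delta_1=(f,\sigma_1)$ and $\delta_2=\sigma_2$, placed inside $C_{S,T}$.

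Next, because the two tensor factors commute, this middle element splits as
\[
x^{l(P\vee P')}\,\bigl(m^\lambda_{s_\lambda,s_\lambda}\delta_1 m^\lambda_{t_\lambda,t_\lambda}\bigr)\otimes\bigl(m^\mu_{s_\mu,s_\mu}\delta_2 m^\mu_{t_\mu,t_\mu}\bigr).
\]
Applying Lemma~1.7 of \cite{GL} to each cellular group algebra yields
\[
m^\lambda_{s_\lambda,s_\lambda}\delta_1 m^\lambda_{t_\lambda,t_\lambda}\equiv\phi^\lambda_{\delta_1}(s_\lambda,t_\lambda)\,m^\lambda_{s_\lambda,t_\lambda}\pmod{\mathscr{H}(<\lambda)},\qquad
m^\mu_{s_\mu,s_\mu}\delta_2 m^\mu_{t_\mu,t_\mu}\equiv\phi^\mu_{\delta_2}(s_\mu,t_\mu)\,m^\mu_{s_\mu,t_\mu}\pmod{\mathscr{H}'(<\mu)}.
\]
Substituting this back into the expression for $C_{S,T}$ and observing that terms coming from $\mathscr{H}(<\lambda)$ or $\mathscr{H}'(<\mu)$ feed through the map $C'$ into elements of $A_k^{\mathbb{Z}_2}(<(r,(s_1,s_2),((\lambda_1,\lambda_2),\mu)))$ (by the ordering of Definition~\ref{D5.1} and the definition of $C'$ in Definition~\ref{D5.3}), one obtains the asserted formula in case~(i). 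Case~(ii) is identical with the tilde decorations, using Lemma~\ref{L4.8}(ii) in place of Lemma~\ref{L4.8}(i).

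The main obstacle is not the algebraic manipulation, which is essentially bookkeeping, but the verification that the error terms remain in the correct ideal. Specifically, one must check: (1)~defect propagating number $<2s_1+s_2$ contributions from Lemma~\ref{L4.8} land in $A_k^{\mathbb{Z}_2}(<(r,(s_1,s_2)))$, hence in the smaller ideal indexed by the finer poset of Definition~\ref{D5.1}; (2)~the lower-bi-partition or lower-partition error terms from the cellularity reductions lift via $C'$ to elements whose $((\lambda_1,\lambda_2),\mu)$-label strictly decreases in the dominance order, again landing in the required ideal. Both points are straightforward consequences of how the poset $\Lambda'$ refines $\Lambda$, but they need to be spelled out carefully to conclude the congruence as stated.
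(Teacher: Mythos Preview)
Your proposal is correct and follows essentially the same route as the paper: apply the tabular multiplication (Lemma~\ref{L4.8} together with Definition~\ref{D4.7}) to pull out the middle factor $x^{l(P\vee P')}(\delta_1,\delta_2)$, then reduce $m^\lambda_{s_\lambda,s_\lambda}\delta_1 m^\lambda_{t_\lambda,t_\lambda}$ and $m^\mu_{s_\mu,s_\mu}\delta_2 m^\mu_{t_\mu,t_\mu}$ separately via Lemma~1.7 of \cite{GL}. Your handling of the error terms (defect propagating number and lower-dominance contributions landing in the ideal for $\Lambda'$) is in fact more explicit than the paper's own proof, which writes the intermediate steps as equalities and leaves the modular bookkeeping implicit.
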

\begin{proof}
\NI \textbf{Proof of (i):}
Consider the product
\begin{eqnarray*}
% \nonumber to remove numbering (before each equation)
   C'^{m^{\lambda}_{s_{\lambda}, s_{\lambda}} m^{\mu}_{s_{\mu}, s_{\mu}}}_{S, S} \ C'^{m^{\lambda}_{t_{\lambda}, t_{\lambda}} m^{\mu}_{t_{\mu}, t_{\mu}}}_{T, T}  &=&  x^{l(P \vee P')} C'^{m^{\lambda}_{s_{\lambda}, s_{\lambda}} m^{\mu}_{s_{\mu}, s_{\mu}} (\delta_1, \delta_2) m^{\lambda}_{t_{\lambda}, t_{\lambda}} m^{\mu}_{t_{\mu}, t_{\mu}}}_{S, T}
\end{eqnarray*}

where $\phi^r_{s_1, s_2}((d, P), (d', P')) = x^{l(P \vee P')} (\delta_1, \delta_2)$ is as in Definition \ref{D4.7},

We know that,

\begin{eqnarray}
% \nonumber to remove numbering (before each equation)
 m^{\lambda}_{s_{\lambda}, s_{\lambda}} m^{\mu}_{s_{\mu}, s_{\mu}} (\delta_1, \delta_2) m^{\lambda}_{t_{\lambda}, t_{\lambda}} m^{\mu}_{t_{\mu}, t_{\mu}}  &=& m^{\lambda}_{s_{\lambda}, s_{\lambda}} \delta_1 m^{\lambda}_{t_{\lambda}, t_{\lambda}} m^{\mu}_{s_{\mu}, s_{\mu}} \delta_2 m^{\mu}_{t_{\mu}, t_{\mu}} \\\nonumber
   &=& \phi^{\lambda}_{\delta_1}(s_{\lambda}, t_{\lambda}) m^{\lambda}_{s'_{\lambda}, t_{\lambda}} \phi^{\mu}_{\delta_2}(s_{\mu}, t_{\mu}) m^{\mu}_{s'_{\mu}, t_{\mu}} \\\nonumber
   &=& \phi^{\lambda}_{\delta_1}(s_{\lambda}, t_{\lambda})\ \phi^{\mu}_{\delta_2}(s_{\mu}, t_{\mu}) \ m^{\lambda}_{s'_{\lambda}, t_{\lambda}} m^{\mu}_{s'_{\mu}, t_{\mu}}
   \end{eqnarray}
   where \NI $m^{\lambda}_{s_{\lambda}, s_{\lambda}} \delta_{1} m^{\lambda}_{t_{\lambda}, t_{\lambda}} \equiv \phi^{\lambda}_{\delta_1}(s_{\lambda}, t_{\lambda}) m^{\lambda}_{s'_{\lambda}, t_{\lambda}} \ \ \text{mod } \mathscr{H} \left(< (\lambda_1, \lambda_2)\right),$

\NI $m^{\mu}_{s_{\mu}, s_{\mu}} \delta_2 m^{\mu}_{t_{\mu}, t_{\mu}} \equiv \phi^{\mu}_{\delta_2}(s_{\mu}, t_{\mu})m^{\mu}_{s'_{\mu}, t_{\mu}} \ \ \text{mod } \mathscr{H}' \left( < \mu \right). $

   Substitute the above in the product $C'^{m^{\lambda}_{s_{\lambda}, s_{\lambda}} m^{\mu}_{s_{\mu}, s_{\mu}}}_{S, S} \ C'^{m^{\lambda}_{t_{\lambda}, t_{\lambda}} m^{\mu}_{t_{\mu}, t_{\mu}}}_{T, T}$ we get,
   \begin{eqnarray*}
   % \nonumber to remove numbering (before each equation)
     C'^{m^{\lambda}_{s_{\lambda}, s_{\lambda}} m^{\mu}_{s_{\mu}, s_{\mu}}}_{S, S} \ C'^{m^{\lambda}_{t_{\lambda}, t_{\lambda}} m^{\mu}_{t_{\mu}, t_{\mu}}}_{T, T} &=& x^{l(P \vee P')} \phi^{\lambda}_{\delta_1}(s_{\lambda}, t_{\lambda})\ \phi^{\mu}_{\delta_2}(s_{\mu}, t_{\mu}) C'^{m^{\lambda}_{s'_{\lambda}, t_{\lambda}} m^{\mu}_{s'_{\mu}, t_{\mu}} }_{S, T} \\
      &=& \Phi_1((S, s), (T, t)) \ C^{m^{\lambda}_{s'_{\lambda}, t_{\lambda}} m^{\mu}_{s'_{\mu}, t_{\mu}}}_{S, T}
        \end{eqnarray*}
        where $\Phi_1((S, s), (T, t)) = x^{l(P \vee P')} \phi^{\lambda}_{\delta_1}(s_{\lambda}, t_{\lambda})\ \phi^{\mu}_{\delta_2}(s_{\mu}, t_{\mu}).$

\textbf{Proof of (ii):} Proof of (ii) is same as  proof of (i).
\end{proof}

\begin{defn}\label{D6.3}
For $\big(r, (s_1, s_2), ((\lambda_1, \lambda_2), \mu)\big) \in
\Lambda'\left(\big(r, (s_1, s_2), ((\lambda_1, \lambda_2), \mu)\big) \in
\widetilde{\Lambda}' \right),$  the bilinear map $\phi_{s_1, s_2}^{\lambda, \mu} \\ \left( \widetilde{\phi}_{s_1, s_2}^{\lambda, \mu}\right)$ is
defined as
\begin{enumerate}
  \item[(i)] $ \phi_{s_1, s_2}^{\lambda, \mu} \big(  C'^{m^{\lambda}_{s_{\lambda}, s_{\lambda}} m^{\mu}_{s_{\mu}, s_{\mu}}}_{(d, P)},   C'^{m^{\lambda}_{t_{\lambda}, t_{\lambda}} m^{\mu}_{t_{\mu}, t_{\mu}}}_{(d', P')} \big) = \Phi_1((S, s), (T, t)), \ \ (S, s), (T, t) \in
M' \big[r, (s_1, s_2), ((\lambda_1, \lambda_2), \mu) \big] $
  \item[(ii)] $ \widetilde{\phi}_{s_1, s_2}^{\lambda, \mu} \big(  \widetilde{C}'^{m^{\lambda}_{s_{\lambda}, s_{\lambda}} m^{\mu}_{s_{\mu}, s_{\mu}}}_{(\widetilde{d}, \widetilde{P})},   \widetilde{C}'^{m^{\lambda}_{t_{\lambda}, t_{\lambda}} m^{\mu}_{t_{\mu}, t_{\mu}}}_{(\widetilde{d}', \widetilde{P}')} \big) = \Phi_1((\widetilde{S}, s), (\widetilde{T}, t)), \ \ (\widetilde{S}, s), (\widetilde{T}, t) \in
\widetilde{M}' \big[r, (s_1, s_2), ((\lambda_1, \lambda_2), \mu) \big] $
\end{enumerate}

\NI where $\Phi_1((S, s), (T, t))\left(\widetilde{\Phi}_1((\widetilde{S}, s), (\widetilde{T}, t)) \right)$ is as in Lemma \ref{L6.2}.

Put
\begin{enumerate}
  \item[(i)] $G^{\lambda, \mu}_{2s_1+s_2} = \left(\Phi_1((S, s), (T, t)) \right)_{(S, s), (T, t) \in M'\big[r, (s_1, s_2), ((\lambda_1, \lambda_2), \mu) \big]}$

      where

 $\begin{array}{llll}
      \Phi_1((S, s), (T, t)) & = & x^{l(P_i \vee P_j)} \phi^{\lambda}_{\delta_1}(s_{\lambda}, t_{\lambda}) \ \phi^{\mu}_{\delta_2}(s_{\mu}, t_{\mu})& \text{ when conditions (a) and (b)}  \\
      & & & \text{ of Definition } \ref{D4.7} \text{ are satisfied}\\
       & = &  0 & \text{Otherwise}
      \end{array}$

\NI where $(S, s) = ((d_i, P_i), ((s_{\lambda_1}, s_{\lambda_2}), s_{\mu})), (T, t) = ((d_j, P_j), ((t_{\lambda_1}, t_{\lambda_2}), t_{\mu}))$

  \item[(ii)]$\widetilde{G}^{\lambda, \mu}_{2s_1+s_2} = \left(\widetilde{\Phi}_1((\widetilde{S}, s), (\widetilde{T}, t)) \right)_{(\widetilde{S}, s), (\widetilde{T}, t) \in \widetilde{M}'\big[r, (s_1, s_2), ((\lambda_1, \lambda_2), \mu) \big]}$

      where

 $\begin{array}{llll}
      \widetilde{\Phi}_1((\widetilde{S}, s), (\widetilde{T}, t)) & = & x^{l(\widetilde{P}_i \vee \widetilde{P}_j)} \phi^{\lambda}_{\delta_1}(s_{\lambda}, t_{\lambda}) \ \phi^{\mu}_{\delta_2}(s_{\mu}, t_{\mu})& \text{ when conditions (a) and (b)}  \\
      & & & \text{ of Definition } \ref{D4.7} \text{ are satisfied}\\
       & = &  0 & \text{Otherwise}
      \end{array}$

\NI where $(\widetilde{S}, s) = ((\widetilde{d}_i, \widetilde{P}_i), ((s_{\lambda_1}, s_{\lambda_2}), s_{\mu})), (\widetilde{T}, t) = ((\widetilde{d}_j, \widetilde{P}_j), ((t_{\lambda_1}, t_{\lambda_2}), t_{\mu})),$
\end{enumerate}

        \NI $l(P_i \vee P_j)\left( l(\widetilde{P}_i \vee \widetilde{P}_j)\right) $ denotes the number of  connected components in $ d'.d''\left(\widetilde{d}' . \widetilde{d}'' \right)$  excluding the union of all the connected components of $ P_i \text{ and } P_j\left( \widetilde{P}_i \text{ and } \widetilde{P}_j\right) $,

          \NI $ m^{\lambda}_{s_{\lambda}, s_{\lambda}} \delta_{1} m^{\lambda}_{t_{\lambda}, t_{\lambda}} \equiv \phi^{\lambda}_{\delta_1}(s_{\lambda}, t_{\lambda}) m^{\lambda}_{s'_{\lambda}, t_{\lambda}}  \text{mod } \mathscr{H} \left(< (\lambda_1, \lambda_2)\right), m^{\mu}_{s_{\mu}, s_{\mu}} \delta_2 m^{\mu}_{t_{\mu}, t_{\mu}} \equiv \phi^{\mu}_{\delta_2}(s_{\mu}, t_{\mu})m^{\mu}_{s'_{\mu}, t_{\mu}}  \text{mod } \mathscr{H}' \left( < \mu \right)$

         \NI $\text{ as in Lemma 1.7 \cite{GL}}.$

    \NI  $G^{\lambda, \mu}_{2s_1+s_2}\left( \widetilde{G}^{\lambda, \mu}_{2s_1+s_2}\right)$ is called the \textbf{Gram matrix of the cell module} $W \left[(r, (s_1, s_2)), ((\lambda_1, \lambda_2), \mu)\right] \\ \left( \widetilde{W} \left[(r, (s_1, s_2)), ((\lambda_1, \lambda_2), \mu)\right] \right).$

\end{defn}

\begin{defn}\label{D6.4}
For $\big(r, (s_1, s_2), ((\lambda_1, \lambda_2), \mu) \big) \in
\Lambda'\left( \big(r, (s_1, s_2), ((\lambda_1, \lambda_2), \mu) \big) \in
\widetilde{\Lambda}'\right),$ define

\begin{enumerate}
  \item[(i)] $Rad \big(W \big[r, (s_1, s_2), ((\lambda_1, \lambda_2), \mu)
\big]\big) = \big\{ x \in W \big[r, (s_1, s_2), ((\lambda_1,
\lambda_2), \mu) \big] \ | \ $

$ \hspace{5cm} \phi_{s_1, s_2}^{\lambda, \mu} (x, y) =0 \ \ \forall \
y \in W \big[r, (s_1, s_2), ((\lambda_1, \lambda_2), \mu) \big]
\big\},$
  \item[(ii)] $Rad \big(\widetilde{W} \big[r, (s_1, s_2), ((\lambda_1, \lambda_2), \mu)
\big]\big) = \big\{ x \in \widetilde{W} \big[r, (s_1, s_2), ((\lambda_1,
\lambda_2), \mu) \big] \ | \ $

$ \hspace{5cm} \widetilde{\phi}_{s_1, s_2}^{\lambda, \mu} (x, y) =0 \ \ \forall \
y \in \widetilde{W} \big[r, (s_1, s_2), ((\lambda_1, \lambda_2), \mu) \big]
\big\},$
\end{enumerate}

\NI where $(S, s) = ((d, P), ((s_{\lambda_1}, s_{\lambda_2}), s_{\mu})), (\widetilde{S}, s) = ((\widetilde{d}, \widetilde{P}), ((s_{\lambda_1}, s_{\lambda_2}), s_{\mu})), (T, t) = ((d', P'), ((t_{\lambda_1}, t_{\lambda_2}), t_{\mu}))$ and $(\widetilde{T}, t) = ((\widetilde{d}', \widetilde{P}'), ((t_{\lambda_1}, t_{\lambda_2}), t_{\mu})).$
\end{defn}
\begin{notn}\label{N6.5}

Let \begin{enumerate}
      \item[(i)] $\Lambda'_0  = \{(r, (s_1, s_2), ((\lambda_1, \lambda_2), \mu))
\in \Lambda' \ | \ \phi^{\lambda, \mu}_{s_1, s_2} \neq 0 \}.$
      \item[(ii)]$\widetilde{\Lambda}'_0  = \{(r, (s_1, s_2), ((\lambda_1, \lambda_2), \mu))
\in \widetilde{\Lambda}' \ | \ \widetilde{\phi}^{\lambda, \mu}_{s_1, s_2} \neq 0 \}.$
    \end{enumerate}
\end{notn}

\begin{thm}\label{T6.6}
Let $\mathbb{K}(x)$ be a field.
  For $(r, (s_1, s_2), ((\lambda_1, \lambda_2), \mu)) \in \Lambda'_0\left( (r, (s_1, s_2), ((\lambda_1, \lambda_2), \mu)) \in \widetilde{\Lambda}'_0\right), $

  let \begin{enumerate}
        \item[(i)]  $D^{(r, (s_1, s_2), ((\lambda_1, \lambda_2), \mu)} =\ds \frac{W
\big[ r, (s_1, s_2), ((\lambda_1, \lambda_2), \mu)\big]}{\text{Rad
} \big(W \big[ r, (s_1, s_2), ((\lambda_1, \lambda_2),
\mu)\big]\big)}$,
        \item[(ii)]  $\widetilde{D}^{(r, (s_1, s_2), ((\lambda_1, \lambda_2), \mu)} =\ds \frac{\widetilde{W}
\big[ r, (s_1, s_2), ((\lambda_1, \lambda_2), \mu)\big]}{\text{Rad
} \big(\widetilde{W} \big[ r, (s_1, s_2), ((\lambda_1, \lambda_2),
\mu)\big]\big)}$.

      \end{enumerate}

\begin{enumerate}
  \item [(a)]  $D^{(r, (s_1, s_2), ((\lambda_1, \lambda_2), \mu)} \neq 0 \left( \widetilde{D}^{(r, (s_1, s_2), ((\lambda_1, \lambda_2), \mu)} \neq 0\right)$  if and only if  $\lambda = (\lambda_1, \lambda_2)$ is $p$- restricted and $\mu$ is $p$- restricted and it is absolutely irreducible over a field of characteristic p.
      \item [$(a)'$]  $D^{(r, (s_1, s_2), ((\lambda_1, \lambda_2), \mu)} \neq 0 \left( \widetilde{D}^{(r, (s_1, s_2), ((\lambda_1, \lambda_2), \mu)} \neq 0\right)$   and it is  absolutely irreducible over a field of \\ characteristic 0.
  \item [(b)] $D^{(r, (s_1, 0), (\lambda_1, \lambda_2)} \neq 0 \left( \widetilde{D}^{(r, (s_1, 0), (\lambda_1, \lambda_2)} \neq 0\right)$ if and only if  $\lambda = (\lambda_1, \lambda_2)$ is $p$- restricted  and it is \\ absolutely  irreducible over a  field of characteristic p.
      \item [$(b)'$] $D^{(r, (s_1, 0), (\lambda_1, \lambda_2)} \neq 0 \left( \widetilde{D}^{(r, (s_1, 0), (\lambda_1, \lambda_2)} \neq 0\right)$  and it is absolutely  irreducible over a  field of \\ characteristic 0.
  \item[(c)] $D^{(r, (0, s_2),  \mu)} \neq 0 \left( \widetilde{D}^{(r, (0, s_2),  \mu)} \neq 0\right)$  if and only if  $\mu$ is $p$- restricted and it is absolutely irreducible over a  field of characteristic p.
  \item[$(c)'$] $D^{(r, (0, s_2),  \mu)} \neq 0 \left( \widetilde{D}^{(r, (0, s_2),  \mu)} \neq 0\right)$   and it is absolutely irreducible over a  field of characteristic 0.
  \item[(d)] $D^{(0, \Phi)}\left( \widetilde{D}^{(0, \Phi)}\right) $ is non-zero and it is absolutely irreducible over a field of characteristic 0.
  \end{enumerate}

 \end{thm}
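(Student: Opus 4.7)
The plan is to derive Theorem \ref{T6.6} by combining the general Graham--Lehrer theory of cellular algebras with the modular representation theory of the wreath product $\mathbb{Z}_2 \wr \mathfrak{S}_{s_1}$ and the symmetric group $\mathfrak{S}_{s_2}$. The first step is to invoke Theorem \ref{T5.4}, which establishes the cellularity of $\mathscr{A}[A_k^{\mathbb{Z}_2}]$ and $\mathscr{A}[\overrightarrow{A}_k^{\mathbb{Z}_2}]$. By the standard results of \cite{GL} (Theorem 3.4 together with Remark 3.10), for any cellular algebra the quotient $W(\lambda)/\mathrm{Rad}(W(\lambda))$ is either zero or absolutely irreducible, and the non-zero ones form a complete set of pairwise non-isomorphic absolutely irreducible modules. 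This immediately yields the absolute-irreducibility part of statements (a)--(d) and their primed versions, as soon as the non-vanishing conditions are pinned down.

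For the non-vanishing criteria I would use the explicit product formula in Lemma \ref{L6.2}. That lemma shows that the bilinear form on a cell module factors as
\[
\Phi_1((S,s),(T,t)) \;=\; x^{\,l(P \vee P')}\,\phi^{\lambda}_{\delta_1}(s_{\lambda},t_{\lambda})\,\phi^{\mu}_{\delta_2}(s_{\mu},t_{\mu})
\]
whenever conditions (a) and (b) of Definition \ref{D4.7} hold, and vanishes otherwise. Consequently the Gram matrix $G^{\lambda,\mu}_{2s_1+s_2}$ decomposes, after grouping rows and columns according to the diagrammatic labels $(d,P)$, into a block structure whose rank is controlled by the Gram matrices of the cell modules of $\mathscr{A}[\mathbb{Z}_2 \wr \mathfrak{S}_{s_1}]$ indexed by the bipartition $(\lambda_1,\lambda_2)$ and of $\mathscr{A}[\mathfrak{S}_{s_2}]$ indexed by $\mu$, multiplied by the invertible scalar $x^{l(P \vee P')} \in \mathbb{K}(x)$.

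Next I would import the classical modular-representation results for the two factor algebras. For $\mathscr{A}[\mathfrak{S}_{s_2}]$, James's theorem gives that $\phi^{\mu}$ is non-degenerate in characteristic $p$ precisely when $\mu$ is a $p$-restricted partition; for $\mathscr{A}[\mathbb{Z}_2 \wr \mathfrak{S}_{s_1}]$, the cellular structure of Theorem \ref{T2.14} combined with the known theory for wreath products of $\mathbb{Z}_2$ with a symmetric group gives that $\phi^{(\lambda_1,\lambda_2)}$ is non-degenerate iff both $\lambda_1$ and $\lambda_2$ are $p$-restricted. Since $\phi^{\lambda,\mu}_{s_1,s_2}$ is non-degenerate iff both factor forms are, the $p$-restricted conditions in (a), (b), (c) follow at once. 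In characteristic zero every partition is $p$-restricted, so both factor forms are automatically non-degenerate, establishing the primed statements $(a)'$, $(b)'$, $(c)'$. The degenerate case $r=0$ is handled directly: $W[(0,(0,0),((\Phi,\Phi),\Phi))]$ is one-dimensional and the form is the identity, so $D^{(0,\Phi)}$ is non-zero over any field, proving (d). The same argument applied verbatim to $\widetilde{\Phi}_1$, $\widetilde{W}$, and $\widetilde{G}^{\lambda,\mu}_{2s_1+s_2}$ handles $\overrightarrow{A}_k^{\mathbb{Z}_2}$.

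The main obstacle is justifying the reduction of the rank of $G^{\lambda,\mu}_{2s_1+s_2}$ to the ranks of the factor Gram matrices. Concretely, one must check that the vanishing of $\Phi_1$ outside conditions (a) and (b) of Definition \ref{D4.7}, together with the $x^{l(P \vee P')}$-weighting, yields a block-diagonal (or at least rank-equivalent) decomposition of $G^{\lambda,\mu}_{2s_1+s_2}$ in which each non-zero block is $x^{l(P \vee P')}$ times a tensor product of Gram matrices arising in Theorem \ref{T2.14}. Once this compatibility is granted, the non-vanishing of $G^{\lambda,\mu}_{2s_1+s_2}$ is equivalent to the non-vanishing of both smaller Gram matrices, and the theorem follows from the cellular-algebra machinery above.
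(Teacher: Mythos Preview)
Your overall strategy coincides with the paper's: reduce the question of whether $\Phi_1$ (equivalently $\phi^{\lambda,\mu}_{s_1,s_2}$) is identically zero to the corresponding question for the factor forms on the cell modules of $\mathbb{Z}_2\wr\mathfrak{S}_{s_1}$ and $\mathfrak{S}_{s_2}$, then quote the known $p$-restricted criteria (\cite{RGA}, \cite{RG}) and Graham--Lehrer (Proposition~3.2 of \cite{GL}) for absolute irreducibility.

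Where you diverge is in the tactic used for that reduction. You propose to analyse the block structure of the full Gram matrix $G^{\lambda,\mu}_{2s_1+s_2}$ and relate its rank to a tensor product of the factor Gram matrices; you correctly flag this as the main obstacle, since the $(\delta_1,\delta_2)$ appearing in Lemma~\ref{L6.2} vary with the pair $((d,P),(d',P'))$ and the blocks are therefore \emph{twisted} versions of the factor Gram matrices, not literal tensor factors. The paper simply bypasses this difficulty. It never analyses ranks or blocks; it argues directly on individual form values.

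For the forward implication the paper takes $(S,s)$ and $(S',s')$ with the \emph{same} diagrammatic label $(d,P)$. Conditions (a),(b) of Definition~\ref{D4.7} are then automatic and $(\delta_1,\delta_2)$ is the identity, so
\[
\Phi_1((S,s),(S',s')) \;=\; x^{\,l(P\vee P)}\,\phi_1(s_\lambda,s'_\lambda)\,\phi_1(s_\mu,s'_\mu),
\]
the $\phi_1$'s being the \emph{untwisted} bilinear forms on the factor cell modules. Non-vanishing of both factor forms therefore gives $\Phi_1\not\equiv 0$ immediately. For the converse, if $\Phi_1((S,s),(T,t))\neq 0$ then $\phi^{\lambda}_{\delta_1}(s_\lambda,t_\lambda)\neq 0$ and $\phi^{\mu}_{\delta_2}(s_\mu,t_\mu)\neq 0$; the paper then invokes Proposition~2.4(ii) of \cite{GL}, which expresses each twisted value $\phi^{\lambda}_{\delta}$ as a linear combination of untwisted values $\phi_1$, forcing the plain factor forms to be non-zero. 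This two-line argument replaces your block-decomposition step entirely, so the obstacle you isolate never arises.

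One terminological point: you repeatedly write ``non-degenerate'' where ``non-zero'' is what is needed and what is true. In the Graham--Lehrer framework $D^{\lambda}\neq 0$ is equivalent to $\phi^{\lambda}\not\equiv 0$, not to non-degeneracy; and for the symmetric group the Specht-module form is non-zero (not non-degenerate) exactly for $p$-restricted partitions. In positive characteristic these forms are almost never non-degenerate, so the statement as you phrased it is false, though your intended meaning is clear.
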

 \begin{proof}

 We shall show that $\Phi_1((S, s), (T, t)) \neq 0$ for some $(S, s),  (T, t).$

 Consider $(S, s) = ((d, P), ((s_{\lambda_1}, s_{\lambda_2}), s_{\mu}))$ and $(S', s') = ((d, P), ((s'_{\lambda_1}, s'_{\lambda_2}), s'_{\mu}))$ then
 $$\Phi_1((S, s), (S', s')) = x^{l(P \vee P)} \phi_1(s_{\lambda}, s'_{\lambda}) \phi_1(s_{\mu}, s'_{\mu}),$$
where $ \lambda = (\lambda_1, \lambda_2), \phi_1(s_{\lambda}, s'_{\lambda})$ and $ \phi_1(s_{\mu}, s'_{\mu})$ are the bilinear forms of the cell module $W^{\lambda}$ and $W^{\mu}$ of the cellular algebras $k[\mathbb{Z}_2 \wr \mathfrak{S}_{s_1}]$ and $K[\mathfrak{S}_{s_2}]$ respectively.

 We know that $\phi_1(s_{\lambda}, s'_{\lambda}) \neq 0$ and $ \phi_1(s_{\mu}, s_{\mu}) \neq 0$ for some $s'_{\lambda}$ and $s'_{\mu}$ which implies that

 \centerline{$\Phi_1((S, s), (T, t)) \neq 0 $}

for some $(S, s) = \big((d, P), ((s_{\lambda_1}, s_{\lambda_2}), s_{\mu})\big), (T, t) = \big((d, Q), ((t_{\lambda_1}, t_{\lambda_2}), t_{\mu})\big).$

 Conversely, assume that $\Phi_1((S, s), (T, t)) \neq 0 \ \ \ \ \text{ for some } (S, s), (T, t).$
 $$i.e., \Phi_1((S, s), (T, t)) = x^{l(P \vee Q)} \phi^{\lambda}_{\delta_1}(s_{\lambda}, t_{\lambda}) \phi^{\mu}_{\delta_2}(s_{\mu}, t_{\mu}) \neq 0$$ which implies that
 \begin{equation}\label{E6.1}
 \phi^{\lambda}_{\delta_1}(s_{\lambda}, t_{\lambda}) \neq 0,  \phi^{\mu}_{\delta_2}(s_{\mu}, t_{\mu}) \neq 0
 \end{equation}
where $\phi^r_{s_1, s_2}((d, P), (d', Q)) = x^{l(P \vee Q)} (\delta_1, \delta_2)$ is as in Definition \ref{D4.7},

\NI $m^{\lambda}_{s_{\lambda}, s_{\lambda}} \delta_{1} m^{\lambda}_{t_{\lambda}, t_{\lambda}} \equiv \phi^{\lambda}_{\delta_1}(s_{\lambda}, t_{\lambda}) m^{\lambda}_{s_{\lambda}, t_{\lambda}} \ \ \text{mod } \mathscr{H} \left(< (\lambda_1, \lambda_2)\right)$ and

\NI $m^{\mu}_{s_{\mu}, s_{\mu}} \delta_2 m^{\mu}_{t_{\mu}, t_{\mu}} \equiv \phi^{\mu}_{\delta_2}(s_{\mu}, t_{\mu})m^{\mu}_{s_{\mu}, t_{\mu}} \ \ \text{mod } \mathscr{H}' \left( < \mu \right).$

\NI Also we know that by proof of (ii) of proposition 2.4 in \cite{GL},

 $$\phi^{\lambda}_{\delta_1}(s_{\lambda}, t_{\lambda}) = \sum r_{\delta^{\lambda}_1}(s'_{\lambda}, t_{\lambda}) \phi_1(s_{\lambda}, t_{\lambda}) \text{ and } \phi^{\mu}_{\delta_2}(s_{\mu}, t_{\mu}) = \sum r_{\delta^{\mu}_2}(s'_{\mu}, t_{\mu}) \phi_1(s_{\mu}, t_{\mu})$$

 By equation (\ref{E6.1}) we have,

 $\phi_1(s_{\lambda}, t_{\lambda}) \neq 0$ and $\phi_1(s_{\mu}, t_{\mu}) \neq 0$ for some $t_{\lambda}$ and $t_{\mu}.$

 Thus the proof of (a), (b), (c) follows from \cite{RGA} and (7.6) of \cite{RG} and the absolute irreducibility follows Proposition 3.2 of \cite{GL}.
 \end{proof}

\end{document}